\documentclass[11pt]{amsart}
\usepackage{amsmath, amssymb, amsthm, xcolor, mathtools}
\usepackage[margin=1in]{geometry}
\usepackage{hyperref}
\usepackage{comment}
\usepackage{float}
\usepackage{enumitem}

\newtheorem{theorem}{Theorem}[section]
\newtheorem{lemma}[theorem]{Lemma}
\newtheorem{corollary}[theorem]{Corollary}
\newtheorem{proposition}[theorem]{Proposition}

\theoremstyle{definition}
\newtheorem{definition}[theorem]{Definition}

\theoremstyle{remark}
\newtheorem{remark}[theorem]{Remark}

\newtheorem{claim}{Claim}

\numberwithin{equation}{section}

\makeatletter

\renewcommand\section{%
  \@startsection{section}{1}%
    {\z@}%
    {3.9ex\@plus1ex\@minus.2ex}%
    {2.7ex\@plus.2ex}%
    {\normalfont\Large\scshape\centering}%
    }

\makeatletter
\renewcommand\subsection{%
  \@startsection{subsection}{2}%
    {\z@}%
    {3.25ex plus 1ex minus .2ex}%
    {1.5ex plus .2ex}%
    {\normalfont\large\scshape\centering}%
    }
\makeatother

\renewcommand{\leq}{\leqslant}
\renewcommand{\geq}{\geqslant}
\renewcommand\d{\,\mathrm{d}}
\newcommand{\E}{\mathbf{E}}
\newcommand{\Z}{\mathbb{Z}}

\newcommand{\N}{\mathbb{N}}

\newcommand{\eps}{\varepsilon}

\let\oldpmod\pmod
\renewcommand{\pmod}[1]{\hspace{-0.12cm}\oldpmod {#1}}
\newcommand{\qsiegel}{\widetilde{q}}

\usepackage[normalem]{ulem}

\usepackage[makeroom]{cancel}

\title{Sum-product patterns in the shifted primes}
\author[Florian~K.~Richter]{Florian Richter}
\address{\'{E}cole Polytechnique F\'{e}d\'{e}rale de Lausanne (EPFL), Lausanne, Switzerland}
\email{f.richter@epfl.ch}

\author[Joni Ter\"{a}v\"{a}inen]{Joni Ter\"{a}v\"{a}inen}
\address{Department of Pure Mathematics and Mathematical Statistics, University of Cambridge, Cambridge, UK}
\email{joni.p.teravainen@gmail.com}

\begin{document}

\begin{abstract}
We show that the set $\mathbb{P}-1$ of shifted primes contains infinitely many sum-product patterns of the form $\{x,x+y,xy\}$ with $x,y$ arbitrarily large distinct integers. More strongly, we can also show that, for any $k\geq 1$, the set $\mathbb{P}-1$ contains longer patterns of the form $\{x,x+y,\ldots, x+ky,xy\}$ with $x,y$ arbitrarily large distinct integers, a statement that contains the Green--Tao theorem as a special case.
\end{abstract}

\maketitle

\tableofcontents

\section{Introduction}

Let $\mathbb{P}$ denote the set of prime numbers.
In this paper we consider the existence of various sum-product patterns in the shifted primes $\mathbb{P}-1=\{p-1: p\in\mathbb{P}\}$. One of our results states the following.

\begin{theorem}\label{thm:sumprod}
The set $\mathbb{P}-1$ contains infinitely many patterns of the form $\{x,x+y,xy\}$ with $x,y$ arbitrarily large distinct integers.     
\end{theorem}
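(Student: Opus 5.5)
We need $x+1$, $x+y+1$, $xy+1$ all to be prime, with $x,y$ large and distinct. The natural approach is to fix $x$ suitably and find $y$ by a two-linear-form prime result, then use a density/averaging argument over $x$.

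Set $p=x+1$ prime, so $x=p-1$. We then need $y$ such that both $y+p$ and $(p-1)y+1$ are prime. These are two linear forms in $y$: $L_1(y)=y+p$ and $L_2(y)=(p-1)y+1$. Finding a single $y$ with both prime is a twin-prime-type problem, which is open for a fixed $p$. So we must average over both $x$ and $y$, and count triples $(x,y)$ with $x,y\le N$ such that
\[
x+1,\quad x+y+1,\quad xy+1
\]
are all prime. The forms $x+1$ and $x+y+1$ are linear, but $xy+1$ is quadratic. The plan is to count these using a weighted average
\[
\sum_{x,y\le N}\Lambda(x+1)\Lambda(x+y+1)\Lambda(xy+1)
\]
and prove it is $\gg N^2$. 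The contribution from $x=y$ or from small $x,y$ is $O(N^{1+o(1)})$, so it is negligible.

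The key steps are as follows.

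\emph{Step 1: reduce to primes in progressions.} For each fixed $x$ with $p=x+1$ prime, the inner sum over $y$ is a correlation $\sum_y \Lambda(y+p)\Lambda((p-1)y+1)$. Substitute $m=xy+1$, which runs over the residue class $1 \pmod{x}$. Writing $m=1+xy$, the condition that $x+y+1$ is prime becomes: $m+x^2+x-1 \equiv 0\pmod{x}$ (automatic), with $(m-1)/x+x+1$ prime. So the problem is to count primes $m\equiv 1\pmod x$ with $(m-1)/x+x+1$ prime. This is equivalent to counting $n$ with $n$ and $xn-x^2-x+1$ both prime: again two linear forms, now with a modulus-dependent coefficient $x$.

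\emph{Step 2: average over the modulus.} Summing over $x\le N^{\theta}$ in the role of the modulus, while letting $y$ range up to $N$, turns the problem into a Bombieri--Vinogradov/Barban--Davenport--Halberstam-type average of a binary correlation. The plan is to take $x$ much smaller than $y$, say $x\le y^{\theta}$; this still satisfies the requirement that $x$ be large. Then apply the Hardy--Littlewood circle method to $\sum_y\Lambda(y+x+1)\Lambda(xy+1)$ on average over $x$ in a dyadic range.
- Major arcs: these give the expected singular series, which is positive when $x$ is even, since $x+1$ is prime.
- Minor arcs: the contribution is controlled in $L^2$ on average over $x$, via a large-sieve bound for $\sum_x|\sum_y\Lambda(xy+1)e(\alpha y)|^2$, in the spirit of Mikawa's result that binary problems hold for almost all shifts.

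\emph{Step 3: lower bound.} Show the resulting average is $\gg$ (number of $x$) $\times\, N/\log^2 N$, weighted by $\Lambda(x+1)$. This requires that the exceptional $x$, where the binary correlation fails, form a set of density $o(1/\log N)$ relative to the primes. That rate is just beyond what a plain exceptional-set bound gives, so a Siegel-zero issue arises and must be handled, e.g. by a Landau–Page argument: removing at most one exceptional modulus $\widetilde q$ and its multiples costs little.

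The main obstacle is Step 2's minor-arc/exceptional-set estimate. We need an almost-all-$x$ asymptotic for the binary correlation with linear coefficient $x$, strong enough to beat the $\log$-density of the primes $x+1$. The first thing to try is a dispersion/large-sieve bound, restricting to $x$ in a short power range $x\sim N^{\theta}$ with small $\theta$, where level-of-distribution estimates of Bombieri--Vinogradov strength suffice.
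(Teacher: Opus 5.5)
Your Step 2 is a genuine gap, and it contains the whole difficulty. You need a lower bound for
\[
\sum_{x\sim X}\Lambda(x+1)\sum_{y\le N}\Lambda(x+y+1)\Lambda(xy+1),
\]
which is a binary, twin-prime-type correlation in $y$ averaged over $x$. The proposal does not supply this, and the tools you name do not deliver it.

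\begin{itemize}
\item \emph{Mikawa-type results do not transfer.} They use that the averaging parameter is an additive shift of the \emph{same} function. Then $\sum_h|E(h)|^2$ is bounded by Parseval by $\int_{\mathfrak m}|S(\alpha)|^4\,d\alpha$, where $S(\alpha)=\sum_{n\le N}\Lambda(n)e(n\alpha)$. Here $x$ is a shift in $\Lambda(x+y+1)$ but a dilation in $\Lambda(xy+1)$, so no such orthogonality is available.

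\item \emph{A fixed-$\alpha$ large sieve only gives the trivial bound.} With $\hat f_x(\alpha)=\sum_y\Lambda(xy+1)e(\alpha y)$, the minor-arc term is $\int_{\mathfrak m}\overline{S(\alpha)}\sum_x\Lambda(x+1)e(\alpha(x+1))\hat f_x(\alpha)\,d\alpha$. Suppose you apply Cauchy--Schwarz in $x$ at fixed $\alpha$ and a large-sieve bound for $\sum_x|\hat f_x(\alpha)|^2$. That square function cannot beat its mean value $\asymp XN\log N$. You then integrate against $|S|$, whose $L^1$-norm over the minor arcs is $\gg N^{1/2}$. The result is only the trivial bound $\asymp XN$, up to logarithms.

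\item \emph{An $L^2(\mathfrak m)$ treatment is no easier.} It asks for the distribution of $s\mapsto\sum_{x+y=s}\Lambda(x+1)\Lambda(xy+1)$. That is, primes of the quadratic shape $x(s-x)+1$ with $x+1$ prime.

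\item \emph{Bombieri--Vinogradov and small $\theta$ do not help.} Bombieri--Vinogradov controls a single primality condition in progressions modulo $x$, not the joint one. Shrinking $\theta$ moves you \emph{towards} the fixed-$x$ problem, which is open, so small $\theta$ is the wrong direction for the correlation.
\end{itemize}

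The missing idea is how the paper avoids any binary problem: it eliminates the non-linear form \emph{first}. It parametrises $x=\Delta dm$ and $y=\Delta\mathcal D n/d$, summing over all $d\mid\mathcal D$. Then $xy=\Delta^2\mathcal D mn$ does not depend on $d$. Cauchy--Schwarz over $(m,n)$ removes $\Lambda(xy+1)$ and leaves $\sum_{m,n}|\sum_{d\mid\mathcal D}\Lambda(\Delta dm+1)\prod_jF_{t_j}(\cdots)|^2$.

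\begin{itemize}
\item Pairs $(d_1,d_2)$ outside a set of size $O(\tau(\mathcal D))$ give correlations of $\Lambda-\widetilde\Lambda_{W_2}$ along pairwise non-proportional linear forms in $(m,n)$. These are controlled by the generalised von Neumann theorem and by local Gowers-norm bounds on intervals $(X,X+H]$ with $H\geq X^{5/8+\varepsilon}$.
\item The diagonal costs only a factor $O(\tau(\mathcal D)^{-1})$. This is negligible because $\tau(\mathcal D)\geq(\log N)^{\delta\ell}$ exceeds all logarithmic losses.
\end{itemize}

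Only after the linear-form factors are replaced by periodic (type I) Siegel models is $\Lambda(xy+1)$ handled, by a Bombieri--Vinogradov estimate with moduli $m\le N^{3/4}$. This is why $x$ is taken of size about $N^{3/4}$ rather than a small power of $N$: large enough for short-interval Gowers uniformity, and below $N$ so that Bombieri--Vinogradov applies. Siegel zeros are handled through the Siegel model and Weil bounds, using $p_i\nmid Q$, not by discarding moduli.
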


We cannot hope to find patterns of the form $\{x,y,x+y,xy\}$, or even $\{x,y,xy\}$, in $\mathbb{P}-1$ with $x,y$ arbitrarily large without at the same time solving a major open problem about the Liouville function. Recall that the Liouville function is defined by $\lambda(n)=(-1)^{\Omega(n)}$, where $\Omega(n)$ is the number of prime factors of $n$, counted with multiplicities.

\begin{proposition}
If the set $\mathbb{P}-1$ contains patterns of the form $\{x,y,xy\}$ with $x,y$ arbitrarily large integers, then there are infinitely many primes $p$ with
\begin{align*}
 \lambda(p-1)=+1.   
\end{align*}
\end{proposition}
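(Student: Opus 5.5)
Suppose $x,y$ are large with $x, y, xy \in \mathbb{P}-1$, say $x=p-1$, $y=q-1$, $xy=r-1$ with $p,q,r$ prime. The plan is to exploit the complete multiplicativity of $\lambda$, which gives
\begin{align*}
\lambda(r-1)=\lambda(xy)=\lambda(x)\lambda(y)=\lambda(p-1)\lambda(q-1).
\end{align*}
We argue by contradiction and assume that only finitely many primes $p$ satisfy $\lambda(p-1)=+1$. Fix $N$ so large that every prime $p>N$ has $\lambda(p-1)=-1$.

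By hypothesis we can pick a pattern with $x,y>N$. Then $p=x+1>N$ and $q=y+1>N$, so $\lambda(p-1)=\lambda(q-1)=-1$. Also $r=xy+1>N$, so $\lambda(r-1)=-1$. The displayed identity, however, gives $\lambda(r-1)=(-1)(-1)=+1$, a contradiction. Hence infinitely many primes $p$ satisfy $\lambda(p-1)=+1$.

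There is no real obstacle here. The only points to check are that $\lambda$ is completely multiplicative, since $\Omega$ is additive over all factorizations, and that "arbitrarily large" lets us take both $x$ and $y$ beyond the threshold $N$. We may want the slightly stronger conclusion that infinitely many \emph{distinct} primes arise. That is already built into the argument, because each new threshold $N$ yields a prime above $N$ (one of $p,q,r$, all exceeding $N$) with the required property.
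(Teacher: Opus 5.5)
Your proof is correct and is essentially the paper's argument: complete multiplicativity gives $\lambda(xy)=\lambda(x)\lambda(y)$, so the three values cannot all be $-1$. Hence some prime exceeding any threshold $N$ satisfies $\lambda(p-1)=+1$. The paper states this directly rather than by contradiction, but the idea is identical.
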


\begin{proof}
If  $\{x,y,xy\}\subset \mathbb{P}-1$ with $x,y\geq N\geq 1$, then as $\lambda(xy)=\lambda(x)\lambda(y)$, at least one of $\lambda(x),\lambda(y),\lambda(xy)$ must be $+1$, giving a solution to $\lambda(p-1)=+1$ with $p\geq N+1$.   
\end{proof}

Theorem~\ref{thm:sumprod} is a special case of the following more general theorem that we establish.

\begin{theorem}[Main result]
\label{thm:linearforms}
Let $k\in \mathbb{N}$, $a_1,\ldots,a_k,b_1,\ldots,b_k\in\N$, and assume $a_ib_j-a_jb_i\neq 0$ whenever $i\neq j$. Let $L_1,\ldots, L_k\colon \mathbb{Z}^2\to \mathbb{Z}$ denote the linear forms
\[
L_j(x,y)=a_jx+b_jy,\quad j=1,\ldots,k.
\]
Then the set $\mathbb{P}-1$ contains infinitely many patterns of the form $\{x,xy,L_1(x,y),\ldots,L_k(x,y)\}$ with $x,y$ arbitrarily large distinct integers.     
\end{theorem}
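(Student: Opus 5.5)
We want $x$, $xy$, and $L_j(x,y)$ to all lie in $\mathbb{P}-1$. The idea is to fix $x$ to be a suitable shifted prime and then count $y$ with a transference/linear-equations machinery in the primes. Writing $x=p-1$, the conditions become:
\[
xy+1\in\mathbb{P},\qquad a_jx+b_jy+1\in\mathbb{P}\quad (j=1,\ldots,k).
\]
For fixed $x$, these are $k+1$ affine-linear forms in the single variable $y$, and a single-variable linear system of primes is out of reach (it is twin-prime type). So we let both $x$ and $y$ vary: $x$ ranges over a dense set of shifted primes in a range $[X,2X]$, and $y\in[1,N]$.

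The plan is to pass to the variables $(x,z)$ with $z=xy$. Then $x+1$, $z+1$ and $a_jx+b_jz/x+1$ must be prime. The forms $L_j$ are still not linear in $(x,z)$, so instead I would use the structure differently. I take $x=p-1$ with $p$ prime, restrict $y$ to a fixed residue class, and note that $(p-1)y+1$ as a function of $y$ is linear with modulus $p-1$. So I count pairs $(p,y)$ with $p\le X$ and $y\le N$, where $N$ is a large power of $X$, and seek primes among
\[
(p-1)y+1,\qquad a_j(p-1)+b_jy+1 .
\]
For each fixed $p$, the forms $y\mapsto (p-1)y+1$ and $y\mapsto b_jy+(a_j(p-1)+1)$ are $k+1$ affine forms in one variable. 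To make this a genuine finite-complexity system in two variables, I average over $p$ and also treat $u=y$ together with an auxiliary variable. Concretely, I would introduce $y=m\,\ell$ and reduce to the Green--Tao--Ziegler linear-equations theorem in the primes, applied with the pseudorandom majorant. The hypothesis $a_ib_j\ne a_jb_i$ is exactly the pairwise linear independence needed for finite complexity among the $L_j$. The form $xy+1$ is the extra one: with $x$ restricted to $p-1$, it is a linear form in $y$ with a large coefficient.

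\textbf{Key steps.}
\begin{enumerate}[label=(\roman*)]
\item Apply the $W$-trick, replacing $\Lambda$ by $\frac{\phi(W)}{W}\Lambda(Wn+b)$.
\item Prove a generalized von Neumann theorem for the weighted count
\[
\sum_{p\le X}\sum_{y\le N}\Lambda(xy+1)\prod_j\Lambda(L_j(x,y)+1),\qquad x=p-1,
\]
controlling it by Gowers $U^s$ norms in $y$ of the $\Lambda$'s on progressions of modulus $x$, averaged over $x$.
\item Use a Bombieri--Vinogradov-type equidistribution of $\Lambda$ in $U^s$ norm along progressions with moduli $x\le X=N^{c}$, averaged over moduli. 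This uses Gowers uniformity of $\Lambda-\Lambda_{\text{Siegel/Cramér}}$ in arithmetic progressions on average, in the spirit of recent results of Leng/Matomäki--Shao--Tao--Teräväinen.
\item Verify that the main-term local densities are positive, which follows from the pairwise independence of the forms and from $a_j,b_j\ge1$ avoiding fixed prime divisors after the $W$-trick.
\item Conclude that the count is $\gg XN/(\log X)^{O(1)}$. Since the contribution from $x=y$ is negligible, this gives distinct arbitrarily large solutions.
\end{enumerate}

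\textbf{Main obstacle.} The main obstacle is step (iii): uniformity of the primes in progressions to large moduli $x\approx N^{c}$, with $x$ itself restricted to shifted primes, needed simultaneously for the product form and the linear forms. A possible exceptional (Siegel) zero must be handled by a separate model $\Lambda_{\qsiegel}$ (the paper's \texttt{qsiegel} macro hints at this). I would try first to prove a $U^s$ Bombieri--Vinogradov theorem with level $N^{\theta}$ for some small $\theta>0$ via the Heath-Brown identity and type I/II estimates for nilsequences. The forms $L_j$ have bounded coefficients, so they only need ordinary Green--Tao--Ziegler input once the average over $x$ is taken.
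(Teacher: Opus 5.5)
\textbf{The gap is step (ii).} For each fixed $x$, the forms $y\mapsto xy+1$ and $y\mapsto b_jy+a_jx+1$ are affine forms in the single variable $y$. Their linear parts are pairwise proportional, so no generalized von Neumann theorem can bound $\sum_y\Lambda(xy+1)\prod_j\Lambda(b_jy+a_jx+1)$ by Gowers norms in $y$; already $\sum_y f(y)\overline{f(y)}$ rules this out. This is the twin-prime obstruction you identified yourself. Gowers uniformity of $y\mapsto\Lambda(xy+1)$ along progressions, as in your step (iii), does not remove it.

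Averaging over $x$ makes the $L_j$ a finite-complexity system in $(x,y)$, but $xy+1$ is not linear in $(x,y)$. A sum $\sum_{x,y}F(x,y)\prod_j g_j(L_j(x,y)+1)$ cannot be bounded by $\min_j\|g_j\|_{U^s}$ for a general weight $F$ (take $F=\prod_j\overline{g_j}$). Suppose you try to use the fact that $F$ depends only on $x$ and $xy$, by applying Cauchy--Schwarz in $z=xy$. You then get all pairs of factorizations $xy=x'y'$, parametrized as $x=uv$, $y=rs$, $x'=ur$, $y'=vs$. The resulting forms $a_juv+b_jrs+1$ are quadratic, so you are back to a nonlinear prime pattern. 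Your suggestion to introduce $y=m\ell$ and invoke Green--Tao--Ziegler does not say how the product form leaves the system, and that theorem cannot accept it.

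\textbf{The missing idea is to move a divisor between $x$ and $y$ while freezing their product.} The paper takes $x=\Delta dm$ and $y=\Delta\mathcal{D}n/d$, with $d$ running over the divisors of $\mathcal{D}=p_1^D\cdots p_\ell^D$. Then $xy+1=\Delta^2\mathcal{D}mn+1$ does not depend on $d$, while each $L_j(x,y)$ is linear in $(m,n)$ with $d$-dependent coefficients.
\begin{itemize}
\item Cauchy--Schwarz in $(m,n)$ removes $\Lambda(xy+1)$. Expanding the square over $(d_1,d_2)$ gives the forms $m$, $n$, $L_j(\Delta d_im,\Delta\mathcal{D}n/d_i)$. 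These are pairwise non-proportional for all but $O(k^2\tau(\mathcal{D}))$ of the $\tau(\mathcal{D})^2$ pairs, and since $\tau(\mathcal{D})\geq(\log N)^{\delta\ell}$ with $\ell$ large, the exceptional pairs are negligible.
\item The linear factors are then replaced by the Siegel model $\widetilde{\Lambda}_{W_2}$, using the short-interval Gowers bound of Theorem~\ref{lem_local_uniformity_norm_bound_for_siegel-corrected_vonMangoldt} on blocks of length $\asymp\mathcal{D}M$.
\item Once these factors are periodic type-I objects, $\Lambda(\Delta^2\mathcal{D}mn+1)$ needs only the ordinary Siegel-corrected Bombieri--Vinogradov estimate of Lemma~\ref{le:BV}, with moduli $\Delta^2\mathcal{D}qm$. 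No $U^s$ Bombieri--Vinogradov theorem is needed.
\end{itemize}

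The scales matter too. The paper chooses $M=N^{3/4}$ so that the blocks are longer than $X^{5/8+\varepsilon}$, which the all-intervals Gowers bound requires, while $M/N$ stays small enough for Bombieri--Vinogradov. Making $N$ a large power of the $x$-range, as you propose, would leave blocks too short for that input.

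Finally, positivity of the main term is not automatic when a Siegel zero exists. The Siegel-character contributions are removed by Weil bounds, which is why the primes dividing $\mathcal{D}$ are chosen coprime to the Siegel conductors.
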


If we choose $L_1(x,y)=x+y,\ldots, L_k(x,y)=x+ky$ in Theorem~\ref{thm:linearforms}, we obtain a corollary that contains both the Green--Tao theorem \cite{gt} and Theorem~\ref{thm:sumprod} as special cases.

\begin{corollary} For any $k\in \mathbb{N}$, the set $\mathbb{P}-1$ contains infinitely many patterns of the form $\{x,x+y,\ldots, x+ky,xy\}$ with $x,y$ arbitrarily large distinct integers.      
\end{corollary}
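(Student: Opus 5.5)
The corollary follows from Theorem~\ref{thm:linearforms} by a specific choice of linear forms, so the plan is to choose the forms and verify the non-degeneracy hypothesis. Take $a_j=1$ and $b_j=j$ for $j=1,\ldots,k$, so that
\[
L_j(x,y)=x+jy,\qquad j=1,\ldots,k.
\]
All coefficients lie in $\N$, as Theorem~\ref{thm:linearforms} requires.

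For $i\neq j$ we have $a_ib_j-a_jb_i=j-i\neq 0$, so the determinant condition holds. Theorem~\ref{thm:linearforms} then gives, for arbitrarily large $N$, distinct integers $x,y\geq N$ with
\[
\{x,\,xy,\,x+y,\ldots,x+ky\}\subset \mathbb{P}-1.
\]
This is exactly the pattern $\{x,x+y,\ldots,x+ky,xy\}$ in the statement. Because $x$ can be taken arbitrarily large, there are infinitely many such patterns.

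There is no real obstacle here; all the difficulty lies in Theorem~\ref{thm:linearforms}. The only point to check is that the $k$ forms are pairwise non-proportional, which holds because the $b_j$ are distinct. Note also that this pattern contains the progression $x,x+y,\ldots,x+ky$ of $k+1$ shifted primes, so adding $1$ throughout gives a $(k+1)$-term arithmetic progression of primes. This is why the corollary contains the Green--Tao theorem \cite{gt} as a special case. Taking $k=1$ recovers Theorem~\ref{thm:sumprod}.
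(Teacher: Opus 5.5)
Your proposal is correct and matches the paper, which obtains the corollary by taking $L_j(x,y)=x+jy$ in Theorem~\ref{thm:linearforms}. The only hypothesis to check is $a_ib_j-a_jb_i=j-i\neq 0$ for $i\neq j$, and you verify it.
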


\subsection{Main asymptotic result}

For the remainder of the paper, let us fix $k,a_1,\ldots,a_k,b_1,\ldots,b_k\in\N$ and $L_1,\ldots,L_k\colon\Z^2\to\Z$ as in the statement of Theorem~\ref{thm:linearforms}.
The following two quantities depend only on $k$ and the linear forms and are therefore also fixed throughout the paper.

Throughout, let 
$$0<\delta\leq1/100$$
be sufficiently small in terms of $k$.
\begin{definition}[Definition of $\ell$ and $\Delta$]
\label{def_Delta}
Let
\begin{equation}
\label{def_ck_and_ell}
\ell=\lceil 30k/\delta\rceil,
\end{equation}
and let $\Delta= B!$, where $B$ is the maximum of 
$$\{k+3\}\cup\{|a_i|\colon 1\leq i\leq k\}\cup \{|b_i|\colon 1\leq i\leq k\}\cup \{|a_ib_j-a_jb_i|\colon 1\leq i<j\leq k\}.
$$ 
\end{definition} 

For notational convenience, define
\begin{align*}
\psi_{-1}(x)&=\Delta x+1;\\
\psi_0(x)&=\Delta^2x+1;\\
\psi_{j}(x,y)&= \Delta a_j x +\Delta b_jy+1=L_j(\Delta x,\Delta y)+1,\qquad 1\leq j\leq k.
\end{align*}
We will deduce Theorem~\ref{thm:linearforms} from the following asymptotic estimate.

\begin{theorem}[Main asymptotic result]
\label{thm_main_asymptotic}
For all sufficiently large $N$ there exist primes $p_1,\ldots,p_\ell \ll (\log N)(\log\log N)$ such that if
\begin{align}
\label{eq:w1choice}
W_1&=\prod_{p\leq w_1}p,\qquad \textnormal{where}~~w_1=\exp((\log N)^{1/4}),
\\
\label{eq:Mchoice}
M&=\lfloor N^{3/4}\rfloor,
\\
\label{eq:Dbound}
\mathcal{D}&=p_1^{D}\cdots p_{\ell}^{D},\qquad \textnormal{where}~~D=\lfloor (\log N)^{\delta}\rfloor,
\end{align}
then
\begin{align}
\label{eqn_main_asymptotic}
\sum_{d\mid \mathcal{D}}\sum_{\substack{m\leq M\\ (m,W_1)=1}}\sum_{n\leq N}\Lambda(\psi_{-1}(dm))\Lambda(\psi_0(\mathcal{D}mn))\prod_{j=1}^k \Lambda\big(\psi_j\big(dm,\tfrac{\mathcal{D}}{d}n\big)\big)\gg \frac{\tau(\mathcal{D})\varphi(W_1)M N}{W_1}.
\end{align}
\end{theorem}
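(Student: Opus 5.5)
The strategy is to treat the $m$-variable as a small "outer" variable and to handle the remaining pattern in $n$ by the Green--Tao--Ziegler machinery for linear equations in primes, uniformly in $m$ and $d$. For fixed $d\mid\mathcal{D}$ and $m\leq M$ with $(m,W_1)=1$, the inner sum over $n\leq N$ is
\[
\Lambda(\Delta dm+1)\sum_{n\leq N}\Lambda(\Delta^2\mathcal{D}mn+1)\prod_{j=1}^k\Lambda\big(\Delta a_jdm+\Delta b_j\tfrac{\mathcal{D}}{d}n+1\big).
\]
This is a system of $k+1$ affine-linear forms in the single variable $n$. The forms are pairwise non-proportional: the $\psi_j$ have coefficient ratios depending on $a_j$ through their constant terms, and $a_ib_j-a_jb_i\neq0$ keeps them distinct. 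So a singular-series heuristic applies, but only with coefficients of size up to $N^{3/4}\mathcal{D}$, which is far too large for a uniform Green--Tao theorem.

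I would therefore reverse the roles: sum over $m$ first (range $M=N^{3/4}$) with $n$ fixed, or better, treat $(m,n)$ jointly as a two-dimensional linear system. The product $mn$ is the obstruction, since $\psi_0$ is not linear. To handle it I would use the $W$-trick on $m$ together with a dispersion/Cauchy--Schwarz argument: fix $m$ and view the forms as linear in $n$ with coefficients $\Delta^2\mathcal{D}m$ and $\Delta b_j\mathcal{D}/d$. Concretely:
\begin{itemize}[leftmargin=*]
\item First replace each $\Lambda$ by a Cram\'er/Heath-Brown type model $\Lambda^\sharp$ (a truncated divisor sum or $W$-tricked sieve weight) whose correlations can be computed directly, with the main term in the form of the singular series. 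Compute that the model sum equals a positive constant times $\tau(\mathcal{D})\varphi(W_1)MN/W_1$. Here the $\Delta$'s and the choice of $p_i$ ensure the local factors at small primes and at the $p_i$ do not vanish.
\item Bound the error $\Lambda-\Lambda^\sharp$ by a Gowers-norm estimate averaged over $m$. By the generalized von Neumann theorem with a pseudorandom majorant, it suffices to show $\E_{m}\|\Lambda-\Lambda^\sharp\|_{U^{s}}$ is small along progressions of modulus $\asymp m\mathcal{D}$. This is where the inverse theorem and nilsequence equidistribution in progressions (Matomäki--Radziwiłł--Tao style results on short intervals/arithmetic progressions on average over the modulus) enter.
\end{itemize}

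The role of the average over $d\mid\mathcal{D}$ with $\tau(\mathcal{D})=(D+1)^\ell$ many terms, and of $\ell\approx 30k/\delta$, is to supply a Turán--Kubilius/entropy-decrement style averaging. The decomposition $dm$ versus $\tfrac{\mathcal{D}}{d}n$ lets one shift prime factors between the two variables. A pigeonhole over the $\tau(\mathcal{D})$ divisors finds scales at which the correlation of $\Lambda(\psi_{-1}(dm))$ with the linear-in-$n$ part is controlled by its expected value, in the spirit of Tao's logarithmic Chowla argument. The primes $p_i\ll\log N\log\log N$ are chosen to avoid Siegel-type exceptional moduli, so that prime-in-progression estimates hold for moduli involving them.

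The main obstacle is the second bullet: obtaining Gowers-uniformity of $\Lambda$ in progressions $n\mapsto \Delta^2\mathcal{D}mn+1$ with modulus as large as $N^{3/4}$, on average over $m$, with positive rather than merely $o(1)$-relative control. I would first try reducing, via the Cauchy--Schwarz and van der Corput steps, to type I/II sums over $m$ and invoking Bombieri--Vinogradov-type bounds for nilsequences at level $N^{1/2+\eps}$ relative to the $MN$ product range. If that fails, the fallback is the divisor-averaging over $\mathcal{D}$ to trade modulus size for a density increment.
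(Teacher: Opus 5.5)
Your proposal has a genuine gap: it never explains how the non-linear factor $\Lambda(\psi_0(\mathcal{D}mn))=\Lambda(\Delta^2\mathcal{D}mn+1)$ is handled, and the tools you list cannot handle it. The generalized von Neumann theorem needs a finite-complexity system of affine-linear forms in $(m,n)$, and $\Delta^2\mathcal{D}mn+1$ is not linear. Fixing $m$ (or $n$) does not help. You are then left with $k+1$ (or $k+2$) affine forms in a \emph{single} variable. That is a Dickson/twin-prime type system of infinite complexity, on which no Gowers norm gives control, whatever the size of the coefficients. The non-proportionality you claim is not the relevant notion, since in one variable all linear parts are proportional. So Gowers uniformity of $\Lambda$ along $n\mapsto\Delta^2\mathcal{D}mn+1$ on average over $m$, your stated main obstacle, would not feed into any von Neumann argument, and it turns out not to be needed. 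Likewise, the average over $d\mid\mathcal{D}$ plays no entropy-decrement, pigeonhole or density-increment role.

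The missing idea is a Cauchy--Schwarz in $(m,n)$ with the sum over $d$ kept \emph{inside} the square. First replace $\Lambda$ by $\widetilde{\Lambda}_{W_2}+(\Lambda-\widetilde{\Lambda}_{W_2})$, but only at $\psi_1,\ldots,\psi_k$. Then bound $\sum_{m,n}\Lambda(\psi_0(\mathcal{D}mn))^2\ll MN(\log N)^2$ trivially, and expand $|\sum_{d\mid\mathcal{D}}\Lambda(\psi_{-1}(dm))\prod_j F_{t_j}(\psi_j(dm,\tfrac{\mathcal{D}}{d}n))|^2$ over pairs $(d_1,d_2)$.
\begin{itemize}
\item For all but $O(k^2\tau(\mathcal{D}))$ pairs, the forms $m$, $n$, $L_j(\Delta d_im,\Delta\mathcal{D}n/d_i)$ are pairwise non-proportional, giving a genuine two-dimensional system. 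Such a pair is controlled by a $U^{2k+3}$ norm of $\Lambda-\widetilde{\Lambda}_{W_2}$ on intervals of length $\asymp\mathcal{D}M$; these are short intervals, since $M=N^{3/4}$ forces cutting $n$ into blocks of length $M$.
\item The exceptional pairs are negligible exactly because $\tau(\mathcal{D})=(D+1)^\ell\geq(\log N)^{\delta\ell}$ with $\delta\ell>22k+4$. That is what $\mathcal{D}$, $D$ and $\ell$ are for.
\item Only afterwards is $\psi_0$ treated. The other factors are now models expandable into progressions, so one needs only equidistribution of $\Lambda$ in progressions to moduli $\asymp\Delta^2\mathcal{D}qm$, on average over $m\leq M=(MN)^{3/7}$. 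This is a Bombieri--Vinogradov estimate, not Gowers uniformity.
\item $\Lambda(\psi_{-1}(dm))$ is never modelled. The final lower bound comes from positivity, from $\mathfrak{S}(m,d)\gg1$ for $\textnormal{rad}(\mathcal{D})\mid d$, and from primes $\equiv1\pmod{\Delta d}$.
\end{itemize}

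Finally, an uncorrected Cram\'er/Heath-Brown model is quantitatively inadequate. Embedding in $\mathbb{Z}/p\mathbb{Z}$ with $p\asymp\mathcal{D}M$ costs $\mathcal{D}^2=\exp((\log N)^{\delta+o(1)})$ against the $M^2$ terms, so the Gowers norm must beat $\mathcal{D}^{-2}$. This forces $\log w\geq(\log N)^{\delta+o(1)}$, a level at which Siegel zeros cannot be excluded. Hence one needs the Siegel model $\widetilde{\Lambda}_W$, whose twists are then shown negligible by Weil bounds. That step, not primes-in-progressions estimates, is where the choice $p_i\nmid Q$ is used: it ensures $p\nmid d$ for $p\mid Q$.
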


\begin{remark}
The implied constant in the notation $\gg$ in \eqref{eqn_main_asymptotic} may depend on $k$ and on the coefficients $a_1,\ldots,a_k,b_1,\ldots,b_k$ of the linear forms, but is otherwise universal. Since we think of these parameters as fixed, we suppress this dependence from the notation. We keep this convention throughout the rest of the paper, and unless explicitly stated otherwise, all implied constants below may depend on $k$ and $a_1,\ldots,a_k,b_1,\ldots,b_k$.
\end{remark}

The upper bound sieve \cite[Fundamental Lemma~6.3]{iw-kow} gives
\[
\sum_{d\mid \mathcal{D}}\sum_{\substack{m\leq M\\ (m,W_1)=1}} \sum_{n\leq N} 1 \ll \frac{\tau(\mathcal{D})\varphi(W_1)M N}{W_1},
\]
and by crudely bounding the contribution of higher prime powers on the left-hand side of \eqref{eqn_main_asymptotic} by $\ll N^{3/2+o(1)}$, we see that Theorem~\ref{thm_main_asymptotic} provides a lower density estimate on the number of triples $(d,n,m)$ for which
\[
1_{\mathbb{P}}(\Delta dm+1)1_{\mathbb{P}}(\Delta^2\mathcal{D}mn+1)\prod_{j=1}^k 1_{\mathbb{P}}\big(L_j\big(\Delta dm,\tfrac{\Delta\mathcal{D}}{d}n\big)+1\big)>0.
\]
Taking $x=\Delta dm$ and $y=\frac{\Delta \mathcal{D}n}{d}$,
it follows that $\mathbb{P}-1$ contains $\{x,xy,L_1(x,y),\ldots,L_k(x,y)\}$ with $x,y$ arbitrarily large distinct integers.
This shows that Theorem~\ref{thm_main_asymptotic} implies
Theorem~\ref{thm:linearforms}.

\subsection{On the choice of $p_1,\ldots,p_\ell$}

The choice of $p_1,\ldots,p_\ell$ in the statement of Theorem~\ref{thm_main_asymptotic} is somewhat delicate and depends on whether Siegel zeros occur at certain levels associated to the scale $N$.

\begin{definition}[Siegel zero]
Let $\delta_0=1/1000$. Given $w\geq 2$, we say that a real number $\widetilde{\beta}\in(0,1)$ is a \emph{Siegel zero of level $w$} if there is a primitive Dirichlet character $\widetilde{\chi}\pmod{\qsiegel}$ with $\qsiegel\leq w$ for which
\[
L(\widetilde{\beta},\widetilde{\chi})=0
\quad\text{and}\quad
\widetilde{\beta}\ge 1-\frac{\delta_0}{\log w}.
\]
In this situation, the character $\widetilde{\chi}$ is called a \emph{Siegel character of level $w$}.
\end{definition}

From the Landau--Page theorem (see for example~\cite{pintz}), we know that if a Siegel zero of level $w$ exists, then it is unique and simple, and the corresponding Siegel character $\widetilde{\chi}$ is a unique non-principal real character. 
Moreover, we note for later use that the conductor $\qsiegel$ has squarefree odd part and $2$-part equal to either $1$, $4$, or $8$ (since there are no primitive real characters modulo $2^j$ for $j\geq 4$), and satisfies the lower bound
\begin{align}\label{eq:Siegelbound}
(\log w)^A\ll_A \qsiegel.
\end{align}

\begin{definition}[Definition of $\widetilde{\beta}_i$, $\widetilde{\chi}_i$, and $\qsiegel_i$]
\label{def_siegel_paratemers}
Let $w_1$ and $W_1$ be as in \eqref{eq:w1choice} and define
\begin{equation}
\label{eq:w2choice}
w_2=\exp(\frac{1}{2}(\log N)^{1/10}),
\qquad\text{and}\qquad
W_2=\prod_{p\leq w_2}p.
\end{equation}
For the remainder of the paper, for $i\in\{1,2\}$, let $\widetilde{\beta}_i$ and $\widetilde{\chi}_i \pmod{\qsiegel_i}$ denote the Siegel zero and the corresponding Siegel character of level $w_i$, respectively, when they exist. If no such zero exists, we set
\[
\widetilde{\beta}_i=1,\qquad \widetilde{\chi}_i\equiv 0,\qquad\text{and}\qquad \qsiegel_i=1.
\]
\end{definition}
Define
\begin{equation}
\label{def:Qtilde}
Q=\prod_{p\mid \operatorname{lcm}(\qsiegel_1,\qsiegel_2)}p
\end{equation}
and note that 
\begin{equation}
\label{eqn:Qbound}
Q\leq w_1w_2\leq \exp(2(\log N)^{1/4})
\end{equation}

We now have all the components needed to specify our choice of $p_1,\ldots,p_\ell$ at scale $N$.

\begin{definition}[Definition of $p_1,\ldots,p_\ell$]
\label{def_choice_of_primes}
Let $p_1<p_2<\ldots< p_\ell$ be the smallest $\ell$ primes that don't divide $Q$.
\end{definition}

Note that if there are no Siegel zeros at levels $w_1$ and $w_2$ then $Q=1$ and one can simply take $p_1,\ldots,p_\ell$ to be the smallest $\ell$ primes.
But even if not, $Q$ has at most $\ll \log N$ prime factors due to \eqref{def:Qtilde}, which implies that $p_{\ell}\ll (\log N)(\log \log N)$ as required in the hypothesis of Theorem~\ref{thm_main_asymptotic}. This also means that
\begin{align}
\label{eqn_bound_for_cD}
\mathcal{D}=\exp((\log N)^{\delta+o(1)}),    
\end{align}
which is an estimate that we will use several times throughout the proof.

\subsection*{Acknowledgments}
FKR was supported by the Swiss National Science Foundation grant TMSGI2-211214. JT was supported by funding from the European Union's Horizon Europe research and innovation programme under ERC grant agreement no. 101162746.

\subsection*{AI disclosure statement}
The authors used AI tools solely for language editing to improve the wording and readability of the manuscript. All mathematical ideas and proofs were developed by the authors, who assume responsibility for all content.

\section{Outline of the proof}

The proof of Theorem~\ref{thm_main_asymptotic} naturally splits into 4 steps, which we outline now.

\subsection{Step 1: Inner Siegel model approximation using Gowers norm estimates}

Given a positive integer $R$, define
\[
\Lambda_{R}(n)=\prod_{p\mid R}\Lambda_p(n),
\]
with the local von Mangoldt function $\Lambda_p$ defined as 
\begin{align}
\label{eq:lambdap}
\Lambda_p(n)\coloneqq \frac{p}{p-1}1_{n\not \equiv 0\pmod p}.    
\end{align}

We now introduce the Siegel model, which will play a central role in our proof.

\begin{definition}[The Siegel model]\label{def:Siegelmodel}
Given $w\in[2,\infty)$ and $W=\prod_{p\leq w}p$, we define the \emph{Siegel model at level $w$} as
\[
\widetilde{\Lambda}_{W}(n)
\coloneqq
\Lambda_{W}(n)\bigl(1-\widetilde{\chi}(n)n^{\widetilde{\beta}-1}\bigr),
\qquad n\in\N,
\]
where, if a Siegel zero exists at level $w$, $\widetilde{\beta}$ denotes this zero and $\widetilde{\chi}$ the corresponding Siegel character, and if no Siegel zero exists at that level, we take $\widetilde{\beta}=1$ and $\widetilde{\chi}\equiv 0$.
\end{definition}

The first step in the proof of Theorem~\ref{thm_main_asymptotic} is to replace the factors of $\Lambda$ involving the affine-linear forms $\psi_1,\ldots,\psi_k$ by a Siegel model. For the moment, we leave the two remaining factors involving $\psi_{-1}$ and $\psi_0$ unchanged, as these will be handled in subsequent steps. In this first replacement, we specifically use the Siegel model at level $w_2$, namely
\[
\widetilde{\Lambda}_{W_2}(n)
=
\Lambda_{W_2}(n)\bigl(1-\widetilde{\chi}_2(n)n^{\widetilde{\beta}_2-1}\bigr),
\]
where $w_2$, $\widetilde{\beta}_2$, and $\widetilde{\chi}_2$ are as in Definition~\ref{def_siegel_paratemers}.
Thus, we define
\[
S=\sum_{d\mid \mathcal{D}}\sum_{\substack{m\leq M\\ (m,W_1)=1}}\sum_{n\leq N}\Lambda(\psi_{-1}(dm))\Lambda(\psi_0(\mathcal{D}mn))\prod_{j=1}^k \Lambda\big(\psi_j\big(dm,\tfrac{\mathcal{D}}{d}n\big)\big)
\]
and
\[
S_{\mathrm{main}}^{(1)}=
\sum_{d\mid \mathcal{D}}\sum_{\substack{m\leq M\\ (m,W_1)=1}}\sum_{n\leq N}\Lambda(\psi_{-1}(dm))\Lambda(\psi_0(\mathcal{D}mn))\prod_{j=1}^k \widetilde{\Lambda}_{W_2}\big(\psi_j\big(dm,\tfrac{\mathcal{D}}{d}n\big)\big).
\]

\begin{proposition}[First main-term estimate]\label{prop:main}
We have
\begin{align*}
|S-S_{\mathrm{main}}^{(1)}| \ll \frac{\tau(\mathcal{D})MN}{(\log N)^{10k}}.   
\end{align*}    
\end{proposition}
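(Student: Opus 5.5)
We telescope in the $k$ factors involving $\psi_1,\ldots,\psi_k$. Write $\Lambda=\widetilde{\Lambda}_{W_2}+(\Lambda-\widetilde{\Lambda}_{W_2})$ in each such factor. The difference $S-S^{(1)}_{\mathrm{main}}$ then becomes a sum of at most $2^k-1$ terms. In each term at least one factor $\Lambda(\psi_j(\cdot))$ is replaced by $E(\psi_j(\cdot))$, where $E:=\Lambda-\widetilde{\Lambda}_{W_2}$, and the remaining factors are either $\Lambda$ or $\widetilde{\Lambda}_{W_2}$.

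We fix $d\mid\mathcal{D}$ and $m\le M$ with $(m,W_1)=1$, and set $x_0=\Delta dm$. The factor $\Lambda(\psi_{-1}(dm))$ does not depend on $n$, so it can be pulled out. Bounding it trivially by $\log N$ and using the Brun–Titchmarsh estimate, the sum over $m$ with $\Delta dm+1$ prime carries weight $\ll M$ times a $\tau$-type loss, which is acceptable. For fixed $(d,m)$, the inner $n$-sum is then a sum over $n\le N$ of
\[
\Lambda(\Delta^2\mathcal{D}mn+1)\prod_{j=1}^k f_j\big(a_jx_0+b_j\Delta\tfrac{\mathcal{D}}{d}n+1\big).
\]
This is a linear-forms average in the single variable $n$: there are $k+1$ affine forms in $n$, with coefficients of size up to $N^{3/4+o(1)}$. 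These forms are pairwise non-proportional in the relevant sense, because the constant terms differ in the right way. Indeed, $a_ib_j-a_jb_i\ne0$ gives distinct ratios of constant term to slope once $x_0\neq 0$, and the form $\Delta^2\mathcal{D}mn+1$ has a different ratio again.

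The cleaner route is to treat the sum as a two-dimensional sum in $(m,n)$ for each fixed $d$, using the generalized von Neumann theorem (Green–Tao). With $x=dm$ and $y=(\mathcal{D}/d)n$, the forms $\psi_j$ are affine-linear of finite complexity in $(m,n)$. The form $\mathcal{D}mn$, however, is not linear, so we pass to $n$-only averaging. For fixed $m$, the form $n\mapsto \Delta^2\mathcal{D}mn+1$ is linear in $n$, and the forms $\psi_j$ are linear in $n$ with slope $\Delta b_j\mathcal{D}/d$. I would therefore apply the one-variable generalized von Neumann inequality for a system of linear forms in $n$ together with a dummy variable. Concretely, we average over a further shift parameter, using the fact that $m$ ranges over an interval of length $M$, which provides the second direction. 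The goal is to bound each error term by
\[
\ll (\log N)^{O(1)}\,\|E\|_{U^{s}[N']}\,\tau(\mathcal{D})MN+\text{(small)},
\]
using a pseudorandom majorant for $\Lambda$ and $\widetilde\Lambda_{W_2}$. For this we need a majorant valid for the $W$-tricked $\Lambda$ along progressions of modulus $\asymp\Delta\mathcal{D}/d\le\exp((\log N)^{\delta+o(1)})$, as given by \eqref{eqn_bound_for_cD}.

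The key input is then a Gowers uniformity estimate of the form
\[
\big\|\Lambda-\widetilde{\Lambda}_{W_2}\big\|_{U^s}\ll(\log N)^{-A}
\]
on progressions of modulus up to $\exp((\log N)^{1/10})$. This is the Siegel-model quantitative Gowers-norm result: the Siegel model at level $w_2$ captures the major-arc behaviour, including a possible exceptional zero, and Leng–Sah–Sawhney style quasipolynomial inverse theorems give logarithmic savings. Taking $A$ large compared with $10k$ plus the $(\log N)^{O(1)}$ losses from the majorant and from the crude bounds on the other factors yields the claimed $\tau(\mathcal{D})MN/(\log N)^{10k}$.

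I expect the main obstacle to be the pseudorandom majorant and linear forms condition with uniformity in the moduli $\Delta\mathcal{D}/d$ and $\Delta^2\mathcal{D}m$, since these are as large as $N^{3/4}$ for the product form. To handle this I would first handle the product factor by Cauchy–Schwarz, or by absorbing it into the $m$-direction. Alternatively, I would treat $\Lambda(\psi_0)$ as a bounded-by-majorant weight whose $n$-progression structure has modulus $\Delta^2\mathcal{D}m$, and then use a von Neumann argument in which only the $\psi_j$ factors need to be controlled. Here the condition $(m,W_1)=1$ ensures that the relevant local factors are well behaved.
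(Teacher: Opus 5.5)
\textbf{There is a genuine gap: the proposal never removes the non-linear factor $\Lambda(\psi_0(\mathcal{D}mn))=\Lambda(\Delta^2\mathcal{D}mn+1)$ without also destroying the cancellation in $E=\Lambda-\widetilde{\Lambda}_{W_2}$.}

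Your first route fixes $(d,m)$ and takes absolute values in $m$. This leaves a one-variable sum $\sum_{n\le N}\Lambda(\Delta^2\mathcal{D}mn+1)\prod_j f_j(\cdots)$ of affine forms in $n$ alone, and Gowers norms do not control such sums. The generalized von Neumann theorem needs the \emph{homogeneous} parts of the forms to be pairwise non-proportional. In a single variable they are always proportional. Your observation that the ``ratios of constant term to slope'' differ only says that the forms have distinct roots; that is the condition relevant to the singular series, not to Gowers-norm control. Already for $k=1$, bounding $\sum_n \Lambda(\Delta^2\mathcal{D}mn+1)E(\Delta b_1\tfrac{\mathcal{D}}{d}n+\Delta a_1dm+1)$ is a Hardy--Littlewood (twin-prime type) problem.

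Your second route uses $m$ as the second direction, but $\psi_0$ depends on $mn$. Cauchy--Schwarz or a majorant bound on $\Lambda(\psi_0)$ for a fixed $d$ leaves $\sum_{m,n}|\prod_j F_{t_j}|^2$, in which no cancellation survives. The pseudorandom majorant is also not the real obstacle. The paper just loses $\|\cdot\|_\infty\ll\log N$ per factor, and these losses are absorbed by a saving of $\exp(-(\log N)^{2\delta})$ in the Gowers norm.

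The missing idea is to exploit the sum over $d\mid\mathcal{D}$, which you fix from the start.

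\begin{enumerate}
\item Since $\psi_0(\mathcal{D}mn)$ does not depend on $d$, the paper applies Cauchy--Schwarz in $(m,n)$ with the $d$-sum kept \emph{inside} the square. This removes $\Lambda(\psi_0)$ at a cost of $MN(\log N)^2$.
\item Expanding the square gives pairs $(d_1,d_2)$. Each pair carries the system of linear forms $m$, $n$, $L_j(\Delta d_1m,\Delta\mathcal{D}n/d_1)$, $L_j(\Delta d_2m,\Delta\mathcal{D}n/d_2)$ in two genuine variables. The factor $\Lambda(\psi_{-1}(d_1m))\Lambda(\psi_{-1}(d_2m))$ is simply a function of the form $m$, so Brun--Titchmarsh is not needed.
\item These forms are pairwise non-proportional unless $a_jb_{j'}d_1^2=a_{j'}b_jd_2^2$. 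That happens for at most $k^2\tau(\mathcal{D})$ of the $\tau(\mathcal{D})^2$ pairs.
\item The exceptional pairs are negligible precisely because $\tau(\mathcal{D})=(D+1)^\ell\ge(\log N)^{\delta\ell}$ with $\delta\ell>22k+4$. This is why $\mathcal{D}$ is built with so many divisors, and why the pattern uses $x=\Delta dm$, $y=\Delta\mathcal{D}n/d$, so that $xy$ is independent of $d$.
\item For the good pairs, $n$ is cut into blocks of length $M$. Without this, embedding the box $[M]\times[N]$ into $\mathbb{Z}/p\mathbb{Z}$ would lose a factor $N/M$. Then Proposition~\ref{prop_GvN} applies.
\item What is needed in the end is the \emph{short-interval} bound of Theorem~\ref{lem_local_uniformity_norm_bound_for_siegel-corrected_vonMangoldt}, on intervals of length $\asymp\mathcal{D}M\ge X^{11/16}$. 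A bound along progressions of moderate modulus over the full range is not what is required.
\end{enumerate}
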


The main input in the proof of Proposition~\ref{prop:main} is a local Gowers norm estimate for the Siegel-corrected von Mangoldt function $\Lambda -\widetilde{\Lambda}_{W_2}$.
We expand each factor
$\Lambda=\widetilde{\Lambda}_{W_2}
+(\Lambda-\widetilde{\Lambda}_{W_2})$ in $S$. For every binary vector $\mathbf{t}=(t_1,\ldots,t_k)\in \{0,1\}^k$, consider the sum
\[
S_{\mathrm{G},\mathbf{t}}= \sum_{d\mid \mathcal{D}}\sum_{\substack{m\leq M\\ (m,W_1)=1}}\sum_{n\leq N}\Lambda(\psi_{-1}(dm))\Lambda(\psi_0(\mathcal{D}mn))
\prod_{j=1}^k F_{t_j}\big(\psi_j\big(dm,\tfrac{\mathcal{D}}{d}n\big)\big),
\]
where
\[
F_{0}(n)=\widetilde{\Lambda}_{W_2}(n)\qquad\text{and}\qquad
F_{1}(n)=\big(\Lambda-\widetilde{\Lambda}_{W_2}\big)(n).
\]
Letting $\mathbf{0}=(0,\ldots,0)$, it is easy to see that 
\[
S_{\mathrm{main}}^{(1)}=S_{\mathrm{G},\mathbf{0}}\qquad\text{and}\qquad S= \sum_{ \mathbf{t}\in\{0,1\}^k} S_{\mathrm{G},\mathbf{t}}.
\]
Therefore, Proposition~\ref{prop:main} immediately follows from the following.

\begin{proposition}[Error-term estimate -- Gowers norm bound]
\label{prop_error_estimate_gowers_norm_bound}
For $\mathbf{0}\neq \mathbf{t}\in\{0,1\}^k$ we have
\begin{align}
\label{error:12-12-2}
|S_{\mathrm{G},\mathbf{t}}|
&\ll  \frac{\tau(\mathcal{D})MN}{(\log N)^{10k}}.
\end{align}
\end{proposition}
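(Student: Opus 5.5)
Fix $\mathbf{t}\neq\mathbf{0}$ and an index $j_0$ with $t_{j_0}=1$. The plan is to fix $d\mid\mathcal{D}$ and $m$, treat the sum over $n\leq N$ as a linear-forms sum in the single variable $n$, and bound it by a Gowers norm of $\Lambda-\widetilde{\Lambda}_{W_2}$. For fixed $d,m$, each $\psi_j(dm,\tfrac{\mathcal{D}}{d}n)=\Delta b_j\tfrac{\mathcal{D}}{d}n+(\Delta a_j dm+1)$ is affine in $n$ with common difference $q_j=\Delta b_j\mathcal{D}/d$. The factor $\Lambda(\psi_0(\mathcal{D}mn))$ is also affine in $n$, with difference $\Delta^2\mathcal{D}m$, while $\Lambda(\psi_{-1}(dm))$ does not depend on $n$ and is a weight $\ll \log N$. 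So the inner sum over $n$ is a sum of $k+1$ functions along one-dimensional progressions, and this is unsuitable for generalized von Neumann directly, since all forms are parallel in $n$. I will therefore keep $m$ as a second variable: fix $d$, and regard
\[
(m,n)\mapsto \Lambda(\psi_{-1}(dm))\Lambda(\Delta^2\mathcal{D}mn+1)\prod_j F_{t_j}\big(\Delta a_j dm+\Delta b_j\tfrac{\mathcal{D}}{d}n+1\big)
\]
as a two-variable system. The forms $\psi_j$ are pairwise non-parallel because $a_ib_j-a_jb_i\neq 0$, and $\Delta$ was chosen to absorb the local obstructions. The term $\Lambda(\psi_0(\mathcal{D}mn))$ is quadratic, so I handle it by fixing $m$ in dyadic blocks and absorbing it into the $n$-direction.

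Concretely, for fixed $d$ and $m$, the product $\Lambda(\Delta^2\mathcal{D}mn+1)F_{t_{j_0}}(\cdots)$ involves two non-parallel forms in $n$ once the $m$-dependence of the second is also used. I apply Cauchy--Schwarz in $m$ to eliminate the weights $\Lambda(\psi_{-1}(dm))$ and $\Lambda(\psi_0)$. The weights must first be majorized by a pseudorandom majorant, namely a Green--Tao type enveloping sieve $\nu$ built with $W$-trick modulus compatible with $W_1$ and the restriction $(m,W_1)=1$. Running the generalized von Neumann theorem relative to $\nu$, the sum for fixed $d$ is bounded by
\[
\ll \frac{MN}{\log\text{-losses}}\;\big\|(\Lambda-\widetilde{\Lambda}_{W_2})(q_{j_0}\cdot+r)\big\|_{U^{s}[N']}+o(\cdot),
\]
uniformly over residues $r$, with $s\ll k$. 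Summing over the $\tau(\mathcal{D})$ divisors gives the factor $\tau(\mathcal{D})$. Here the moduli $q_j$ are of size $\exp((\log N)^{\delta+o(1)})$ by \eqref{eqn_bound_for_cD}, which is small compared with $N$.

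The key input is a Gowers uniformity estimate for the Siegel-corrected von Mangoldt function along progressions of modulus up to $\exp((\log N)^{1/4})$ with a quasi-polynomial saving:
\[
\|\Lambda-\widetilde{\Lambda}_{W_2}\|_{U^{s}}\ll(\log N)^{-A}.
\]
I would invoke the Tao--Teräväinen quantitative Gowers norm bounds for primes with Siegel model, in their local version along progressions $a\pmod q$ with $q\leq \exp((\log N)^{c})$. The choice $w_2=\exp(\tfrac12(\log N)^{1/10})$ is made exactly so that this theorem applies. The resulting saving $(\log N)^{-A}$ with $A$ large beats the $(\log N)^{O(k)}$ losses from the weights and gives the required $(\log N)^{-10k}$.

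The main obstacle is the counting step: verifying the pseudorandomness (linear forms condition) of the majorant for the system involving the product form $\mathcal{D}mn$ and the large, $d$-dependent coefficients $\mathcal{D}/d$. The majorant bounds must hold uniformly in $d$ with losses only polynomial in $\log N$. I would first try to use the fact that, after fixing $m$, $\mathcal{D}mn$ is linear in $n$, and to handle coefficient sizes via the $W$-trick with moduli dividing $\Delta\mathcal{D}W_1$.
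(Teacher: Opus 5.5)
There is a genuine gap, and it is exactly at the step you flag as the main obstacle: your plan has no working mechanism for removing the product form $\Lambda(\psi_0(\mathcal{D}mn))$.

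For fixed $d$, Cauchy--Schwarz in $m$ alone does not eliminate this factor, since it depends on $n$. After expanding the square you are left, for each fixed pair $(n,n')$, with a product of forms that are all affine-linear in the single variable $m$. That is a parallel, twin-prime-type configuration which no Gowers norm controls. Cauchy--Schwarz in $n$ alone fails in the same way, and restricting $m$ to dyadic blocks does not linearize $mn$.

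Cauchy--Schwarz in $(m,n)$ jointly does remove $\Lambda(\psi_0)$. But with $d$ fixed the surviving factor is $\sum_{m,n}|\Lambda(\psi_{-1}(dm))\prod_j F_{t_j}(\cdots)|^2$, a sum of non-negative terms with no cancellation at all. Nor is there a generalized von Neumann theorem, relative to a majorant or otherwise, for a system containing the bilinear form $mn$ alongside linear forms. The ``linear forms condition for the product form'' is not a technical verification; it is essentially the whole difficulty of the $\{x,x+y,xy\}$ problem. In particular, the estimate you assert for each individual $d$ is not something these methods give; only the average over $d$ is accessible.

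The missing idea is that the product $xy=\Delta^2\mathcal{D}mn$ is the same for every $d\mid\mathcal{D}$, while the linear forms $L_j(\Delta dm,\Delta\mathcal{D}n/d)$ do depend on $d$. The paper applies Cauchy--Schwarz over $(m,n)$ with the entire sum over $d$ kept inside the square, and puts $\Lambda(\psi_0(\mathcal{D}mn))$ in the factor bounded trivially by $MN(\log N)^2$. This reduces matters to a bound of the shape
\[
\sum_{m\leq M}\sum_{n\leq N}\Bigl|\sum_{d\mid\mathcal{D}}\Lambda(\psi_{-1}(dm))\prod_{j=1}^kF_{t_j}\bigl(\psi_j\bigl(dm,\tfrac{\mathcal{D}}{d}n\bigr)\bigr)\Bigr|^2\ll\frac{\tau(\mathcal{D})^2MN}{(\log N)^{20k+2}}.
\]
Expanding over pairs $(d_1,d_2)$, all but at most $k^2\tau(\mathcal{D})$ pairs give $2k+2$ pairwise non-proportional linear forms in $(m,n)$, to which Proposition~\ref{prop_GvN} applies. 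The exceptional pairs are bounded trivially and are negligible precisely because $\tau(\mathcal{D})\geq(\log N)^{\delta\ell}$ with $\delta\ell>22k+4$. This is why $\mathcal{D}$ and the sum over $d$ appear in Theorem~\ref{thm_main_asymptotic} at all; summing over $d$ trivially, as you propose, throws the mechanism away.

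Once this is in place, no pseudorandom majorant is needed. The paper cuts $n$ into blocks of length $M$ to get a square box, embeds into $\mathbb{Z}/p\mathbb{Z}$ with $p\asymp\mathcal{D}M$, and normalizes the functions by their $L^\infty$ bounds $\ll\log N$. The short-interval bound of Theorem~\ref{lem_local_uniformity_norm_bound_for_siegel-corrected_vonMangoldt} (intervals of length $\asymp\mathcal{D}M$ at height up to $N^{1+o(1)}$) then saves $\exp(-(\log N)^{2\delta})$. That beats both the $(\log N)^{O(k)}$ losses and the factor $\mathcal{D}^2$ lost in the embedding, which is why a super-polylogarithmic saving, not just $(\log N)^{-A}$, is used there.
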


The proof of Proposition~\ref{prop_error_estimate_gowers_norm_bound} is found in Section~\ref{sec_proof_of_lem_error_estimate_gowers_norm_bound}.

\subsection{Step 2: Outer Siegel model approximation using Bombieri--Vinogradov estimate}

The next step in the proof is to replace $\Lambda(\psi_0(\mathcal{D}mn))$ by $\tfrac{m}{\varphi(m)}\widetilde{\Lambda}_{W_1}(\psi_0(\mathcal{D}mn))$, where $\widetilde{\Lambda}_{W_1}$ is the Siegel model at level $w_1$.
Consider the new main-term
\[
S_{\mathrm{main}}^{(2)}=
\sum_{d\mid \mathcal{D}}\sum_{\substack{m\leq M\\ (m,W_1)=1}}\sum_{n\leq N} \tfrac{m}{\varphi(m)} \Lambda(\psi_{-1}(dm))\widetilde{\Lambda}_{W_1}(\psi_0(\mathcal{D}mn))\prod_{j=1}^k \widetilde{\Lambda}_{W_2}\big(\psi_j\big(dm,\tfrac{\mathcal{D}}{d}n\big)\big).
\]

\begin{proposition}[Second main-term estimate]\label{prop:maintwo}
We have
\begin{align*}
|S_{\mathrm{main}}^{(1)}-S_{\mathrm{main}}^{(2)}| \ll \tau(\mathcal{D})MN\exp(-(\log N)^{1/20}).
\end{align*}    
\end{proposition}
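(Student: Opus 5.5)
Fix $d\mid\mathcal D$ and $m$ with $(m,W_1)=1$, and regard the inner sum over $n\le N$ as a sum of $\Lambda(\Delta^2\mathcal D m n+1)$ against a weight $g_{d,m}(n)=\prod_{j=1}^k\widetilde\Lambda_{W_2}\bigl(\psi_j(dm,\tfrac{\mathcal D}{d}n)\bigr)$. The plan is to show that $g_{d,m}$ is periodic in $n$ with small period up to a slowly varying archimedean factor, and then compare $\Lambda$ with $\tfrac{m}{\varphi(m)}\widetilde\Lambda_{W_1}$ on arithmetic progressions.

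First, $\Lambda_{W_2}$ is periodic modulo $W_2$, and $\widetilde\chi_2$ is periodic modulo $\qsiegel_2\le w_2$. So $g_{d,m}(n)$ is a linear combination of at most $2^k$ terms of the form $c(n)\prod_{j\in J}\psi_j(\cdot)^{\widetilde\beta_2-1}$. Here $c$ is periodic in $n$ with period $q\mid W_2\qsiegel_2$, so $q\le\exp(O((\log N)^{1/10}))$. The power factors are smooth and monotone in $n$ with bounded total variation. By partial summation, the smooth factors can be removed at the cost of taking a supremum over initial segments $n\le x$, $x\le N$. It then suffices to compare, for each residue class $n\equiv a \pmod q$,
\[
\sum_{\substack{n\le x\\ n\equiv a\ (q)}}\Bigl(\Lambda(\Delta^2\mathcal D mn+1)-\tfrac{m}{\varphi(m)}\widetilde\Lambda_{W_1}(\Delta^2\mathcal Dmn+1)\Bigr).
\]
With $r=\Delta^2\mathcal Dmq$, this is the difference between the count of primes $p\le X\approx rN$ with $p\equiv b\pmod r$ and its Siegel-corrected model. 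We have $r\le N^{3/4+o(1)}$, since $\mathcal D$ and $q$ are $N^{o(1)}$ and $m\le M=N^{3/4}$. The moduli $r$ are distinct for distinct $m$, with multiplicity $\ll\tau(\mathcal D)\cdot(\text{number of }q)$.

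Second, we need the model prediction. The main term for primes in $b\pmod r$ is $\frac{1}{\varphi(r)}\bigl(X-\widetilde\chi(b)X^{\widetilde\beta}/\widetilde\beta\bigr)$ when the Siegel character of level $w_1$ divides out $r$ appropriately. We check that averaging $\widetilde\Lambda_{W_1}$ over the progression reproduces exactly this, including the factor $\tfrac{m}{\varphi(m)}$. This works because $(m,W_1)=1$, so the primes of $m$ exceed $w_1$ and are invisible to $\Lambda_{W_1}$. The remaining part $\Delta^2\mathcal D q$ has only primes $\le w_1$, where $\Lambda_{W_1}$ gives the local density. The ratio $r/\varphi(r)$ thus splits as $\tfrac{m}{\varphi(m)}$ times the $W_1$-local factor. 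The error from truncating the model at $w_1$ (primes of $r$ above $w_1$ other than those of $m$) does not arise, since $\mathcal D q\mid$ a product of primes $\le w_2$.

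Third, we sum the errors using a Bombieri--Vinogradov theorem with Siegel-zero correction at level $N^{3/4+o(1)}<X^{1/2}\cdot$. This is the main obstacle. The level $r\approx N^{3/4}$ versus $X\approx N^{7/4}$ gives $r\le X^{3/7}<X^{1/2-\epsilon}$, so classical Bombieri--Vinogradov applies. To obtain the strong saving $\exp(-(\log N)^{1/20})$ rather than a log power, I would use the Siegel-corrected form: the large-sieve argument handles characters of conductor $>\exp((\log N)^{1/10})$-type thresholds. Characters of small conductor $\le w_1$ are treated by the zero-free region with a Landau--Page exceptional zero, which is subtracted precisely by the model $\widetilde\Lambda_{W_1}$. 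This gives the saving $\exp(-c\log X/\log w_1)\le\exp(-(\log N)^{3/4-o(1)})$ for those. The intermediate range $w_1<\text{cond}\le \exp((\log N)^{1/10}\cdots)$ uses the log-free density estimate and Siegel-free zero-free region. Finally, the count is taken with absolute values over $m$, and the bounded weights and the $\tau(\mathcal D)q$ multiplicity cost $\exp(O((\log N)^{1/10}))$. This is absorbed by the saving, giving the bound $\tau(\mathcal D)MN\exp(-(\log N)^{1/20})$.
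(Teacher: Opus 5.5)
\textbf{There is a genuine gap at your first step.} You take $c(n)$ to be periodic modulo some $q\mid W_2\qsiegel_2$ with $q\le\exp(O((\log N)^{1/10}))$. That bound is the size of $w_2$ (or of $\qsiegel_2$), not of the primorial $W_2=\prod_{p\le w_2}p$.

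By the prime number theorem, $\log W_2\sim w_2=\exp(\tfrac12(\log N)^{1/10})$, which grows faster than any power of $\log N$. So $W_2>N^A$ for every fixed $A$. There is no smaller period to fall back on: for each prime $p\le w_2$ with $p\nmid\Delta\mathcal D$, every form $n\mapsto\psi_j(dm,\tfrac{\mathcal D}{d}n)$ is non-constant modulo $p$. Hence the local factor at $p$ of your weight is a non-constant function of $n \bmod p$.

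Consequently your progressions $n\equiv a\pmod q$ with $n\le N$ contain at most one term, and the moduli $\Delta^2\mathcal Dmq$ are far larger than $X$. The multiplicity you plan to absorb is astronomically large, so no Bombieri--Vinogradov estimate can be applied.

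The missing idea is a sieve truncation, carried out in three steps.
\begin{enumerate}
\item Replace $\Lambda_{W_2}$ by the type~I sum $\Lambda_{W_2}^{(I)}(n)=\tfrac{W_2}{\varphi(W_2)}\sum_{r\mid W_2,\,r\le w_2^{s}}\mu(r)1_{r\mid n}$ with $s=1000(\log N)^{1/10}$.
\item Bound the error in $L^1$ along each progression $n\mapsto\Delta a_jdm+\tfrac{\Delta\mathcal Db_j}{d}n+1$. This is Lemma~\ref{le:Lambdareplace}, which rests on Shiu's bound and applies because the gcd of the two coefficients divides $\mathcal D$.
\item Pass from one factor to the product of $k$ factors by a telescoping identity and H\"older's inequality, using moment bounds for $\Lambda_{W_2}^{(I)}$, which is not pointwise bounded.
\end{enumerate}
Only after this is the weight a combination of $\ll w_2^{2ks}$ progressions with $w_2$-smooth moduli $q\le w_2^{2ks}=\exp(O_k((\log N)^{1/5}))$. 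This is small enough that $\Delta^2\mathcal Dq\le w_1$. The saving $w_1^{-1/4}=\exp(-\tfrac14(\log N)^{1/4})$ from the equidistribution step beats this count precisely because $1/5<1/4$.

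\textbf{The same problem recurs in your second step.} $\widetilde\Lambda_{W_1}$ does not have any period of manageable size, just as above with $w_1$ in place of $w_2$. Its relevant periodicity is vastly longer than the progressions of length about $N/q$ you average over. So its average does not ``reproduce exactly'' the Landau--Page main term. As in the proof of Lemma~\ref{le:BV}, you must:
\begin{itemize}
\item truncate $\Lambda_{W_1}$ to divisors $r\le w_1^{s_0}$ with $s_0\asymp\log w_1$;
\item control the tail $\sum_{r\mid W_1,\,r>w_1^{s_0}}1/r$;
\item compute the twisted part $\widetilde\chi_1(n)n^{\widetilde\beta_1-1}\Lambda^{(I)}_{W_1}(n)$ separately, using primitivity of $\widetilde\chi_1$ to see that it survives only when $\qsiegel_1$ divides the small modulus.
\end{itemize}

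\textbf{What is already right.} Your outline of the equidistribution step itself matches Lemma~\ref{le:BV}. Because $(m,W_1)=1$, every non-principal character modulo $m$ has conductor exceeding $w_1$. So Vaughan's identity and the large sieve save roughly $w_1^{-1}$. Characters modulo the small part $\Delta^2\mathcal Dq\le w_1$ are handled by the Landau--Page theorem.

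Two small corrections there, neither of which affects the final bound. No intermediate range of conductors, and hence no log-free density estimate, is needed. And the Landau--Page saving is $\exp(-c\sqrt{\log X})$ rather than $\exp(-(\log N)^{3/4-o(1)})$.
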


The proof relies on a variant of the Bombieri--Vinogradov theorem, Lemma~\ref{le:BV}.
Consider the remainder term
\begin{align*}
S_{\mathrm{BV}}&=
\sum_{d\mid \mathcal{D}}\sum_{\substack{m\leq M\\ (m,W_1)=1}}\sum_{n\leq N} \Lambda(\psi_{-1}(dm)) \big(\Lambda-\tfrac{m}{\varphi(m)}\widetilde{\Lambda}_{W_1}\big)(\psi_0(\mathcal{D}mn))\prod_{j=1}^k \widetilde{\Lambda}_{W_2}\big(\psi_j\big(dm,\tfrac{\mathcal{D}}{d}n\big)\big)
\end{align*}
so that
\[
S_{\mathrm{main}}^{(1)}=S_{\mathrm{main}}^{(2)} + S_{\mathrm{BV}}.
\]
Hence Proposition~\ref{prop:maintwo} is immediately seen to be equivalent to the next result.

\begin{proposition}[Error-term estimate -- Bombieri--Vinogradov bound]
\label{prop_error_estimate_bombieri-vinogradov_bound}
We have
\begin{align*}
|S_{\mathrm{BV}}|
&\ll \tau(\mathcal{D})MN\exp(-(\log N)^{1/20}).
\end{align*}
\end{proposition}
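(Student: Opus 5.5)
We need to bound $S_{\mathrm{BV}}$. The plan is to fix $d\mid\mathcal{D}$ and $m\le M$ with $(m,W_1)=1$, and to view the inner sum over $n\le N$ as a sum of $\Lambda-\tfrac{m}{\varphi(m)}\widetilde{\Lambda}_{W_1}$ along the progression $\{\Delta^2\mathcal{D}mn+1\}$, weighted by the product of Siegel models $\prod_j\widetilde{\Lambda}_{W_2}(\psi_j(dm,\tfrac{\mathcal{D}}{d}n))$.

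First I remove the dependence on $n$ in this weight.
\begin{itemize}
\item The factor $\Lambda_{W_2}(\psi_j(\cdot))$ depends only on $n$ modulo $W_2$. Since $w_2\ll(\log N)^{1/10}$ in the exponent, $W_2=\exp((1+o(1))w_2)$ is tiny compared to any power of $N$.
\item The factor $\widetilde{\chi}_2(\psi_j)$ depends only on $n$ modulo $\qsiegel_2\le w_2$.
\item The factor $\psi_j(\cdot)^{\widetilde{\beta}_2-1}$ is smooth and monotone in $n$, with bounded total variation.
\end{itemize}
Expanding the product over $j$ therefore gives $2^k$ terms. Each term is a periodic function of $n$ modulo $q_0=\operatorname{lcm}(W_2,\qsiegel_2)$, times a product of powers $\psi_j^{\widetilde{\beta}_2-1}$. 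By partial summation in $n$, I reduce to bounding
\[
\max_{N'\le N}\max_{b\pmod{q_0}}\Big|\sum_{\substack{n\le N'\\ n\equiv b\pmod{q_0}}}\big(\Lambda-\tfrac{m}{\varphi(m)}\widetilde{\Lambda}_{W_1}\big)(\Delta^2\mathcal{D}mn+1)\Big|.
\]
The loss from this reduction is $O(1)$ multiplied by the $O(1)$ variation.

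Next I write $u=\Delta^2\mathcal{D}mn+1$. The condition becomes $u\equiv 1\pmod{\Delta^2\mathcal{D}m}$, combined with a congruence modulo $q_0$ (after lifting, a single class modulo $q=\operatorname{lcm}(\Delta^2\mathcal{D}m,\Delta^2\mathcal{D}q_0)$) and $u\le X\asymp \mathcal{D}MN$. The moduli satisfy $q\le \mathcal{D}^2 M W_2\Delta^2=N^{3/4+o(1)}$. This is below $X^{1/2}$ only if $3/4<1/2\cdot 7/4$, i.e. $0.75<0.875$, which holds. So we are in the Bombieri--Vinogradov range $q\le X^{1/2-\eps}$.

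I then sum over $m$ and apply the paper's variant Lemma~\ref{le:BV}. It should state that for moduli up to $X^{1/2-\eps}$, $\Lambda$ on a residue class $a\pmod q$ is approximated on average by the Siegel model $\widetilde{\Lambda}_{W_1}$, normalized by the local factor $\prod_{p\mid q,\,p\nmid W_1}\tfrac{p}{p-1}$, with error $X\exp(-(\log X)^{1/10})$ or better. The Siegel model at level $w_1$ accounts for the exceptional character, whose conductor is at most $w_1$. Any primes of $q$ exceeding $w_1$ come from $m$ alone, because $\mathcal{D}$ and $q_0$ involve only primes below $w_2<w_1$ and $(m,W_1)=1$. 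The normalizing factor is therefore exactly $\tfrac{m}{\varphi(m)}$, which is why that factor appears in $S_{\mathrm{main}}^{(2)}$.

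Since each $m$ gives distinct moduli up to the bounded multiplicity coming from $d$ and $b$, the sum over $m$ and the residue $b$ is controlled by Lemma~\ref{le:BV}. I use Cauchy--Schwarz or the trivial bound $\Lambda(\psi_{-1}(dm))\le\log N$ to handle the outer weight. The $q_0$ residues contribute a factor $q_0\le\exp(O((\log N)^{1/10}))$, which is absorbed against the saving $\exp(-(\log N)^{1/10+})$; this is where the choice of exponents $1/4$, $1/10$ and $1/20$ matters. Summing over the $\tau(\mathcal{D})$ divisors $d$ then gives the bound $\tau(\mathcal{D})MN\exp(-(\log N)^{1/20})$.

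The main obstacle is matching the saving from Lemma~\ref{le:BV} against the losses $q_0$, $\tau(\mathcal{D})$ and the powers of $\log$. One needs the BV error to beat $\exp(C(\log N)^{1/10})$, which requires a Siegel-corrected BV (Page's theorem with the level-$w_1$ exceptional zero removed) giving a saving of roughly $\exp(-c\sqrt{\log N})$ for moduli free of large primes, or $\exp(-c\log N/\log w_1)$. I would first try to invoke the lemma in that form, with the moduli restricted to $W_1$-rough parts times $w_2$-smooth parts.
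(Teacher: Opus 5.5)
There is a genuine gap at the very first reduction. You claim that $W_2$ is tiny compared to any power of $N$, and later that $q_0=\operatorname{lcm}(W_2,\qsiegel_2)\le\exp(O((\log N)^{1/10}))$. Both statements confuse $w_2$ with $W_2$.

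In fact $\log W_2\sim w_2=\exp(\tfrac12(\log N)^{1/10})$. Since $\tfrac12(\log N)^{1/10}$ eventually exceeds $A\log\log N$ for every fixed $A$, we have $W_2>N^A$ for every fixed $A$. So the $W_2$-periodicity of $\Lambda_{W_2}(\psi_j(\cdot))$ gives you nothing:
\begin{itemize}
\item every class $n\equiv b\pmod{q_0}$ contains at most one $n\le N$;
\item splitting into these classes costs a factor $q_0$ that no saving can absorb;
\item your bound $N^{3/4+o(1)}$ on the moduli is false;
\item the hypothesis $q\le w$ of Lemma~\ref{le:BV} fails, and no Bombieri--Vinogradov type estimate can handle moduli larger than the length of the sum anyway.
\end{itemize}

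The missing idea is to replace the sieve weight by a short divisor sum before invoking Bombieri--Vinogradov. The paper replaces each $\widetilde\Lambda_{W_2}$ by $\widetilde\Lambda_{W_2}^{(I)}$, in which $\Lambda_{W_2}$ is truncated to $\frac{W_2}{\varphi(W_2)}\sum_{r\mid W_2,\,r\le w_2^{s}}\mu(r)1_{r\mid n}$ with $s=1000(\log N)^{1/10}$. Lemma~\ref{le:Lambdareplace}, a Shiu-bound argument, gives an $L^1$ error $\ll Ne^{-s/12}$ for each factor. It applies because $\psi_j(dm,\tfrac{\mathcal D}{d}n)=A_jn+B_j$ with $(A_j,B_j)\mid\mathcal D$. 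Moment bounds and H\"older's inequality then transfer this error to the product of the $k$ factors.

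Only after this truncation does the paper do the following:
\begin{enumerate}
\item expand $\widetilde\chi_2$ and the divisor sums;
\item remove the weights $\psi_j^{\widetilde\beta_2-1}$ by summation by parts;
\item arrive at $\ll w_2^{2ks}$ progressions with $w_2$-smooth moduli $q'\le w_2^{2ks}=\exp(O((\log N)^{1/5}))$.
\end{enumerate}
Lemma~\ref{le:BV}, applied with $w=w_1$ and modulus $\Delta^2\mathcal Dq'\le w_1$, then saves $w_1^{-1/4}=\exp(-\tfrac14(\log N)^{1/4})$ per progression. This beats the loss $w_2^{3ks}$ precisely because $1/4>1/5$. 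That is how the exponents $1/4$ and $1/10$ interact; no saving of size $\exp(-c\sqrt{\log N})$ is needed or available.
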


The proof of Proposition~\ref{prop_error_estimate_bombieri-vinogradov_bound} is carried out in Section~\ref{sec_proof_of_lem_error_estimate_BV_bound}.

\subsection{Step 3: Removal of Siegel correction and singular series estimate}

In this next step, we replace the sum in $n$ with a singular series. Consider
\[
S_{\mathrm{main}}^{(3)}= N
\sum_{d\mid \mathcal{D}}\sum_{\substack{m\leq M\\ (m,W_1)=1}} \tfrac{m}{\varphi(m)} \Lambda(\psi_{-1}(dm))\mathfrak{S}(m,d),
\]
where $\mathfrak{S}(m,d)$ denotes the product
\[
\mathfrak{S}(m,d)=\prod_{p\leq w_1}\beta_p(m,d)
\]
with
\begin{equation}
\label{def_beta_dm}
\beta_p(m,d)
=
\begin{cases}
\frac{1}{p(1-\frac{1}{p})^{k+1}}\# \big\{x\in \mathbb{Z}/p\mathbb{Z}:
p\nmid \psi_0(\mathcal{D}mx)\prod_{j=1}^k \psi_j\big(dm,\tfrac{\mathcal{D}}{d}x\big)\big\},&\text{if}~p\leq w_2\\
\frac{1}{p(1-\frac{1}{p})}\# \big\{x\in \mathbb{Z}/p\mathbb{Z}:
p\nmid \psi_0(\mathcal{D}mx) \big\},&\text{if}~w_2< p\leq w_1
\end{cases}
\end{equation}

The goal is to show the following proposition.
\begin{proposition}[Third main-term estimate]\label{prop:mainthree}
For every $A\geq 1$ we have
\begin{align*}
|S_{\mathrm{main}}^{(2)}-S_{\mathrm{main}}^{(3)}| \ll_A \frac{\tau(\mathcal{D})MN}{(\log N)^A}.
\end{align*}
\end{proposition}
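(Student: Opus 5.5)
We need to show that, for each fixed $d,m$, the inner sum over $n\leq N$ of the product of Siegel models is $N\mathfrak{S}(m,d)$ up to a small error. We then sum this against $\tfrac{m}{\varphi(m)}\Lambda(\psi_{-1}(dm))$ over $m\leq M$, $(m,W_1)=1$ and $d\mid\mathcal{D}$. Since $\sum_{m\leq M}\tfrac{m}{\varphi(m)}\Lambda(\Delta dm+1)\ll M\log\log N$ (Brun--Titchmarsh, or trivially $\ll M\log N$), it suffices to prove, uniformly in $d\mid\mathcal{D}$ and $m\leq M$ with $(m,W_1)=1$,
\[
\sum_{n\leq N}\widetilde{\Lambda}_{W_1}(\psi_0(\mathcal{D}mn))\prod_{j=1}^k\widetilde{\Lambda}_{W_2}\big(\psi_j\big(dm,\tfrac{\mathcal{D}}{d}n\big)\big)=N\mathfrak{S}(m,d)+O_A\big(N(\log N)^{-A-1}\big).
\]

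The first step is to expand the Siegel factors $1-\widetilde{\chi}_i(\cdot)(\cdot)^{\widetilde{\beta}_i-1}$. This expresses the summand as the purely local product $\Lambda_{W_1}(\psi_0(\mathcal{D}mn))\prod_j\Lambda_{W_2}(\psi_j(\cdot))$ plus $2^{k+1}-1$ cross terms. Each cross term contains a product of some $\widetilde{\chi}_i$ evaluated at the affine forms, weighted by $\prod (\text{form})^{\widetilde{\beta}_i-1}$.

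For the local main term, the function is periodic in $n$ with period $W_1$, because the product $\prod_{p\leq w_1}$ factors by CRT. Since $W_1=\exp((1+o(1))w_1)$ is astronomically larger than $N$, we cannot simply average over a full period. Instead we expand each $\Lambda_p=\frac{p}{p-1}(1-1_{p\mid\cdot})$ multiplicatively and apply the fundamental lemma of sieve theory (\cite[Fundamental Lemma~6.3]{iw-kow}). The primes $p\leq w_2$ contribute a sieve of dimension $k+1$ with a product of local densities. For the primes $w_2<p\leq w_1$, only $\psi_0(\mathcal{D}mn)$ is sifted, so this is a one-dimensional sieve of level $w_1=\exp((\log N)^{1/4})$ on a sequence of length $N$. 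There $s=\log N/\log w_1=(\log N)^{3/4}$, so the fundamental lemma error is $\exp(-s)$, which is far below any power of $\log N$. For $p\leq w_2$ the dimension is $k+1$ and $s$ is even larger. The product of local densities matches $\beta_p(m,d)$ exactly, by CRT and the definitions in \eqref{def_beta_dm}. Here we use that the normalizations $\frac{p}{p-1}$ coincide with the factors $(1-\frac1p)^{-(k+1)}$ and $(1-\frac1p)^{-1}$. Note also that $\mathcal{D}m\neq0\pmod p$ for $p\leq w_1$ is not needed: if $p\mid \mathcal{D}m$ then $\psi_0\equiv1$, consistent with $\beta_p$. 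Remainder terms are bounded by $O(\text{level}^{1+o(1)})$, with level $w_1^{o(\log N/\log w_1)}$ small compared with $N$.

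The main obstacle is the cross terms involving Siegel characters. These must be shown to contribute $\ll_A N(\log N)^{-A}$. I would first try to use the fact that a term containing $\widetilde{\chi}_i(\psi(n))$ for some affine form $\psi$ is a twisted sieve sum. After fixing $n$ modulo $\qsiegel_i$-compatible residues, the sum over a residue class $n\equiv a\pmod{\operatorname{lcm}(W',\qsiegel)}$ of the character is constant, so the character must be paired with the local weights. The key point is that $\widetilde{\chi}_i$ is primitive of conductor $\qsiegel_i\gg_A(\log w_i)^A$ by \eqref{eq:Siegelbound}, and its prime divisors divide $Q$. In the expansion modulo primes $p\mid\qsiegel_i$, $p\leq w_i$, the local factor at $p$ of the averaged twisted sum is $\frac{p}{p-1}\sum_{x\bmod p,\,p\nmid\psi(x)}\chi_p(\psi(x))$ (times the other factors). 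For a linear $\psi$ with invertible leading coefficient mod $p$ this sum vanishes, since we sum a nontrivial character over all units. If two Siegel factors are multiplied, a product $\chi_p(\psi\psi')$ appears, whose sum is a Jacobi-type sum of size $O(\sqrt p)$ relative to $p$. In every case we gain a factor $\ll p^{-1/2}$ per prime of $\qsiegel$, or $0$, giving a saving of $\qsiegel^{-1/2+o(1)}\ll_A(\log N)^{-A}$. The weights $n^{\widetilde{\beta}-1}$ are smooth and monotone, so they are removed by partial summation. Leading coefficients divisible by $p$ occur only for $p\mid\Delta\mathcal{D}m$. Since $(m,W_1)=1$, since $\mathcal{D}$ avoids prime factors of $Q$ by Definition~\ref{def_choice_of_primes}, and since primes dividing $\Delta$ are small (the $2$-part is handled by the $8$-primitivity), these cases can be treated by hand or absorbed. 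Combining the main and cross-term estimates and summing over $d,m$ gives the proposition.
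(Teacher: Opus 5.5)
Your treatment of the purely local term by the fundamental lemma is essentially the paper's Lemma~\ref{lem_singular_series_estimate} with $R=1$, and that part is fine. The gap is in the cross terms. You reduce everything to an asymptotic for the $n$-sum that is \emph{uniform in $m$}, and that pointwise statement is false when a Siegel zero of level $w_2$ exists.

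Your claim that each prime $p\mid\qsiegel$ gives a saving $p^{-1/2}$ (or $0$) fails whenever two of the affine forms inside the character are proportional modulo $p$, i.e.\ have a common root. This degeneracy has nothing to do with leading coefficients, which is the only one you address; it depends on $m\bmod p$. Concretely:
\begin{itemize}
\item $\psi_i(dm,\tfrac{\mathcal D}{d}n)$ and $\psi_j(dm,\tfrac{\mathcal D}{d}n)$ are proportional mod $p$ exactly when $\Delta dm(a_jb_i-a_ib_j)+b_i-b_j\equiv 0\pmod p$;
\item $\psi_0(\mathcal Dmn)$ and $\psi_j$ are proportional mod $p$ exactly when $a_j\Delta^2d^2m^2+\Delta dm-b_j\equiv 0\pmod p$.
\end{itemize}
In that case $\eta_p(\psi_i\psi_j)=\eta_p(c_p)$ is constant on the support, and the local sum has full size $\asymp p$.

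Take $L_j(x,y)=x+jy$ with $k\geq 2$, so $b_i\neq b_j$. By the Chinese remainder theorem, a proportion $\gg 1/\qsiegel_2$ of the $m\leq M$ with $(m,W_1)=1$ lie in this bad class modulo every odd $p\mid\qsiegel_2$ with $p\nmid\Delta$. For these $m$, $\widetilde\chi_2(\psi_i\psi_j)$ is a fixed sign $\pm1$ on the support of $\prod_l\Lambda_{W_2}(\psi_l)$; the $2$-part and the primes dividing $\Delta$ contribute $1$. So the cross term with $t_i=t_j=1$ equals $\pm$ a positive quantity comparable to $N\mathfrak S(m,d)$. The weights $(\cdot)^{\widetilde\beta_2-1}$ give no saving if $\widetilde\beta_2$ is very close to $1$, which cannot be excluded. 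Hence your displayed asymptotic does not hold uniformly in $m$.

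The missing idea is to average over $m$ before extracting the character saving, as the paper does in Lemma~\ref{lem_cramer_approx_1}. The steps there are:
\begin{enumerate}
\item Remove the weights $z_{\mathbf t}$ by partial summation, and bound $\sum_{m\leq M,\,(m,Q)=1}\bigl|\sum_{n\leq N'}f_{\mathbf t}(n,m,d)\bigr|$ rather than each $m$ separately.
\item Split $f_{\mathbf t}=g_{\mathbf t}\,g_\perp$, where $g_{\mathbf t}$ is $Q$-periodic in both variables.
\item Replace $g_\perp$ by $\mathfrak S^\perp_Q(m,d)$, using averaging over the shifts $n\mapsto n+hQ$ and Lemma~\ref{lem_singular_series_estimate} with $R=Q$.
\item This leaves complete sums $\sum_{m\bmod Q}\bigl|\sum_{n\bmod Q}g_{\mathbf t}\bigr|$, which factor over the primes $p\mid Q$.
\item At $p\mid\qsiegel_i$, Weil's bound gives $O(p^{1/2})$ for residues $m$ with $P_d(m)\not\equiv 0\pmod p$. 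The $O_k(1)$ roots of $P_d$, which are exactly the bad classes above, get the trivial bound $O(p)$.
\end{enumerate}
This yields $\sum_{m\bmod p}\bigl|\sum_{n\bmod p}g_{\mathbf t,p}\bigr|\ll p^{3/2}$. The saving $\qsiegel_i^{-1/2+\varepsilon}\ll_A(\log N)^{-A}$ therefore holds only on average over $m$, and your cross-term argument has to be restructured in this averaged form.
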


The proof of Proposition~\ref{prop:mainthree} is done in two steps. 
Consider the variant of $S_{\mathrm{main}}^{(2)}$
where all appearances of the Siegel model are replaced with the Cramer model:
\[
S_{\mathrm{main}}^{(2.5)}=
\sum_{d\mid \mathcal{D}}\sum_{\substack{m\leq M\\ (m,W_1)=1}}\sum_{n\leq N} \tfrac{m}{\varphi(m)} \Lambda(\psi_{-1}(dm))\Lambda_{W_1}(\psi_0(\mathcal{D}mn))\prod_{j=1}^k \Lambda_{W_2}\big(\psi_j\big(dm,\tfrac{\mathcal{D}}{d}n\big)\big).
\]
Clearly, Proposition~\ref{prop:mainthree} follows readily by combining the next two estimates.

\begin{proposition}[Error-term estimate -- Sieve bound]\label{prop:sieve-bound}
We have
\begin{align*}
|S_{\mathrm{main}}^{(3)}-S_{\mathrm{main}}^{(2.5)}| \ll \tau(\mathcal{D})MN\exp\bigl(-(\log N)^{1/2}\bigr).
\end{align*}
\end{proposition}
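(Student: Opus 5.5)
We fix $d\mid\mathcal{D}$ and $m\leq M$ with $(m,W_1)=1$. The plan is to show that the inner sum over $n$ in $S_{\mathrm{main}}^{(2.5)}$ is $N\mathfrak{S}(m,d)$ up to a small error. Since the weight $\tfrac{m}{\varphi(m)}\Lambda(\psi_{-1}(dm))\ll (\log N)\log\log N$, it then suffices to show
\[
\sum_{n\leq N}\Lambda_{W_1}(\psi_0(\mathcal{D}mn))\prod_{j=1}^k \Lambda_{W_2}\big(\psi_j\big(dm,\tfrac{\mathcal{D}}{d}n\big)\big)=N\mathfrak{S}(m,d)+O\big(N\exp(-2(\log N)^{1/2})\big)
\]
uniformly in $d,m$. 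Summing over $m\leq M$ and the $\tau(\mathcal{D})$ choices of $d$ then gives the proposition.

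First we factor the summand over primes. By definition, $\Lambda_{W_1}(\psi_0(\cdot))\prod_j\Lambda_{W_2}(\psi_j(\cdot))=\prod_{p\leq w_1} g_p(n)$, where $g_p(n)$ depends only on $n\bmod p$. For $p\leq w_2$,
\[
g_p(n)=\big(\tfrac{p}{p-1}\big)^{k+1}1_{p\nmid \psi_0(\mathcal{D}mn)\prod_j\psi_j(dm,\frac{\mathcal{D}}{d}n)},
\]
and for $w_2<p\leq w_1$ only the $\psi_0$ factor is present. The average of $g_p$ over $\mathbb{Z}/p\mathbb{Z}$ is exactly $\beta_p(m,d)$. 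If we could sum over $n$ modulo $W_1$ exactly, the Chinese remainder theorem would give $N\prod_p\beta_p$. The difficulty is that $W_1=\exp((1+o(1))w_1)$ is vastly larger than $N$, so this cannot be done directly.

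To get around this, we use the fundamental lemma of sieve theory (Iwaniec--Kowalski, Fundamental Lemma 6.3), applied for each fixed $(d,m)$ to the sieve problem with local densities. For each $p$, let $\omega(p)$ be the number of residues $x\bmod p$ that are excluded. This is at most $k+1$, or at most $1$ when $p>w_2$. The main point is that for $p\nmid \Delta dm$ these forms are nondegenerate, so $\omega(p)<p$ for $p>k+1$. For $p\leq k+3$ we have $p\mid\Delta$, so $\psi_j\equiv1\pmod p$ and $\omega(p)=0$. The sieve dimension is at most $k+1$. We take sieve level $z=w_1$ and sifting range $N$. With $s=\log N/\log w_1=(\log N)^{3/4}$, the fundamental lemma yields
\[
\sum_{n\leq N}1_{\text{sifted}}(n)=N\prod_{p\leq w_1}\Big(1-\tfrac{\omega(p)}{p}\Big)\big(1+O(e^{-s})\big)+O\Big(\sum_{e\leq N^{1/2},\,e\mid W_1}\tau_{k+1}(e)\Big).
\]
Here $e^{-s}=\exp(-(\log N)^{3/4})$. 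The remainder is $\ll N^{1/2+o(1)}$, since the remainder terms $r_e$ are bounded by $\omega(e)\leq\tau_{k+1}(e)$. Multiplying by $\prod_p(\tfrac p{p-1})^{\cdot}$ converts $\prod(1-\omega(p)/p)$ into $\prod\beta_p=\mathfrak{S}(m,d)$. This multiplier is $\ll(\log w_1)^{k+1}$, so the relative error stays $\exp(-(\log N)^{3/4}+O(\log\log N))$.

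The main obstacle is the degenerate primes. When $p\mid dm$ we have $\psi_j(dm,\cdot)\equiv \Delta b_j\tfrac{\mathcal{D}}{d}n+1$, and when $p\mid\mathcal{D}/d$ we have $\psi_j\equiv\Delta a_jdm+1$. Also, $p\mid\mathcal{D}$ makes $\psi_0\equiv 1$. In these cases $\omega(p)$ may drop, or the forms may become constant in $n$. If some form is constantly $\equiv0\bmod p$, then $\beta_p=0$ and both sides vanish identically, so that case is consistent. Otherwise the density is still a well-defined function $\omega(p)\leq k+1<p$ of $n \bmod p$, and the fundamental lemma applies verbatim. We only need to check the dimension condition $\prod_{w\leq p<z}(1-\omega(p)/p)^{-1}\ll(\log z/\log w)^{k+1}$, which holds because $\omega(p)\leq k+1$ for all $p$. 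Since $(m,W_1)=1$ and $m\leq M$, the prime factors of $m$ exceed $w_1$ and do not enter the product. The degeneracies therefore come only from $p\mid \mathcal{D}$, and there are $\ell$ such primes, a bounded number.

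Combining these, the error per $(d,m)$ is $\ll N\exp(-(\log N)^{3/4+o(1)})+N^{1/2+o(1)}$. Multiplying by $(\log N)^2$ and summing gives a total error $\ll\tau(\mathcal{D})MN\exp(-(\log N)^{1/2})$, as claimed.
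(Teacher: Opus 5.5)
Your proposal is correct and is essentially the paper's argument: the paper deduces the proposition from Lemma~\ref{lem_singular_series_estimate} with $R=1$, $H=N$. That lemma is a fundamental-lemma sieve on $\{n : n \bmod p\notin A_p \text{ for all } p\le w_1\}$ with densities $|A_p|/p\le (k+1)/p$, dismisses the degenerate case $A_p=\Z/p\Z$ because both sides vanish, and renormalises by $\frac{W_1}{\varphi(W_1)}\bigl(\frac{W_2}{\varphi(W_2)}\bigr)^k\ll(\log w_1)^{k+1}$. The differences are cosmetic: the paper sieves at level $H^{1/4}$ and states the density condition with a large $K\ll(\log w_1)^{k+1}$ instead of dimension $k+1$ with bounded $K$, and your nondegeneracy condition ``$p\nmid\Delta dm$'' should read $p\nmid\Delta\mathcal{D}m$, which your following paragraph in effect already handles.
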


\begin{proposition}[Error-term estimate -- Weil bound]\label{prop:weil-bound}
For every $A\geq 1$ we have
\begin{align*}
|S_{\mathrm{main}}^{(2)}-S_{\mathrm{main}}^{(2.5)}| \ll_A \frac{\tau(\mathcal{D})MN}{(\log N)^A}.
\end{align*}
\end{proposition}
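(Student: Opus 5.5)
We need to bound $S^{(2)}_{\mathrm{main}}-S^{(2.5)}_{\mathrm{main}}$, where the Siegel models are replaced by the Cramér models $\Lambda_{W_1}$ and $\Lambda_{W_2}$. If neither Siegel zero exists there is nothing to prove, so assume at least one of $\widetilde{\chi}_1,\widetilde{\chi}_2$ is nontrivial. We expand each factor
\[
\widetilde{\Lambda}_{W_i}(n)=\Lambda_{W_i}(n)-\Lambda_{W_i}(n)\widetilde{\chi}_i(n)n^{\widetilde{\beta}_i-1}.
\]
The difference $S^{(2)}_{\mathrm{main}}-S^{(2.5)}_{\mathrm{main}}$ then becomes a sum of at most $2^{k+1}$ terms. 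Each term has the shape
\[
\sum_{d\mid\mathcal D}\sum_{\substack{m\leq M\\(m,W_1)=1}}\tfrac{m}{\varphi(m)}\Lambda(\psi_{-1}(dm))\sum_{n\leq N}\prod_{j\in J}\Lambda_{W_{i_j}}(\psi_j(\cdot))\,\widetilde{\chi}_{i_j}(\psi_j(\cdot))\,\psi_j(\cdot)^{\widetilde{\beta}_{i_j}-1},
\]
with $J\subseteq\{0,\dots,k\}$ nonempty. The weights $\psi_j^{\widetilde{\beta}-1}$ are smooth and monotone in $n$, with total variation $O(1)$. So by partial summation in $n$ it suffices to show, for each fixed $d,m$ and each $J\neq\emptyset$, that the character sum
\[
\sum_{n\leq N'}\prod_{j}\Lambda_{W_{i_j}}(\psi_j(dm,\tfrac{\mathcal D}{d}n))\prod_{j\in J}\widetilde{\chi}_{i_j}\big(\psi_j(dm,\tfrac{\mathcal D}{d}n)\big)
\]
is $\ll_A N(\log N)^{-A}$ uniformly in $N'\leq N$, at least for all but a negligible set of $m$. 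The factor $\Lambda(\psi_{-1}(dm))$ is harmless: we bound it trivially and use Brun--Titchmarsh to count the surviving $m$.

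The inner summand is periodic in $n$ with modulus $W_1$, or more precisely $\operatorname{lcm}(W_2,\qsiegel_1,\qsiegel_2)$ restricted to the relevant primes, which is enormous compared with $N$. We therefore cannot simply complete the sum. Instead we use that $\Lambda_{W}$ factors over primes, and we truncate with a fundamental-lemma sieve as in Proposition~\ref{prop:sieve-bound}. This replaces $\prod_p\Lambda_p$ by a sieve weight supported on moduli $e\leq N^{o(1)}$, at the cost of an error of size $\exp(-(\log N)^{1/2})$. We are then left with sums of $\prod_{j\in J}\widetilde\chi(\psi_j(\cdot))$ over arithmetic progressions $n\equiv a\pmod e$. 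These are periodic modulo $\operatorname{lcm}(e,\qsiegel)$ with $\qsiegel\leq w_1=\exp((\log N)^{1/4})$, so they can be completed. By the Chinese remainder theorem the complete sum factors into local sums over primes $p\mid\qsiegel_1\qsiegel_2$, namely
\[
\sum_{x\bmod p}\prod_{j\in J}\chi_p(\alpha_jx+\gamma_j)\times(\text{local }\Lambda_p\text{ factors}).
\]
Here $\chi_p$ is the Legendre symbol. The affine forms $x\mapsto\alpha_j x+\gamma_j$, with $\alpha_j=\Delta b_j\mathcal D/d$ or $\Delta^2\mathcal Dm$, are pairwise non-proportional modulo $p$. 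This uses the hypothesis $a_ib_j-a_jb_i\neq0$, the choice of $\Delta$, and the fact that by Definition~\ref{def_choice_of_primes} the primes of $\mathcal D$ avoid $Q$. It also needs $p\nmid m$, which holds for $p\leq w_1$ because $(m,W_1)=1$. Weil's bound then gives $|\text{local sum}|\leq (k+2)\sqrt p$ whenever $p$ exceeds a constant, while the main term of the local factor has size $\asymp p$. Hence the completed sum carries a saving of $\prod_{p\mid\qsiegel,\,p>C}O(p^{-1/2})\ll \qsiegel^{-1/2+o(1)}$. By \eqref{eq:Siegelbound} this is $\ll_A(\log N)^{-A}$. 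The bounded $2$-part of $\qsiegel$ is absorbed into the constant.

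The main obstacle is the case where two different Siegel characters appear simultaneously, namely $\widetilde\chi_1$ on $\psi_0$ and $\widetilde\chi_2$ on the $\psi_j$. We need to check that their product cannot conspire to make the completed local sums nondegenerate-free. At primes $p$ dividing $\qsiegel_1$ and $\qsiegel_2$ we must make sure the combined product of Legendre symbols is not a perfect square as a polynomial in $x$. This again follows from the non-proportionality of the forms. At primes dividing only one conductor the argument is as above. We also have to control the completion error $\operatorname{lcm}(e,\qsiegel)\leq \exp(O((\log N)^{1/4}))\ll N^{o(1)}$, which is negligible relative to $N$. Summing over $d\mid\mathcal D$ and $m$ then gives the bound $\tau(\mathcal D)MN(\log N)^{-A}$.
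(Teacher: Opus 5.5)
There is a genuine gap at the step where you assert that the affine forms inside the Legendre-symbol product at a prime $p\mid\qsiegel_1\qsiegel_2$ are pairwise non-proportional modulo $p$. For a fixed $m$ this is false in general. As functions of $n$, the forms $\psi_0(\mathcal D mn)$ and $\psi_j(dm,\tfrac{\mathcal D}{d}n)$ share a root modulo $p$ exactly when $p\mid \Delta^2a_jd^2m^2+\Delta dm-b_j$. Likewise, $\psi_i$ and $\psi_j$ share a root exactly when $p\mid\Delta dm(a_jb_i-a_ib_j)+b_i-b_j$. Suppose this happens for the forms selected by the characters, say $J=\{0,1\}$ with $p\mid\qsiegel_1$ and $p\mid\qsiegel_2$. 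Then the argument of the Legendre symbol is a constant times a square, so the local sum has size $\asymp p$ and there is no saving at $p$. The facts you cite ($a_ib_j\neq a_jb_i$, the choice of $\Delta$, $p\nmid\mathcal D$, $p\nmid m$) only show that the product polynomial $P_d(X)$ of these factors is not identically zero modulo $p$. Hence only $O_k(1)$ residue classes of $m$ modulo $p$ are bad, but this says nothing about a given $m$. For a fixed $m$, $\gcd(\qsiegel,P_d(m))$ can be a large part of $\qsiegel$, so the pointwise saving $\qsiegel^{-1/2+o(1)}$ you claim fails. Your hedge ``for all but a negligible set of $m$'' is never made precise. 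You would have to identify that set (in terms of $\gcd(\qsiegel,P_d(m))$) and show it is small, which needs an averaging argument over $m$ that the proposal does not contain. Relatedly, the case you flag as the main obstacle, two different Siegel characters, is not a real difficulty. It falls to the same Weil argument once this $m$-degeneracy is handled, and your proposed resolution of it again leans on the false non-proportionality claim.

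The missing idea is to keep the $m$-sum and use it. The paper notes that the twisted part $g_{\mathbf t}$ is $Q$-periodic in both $n$ and $m$. It then replaces the untwisted part $g_\perp$ by the constant $\mathfrak S^\perp_Q(m,d)$, by averaging over shifts $n+hQ$ and applying Lemma~\ref{lem_singular_series_estimate}, and completes in both variables modulo $Q$. At each $p$, the $O_k(1)$ roots of $P_d$ contribute $O_k(p)$, and the remaining $m$ give $p\cdot O(p^{1/2})$ by Weil, for a total of $p^{3/2}$ instead of $p^2$. Within your fixed-$m$ sieve framework you could instead proceed as follows. Bound the $m$-th term by $(\qsiegel/g_m)^{-1/2+o(1)}$ with $g_m=\gcd(\qsiegel,P_d(m))$. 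Then use $\#\{m\leq M: g\mid P_d(m)\}\ll MC^{\omega(g)}/g$, valid since $g\leq w_1\ll M$, to get $\sum_{m\leq M}(g_m/\qsiegel)^{1/2-o(1)}\ll M\qsiegel^{-1/2+o(1)}$. You would also need to handle primes of $\qsiegel$ that divide the sieve modulus $e$: there is no saving at such primes, but summing over $e$ still yields $\prod_{p\mid\qsiegel}(O(p^{-1/2})+O(1/p))$. With both points addressed, your sieve-truncation route becomes a workable alternative to the paper's periodicity-and-shift argument.
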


The main technical ingredient in the proof of Proposition~\ref{prop:sieve-bound} is a corollary of the fundamental lemma of sieve theory, Lemma~\ref{lem_singular_series_estimate}. The proof of Proposition~\ref{prop:weil-bound} relies on the same lemma,
but also crucially on classical Weil bounds for character sums. Both Propositions are proved in Section~\ref{sec_proof_of_sieve_and_weil_bound}.

\subsection{Step 4: Calculating the simplified main term}

The last step in the proof of Theorem~\ref{thm_main_asymptotic} is to show the following:

\begin{proposition}[Last main-term estimate]
\label{prop:mainfour}
We have
\[
S_{\mathrm{main}}^{(3)}\gg \frac{\tau(\mathcal{D})\varphi(W_1)MN}{W_1}.
\]
\end{proposition}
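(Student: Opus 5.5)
We need to lower bound
\[
S_{\mathrm{main}}^{(3)}=N\sum_{d\mid\mathcal D}\sum_{\substack{m\le M\\(m,W_1)=1}}\tfrac{m}{\varphi(m)}\Lambda(\Delta dm+1)\,\mathfrak S(m,d).
\]
The plan is to bound $\mathfrak S(m,d)$ from below, uniformly in all relevant $(m,d)$, by a constant times $\varphi(W_1)/W_1$. This reduces the problem to counting primes $\Delta dm+1$ with $m$ free of small prime factors. That count is handled by Bombieri--Vinogradov (or Siegel--Walfisz) in the variable $m$, together with a lower bound sieve for the coprimality condition.

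\emph{Local factors.} Fix $d\mid\mathcal D$ and $m$ with $(m,W_1)=1$ and $\Lambda(\Delta dm+1)\neq0$. For $w_2<p\le w_1$ we have $p\nmid \Delta^2\mathcal D m$, since $p\nmid m$, $p>B$, and the primes dividing $\mathcal D$ are $\ll(\log N)(\log\log N)<w_2$. Hence $\psi_0(\mathcal D m x)$ vanishes at exactly one residue class $x$, and $\beta_p=1$. For $p\le w_2$ we split into three cases.
\begin{itemize}
\item For $p\le B$ we have $p\mid\Delta$, so every $\psi_j\equiv1\pmod p$ and $\beta_p=(1-1/p)^{-k-1}\ge1$.
\item For $B<p\le w_2$ with $p\nmid\mathcal D$: $p\nmid m$ and $p\nmid d$. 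The forms $\psi_0$ and $\psi_j(dm,\tfrac{\mathcal D}{d}x)$ are affine in $x$ with nonzero slopes $\Delta^2\mathcal D m$ and $\Delta b_j\mathcal D/d$. Their roots are distinct modulo $p$ except possibly when some $\psi_j$ has a root coinciding with another form's root.
\item For $p\mid\mathcal D$: $\psi_0\equiv1$, and the forms $\psi_j$ with $p\mid \mathcal D/d$ are $\equiv \Delta a_j dm+1$. For these we need $p\nmid \Delta a_j dm+1$.
\end{itemize}
In every case at most $k+1$ residues are excluded. So $\beta_p\ge (1-(k+1)/p)(1-1/p)^{-k-1}=1+O(k^2/p^2)$ for $p>k+1$ (note $B\ge k+3$), provided at least one residue survives. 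The product over $B<p\le w_2$ is therefore $\gg1$.

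\emph{The delicate case.} The case $p\mid\mathcal D$, $p\mid d$, $p\nmid \mathcal D/d$ is fine: the count is at least $p-k$. The case $p\mid \mathcal D/d$ only requires $\Delta a_j dm\not\equiv -1\pmod p$. This is a genuine congruence condition on $m$ modulo the primes $p_1,\dots,p_\ell$. I would therefore restrict the sum over $m$ to a positive-proportion set of residue classes modulo $P=p_1\cdots p_\ell$ that avoid these bad classes and also make $\Delta dm+1$ coprime to $P$. There is at least one good class per prime, because $p>B\ge k+3$ leaves at least $p-k-2>0$ options. With this restriction, $\mathfrak S(m,d)\gg (C)^{-\ell}\gg1$, since $\ell$ is fixed, and
\[
\prod_{p\le w_1}\beta_p\cdot \tfrac{\varphi(W_1)}{W_1}\text{-normalization}
\]
gives the product $\gg1$. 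Since $\beta_p$ is already normalized by $p(1-1/p)$, the claimed factor $\varphi(W_1)/W_1$ must come from the density of $m$ coprime to $W_1$, not from $\mathfrak S$. Using $\Lambda\ge0$, $m/\varphi(m)\ge1$ and $\mathfrak S\gg1$ on the restricted set, I obtain
\[
S_{\mathrm{main}}^{(3)}\gg N\sum_{d\mid\mathcal D}\ \sum_{\substack{m\le M,\ (m,W_1)=1\\ m\equiv c_d\ (\mathrm{mod}\ P)}}\Lambda(\Delta dm+1).
\]

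\emph{Main obstacle: counting primes $\Delta dm+1$ with $(m,W_1)=1$ and $m$ in fixed classes modulo $P$, uniformly in $d\le\mathcal D=N^{o(1)}$.} The level $w_1=\exp((\log N)^{1/4})$ is small relative to $M$. My first attempt would be a fundamental-lemma sieve (Lemma~\ref{lem_singular_series_estimate}-type) on $m$, with weights $\lambda_e$ supported on $e\mid W_1$, $e\le M^{1/10}$. This converts the inner sum into $\sum_e\lambda_e\sum_{m\equiv0\,(e)}\Lambda(\Delta dm+1)$. Each inner sum counts primes $\equiv1\pmod{\Delta de P}$ with moduli $\le M^{1/5}$, and Bombieri--Vinogradov (Lemma~\ref{le:BV}) controls it on average over $e$.

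The catch is a possible Siegel zero for moduli up to $w_1$. The main term then carries a factor $1-\widetilde\chi_1(\cdot)(\cdot)^{\widetilde\beta_1-1}$. This is exactly why the $p_i$ avoid $Q$: residue classes modulo $P$ can be chosen independently of $\widetilde\chi_1$, and the Siegel correction is at most a factor $1-x^{\widetilde\beta-1}\ge$ bounded below for generic classes. Alternatively, summing over both choices of $\widetilde\chi_1(\Delta d m+1)$ gives a correction of size $\le 1$ with a bounded-away-from-$1$ average. Getting this Siegel-zero lower bound uniformly is the step I expect to be hardest. The result should be $\gg \frac{\varphi(W_1)}{W_1}M\cdot\frac{\Delta d}{\varphi(\Delta d)}\ge \frac{\varphi(W_1)}{W_1}M$ for each $d$, and summing over $d$ gives $\tau(\mathcal D)\varphi(W_1)MN/W_1$.
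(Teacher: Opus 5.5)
Your analysis of the local factors is fine. The gap is at the step that carries all the analytic weight: the lower bound $\sum_{m\le M,\,(m,W_1)=1}\Lambda(\Delta dm+1)\gg \frac{\varphi(W_1)}{W_1}M$ (with your extra conditions mod $P$), uniformly in $d\mid\mathcal D$. You leave this open, and the route you sketch cannot give it.

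For fixed $d$ your moduli are $\Delta deP$ at height $x\asymp dM$. Bombieri--Vinogradov, or any bound averaged over moduli, controls the total error only by $x(\log x)^{-A}\asymp dM(\log N)^{-A}$. The main term you need is $\asymp M\varphi(W_1)/W_1$. Since most $d\mid\mathcal D$ have size $\exp((\log N)^{\delta+o(1)})$, the error swamps the main term. What you need is a prime number theorem for the single, fairly large modulus $\Delta d$, which is exactly where an exceptional zero matters. Lemma~\ref{le:BV} does not apply either: it averages over moduli $mq$ with $(m,W)=1$, i.e.\ rough $m$, not over your $w_1$-smooth sieve moduli $e$.

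Your Siegel-zero remarks are not an argument. Where $\widetilde\chi(n)=1$, the factor $1-\widetilde\chi(n)n^{\widetilde\beta-1}\approx(1-\widetilde\beta)\log n$ can be arbitrarily small, so it is not bounded below on ``generic classes''. One must show that the exceptional term averages to $o(1)$ relative to the main term over $m\le M$ with $(m,W_1)=1$. Also, the choice $p_i\nmid Q$ plays no role here; it is used in Lemma~\ref{lem_cramer_approx_1} to ensure $p\nmid d$ for $p\mid\qsiegel_1\qsiegel_2$.

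The paper supplies this count as Proposition~\ref{prop_last}\ref{itm_last_iii}:
\begin{itemize}
\item It replaces $1_{(m,W_1)=1}$ by a type~I approximation and then replaces $\Lambda$ by the Siegel model $\widetilde\Lambda_{W_0}$ at level $w_0=\exp((\log N)^{1/3})$ in the progressions $1\bmod \Delta dr$, uniformly for moduli up to $w_0$, via \cite[Proposition~4.5]{MTW}.
\item The exceptional term becomes a sum of $\widetilde\chi_0(\Delta durtm+b)$ over $m$. This vanishes over complete periods unless $\qsiegel_0\mid\Delta du$.
\item Since $\qsiegel_0$ is squarefree apart from a $2$-part dividing $\Delta$, that case costs a factor $\ll\textnormal{rad}(d)/\qsiegel_0$. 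Here $\textnormal{rad}(d)\le\textnormal{rad}(\mathcal D)\ll(\log N)^{O(1)}$, while $\qsiegel_0\gg_A(\log N)^A$ by \eqref{eq:Siegelbound}.
\item The Cram\'er-model remainder is evaluated by \cite[Proposition~6.1]{MTW}.
\end{itemize}
So it is the condition $(m,W_1)=1$, together with the smallness of $\textnormal{rad}(d)$, that makes the Siegel zero harmless.

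On the local side, restricting $m$ to good classes mod $P=p_1\cdots p_\ell$ is legitimate in principle. But it forces you to redo this prime count with extra congruence conditions, and primes of $P$ may divide $\qsiegel_0$. Your displayed restriction to a single class $c_d$ also loses a factor $\varphi(P)$, which can be a power of $\log N$ when Siegel zeros push the $p_i$ up.

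The paper avoids all this by restricting instead to $d$ with $\textnormal{rad}(\mathcal D)\mid d$. Then $p\mid d$ for every $p\mid\mathcal D$, which kills $\Delta a_jdm$ modulo $p$, so $\mathfrak S(m,d)\gg1$ for all admissible $m$ (part~\ref{itm_last_i}). There are $D^\ell\asymp\tau(\mathcal D)$ such $d$. With positivity and $m/\varphi(m)\ge1$, the proposition follows at once from parts~\ref{itm_last_i} and~\ref{itm_last_iii}.
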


Observe that Propositions~\ref{prop:main}, \ref{prop:maintwo}, \ref{prop:mainthree}, and \ref{prop:mainfour} do indeed combine to yield a complete proof of Theorem~\ref{thm_main_asymptotic}.

The following proposition divides Proposition~\ref{prop:mainfour} into three smaller parts.

\begin{proposition}
\label{prop_last}
\begin{enumerate}
[label=(\roman{enumi}),ref=(\roman{enumi}),leftmargin=*]
\item\label{itm_last_i}
For all $m\leq M$ with $(m,W_1)=1$ and all $d\mid \mathcal{D}$ with $\textnormal{rad}(\mathcal{D})\mid d$, we have
\begin{align*}
\mathfrak{S}(m,d)\gg 1;    
\end{align*}
\item\label{itm_last_ii}
For all $m\leq M$ with $(m,W_1)=1$ we have
\[
0\leq \frac{m}{\varphi(m)}-1\ll \exp\bigl(-(\log N)^{1/5}\bigr);
\]
\item\label{itm_last_iii}
For every $d\mid\mathcal{D}$ we have the lower bound
\begin{equation}
\label{eqn_itm_last_iii}
\sum_{\substack{m\leq M\\ (m,W_1)=1}}  \Lambda(\psi_{-1}(dm)) \gg \frac{\varphi(W_1)M}{W_1}.
\end{equation}
\end{enumerate}
\end{proposition}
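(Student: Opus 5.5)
The three parts are independent: (i) is a prime-by-prime computation, (ii) is immediate from the size of $w_1$, and (iii) is a lower-bound sieve on shifted primes fed by Bombieri--Vinogradov.

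\emph{Part (i).} I would bound each local factor $\beta_p(m,d)$ from below, prime by prime, and show that $\prod_{p\leq w_1}\beta_p(m,d)\geq \prod_p(1-C/p^2)\gg 1$.
\begin{itemize}
\item If $p\mid\Delta$, then every $\psi_i$ is $\equiv 1\pmod p$. So all $x$ are admissible and $\beta_p=(1-1/p)^{-(k+1)}\geq 1$.
\item If $p\nmid \Delta$, then $p>B$. Hence $p\nmid a_jb_j$ and $p\geq k+4$. Also $p\nmid m$, since $(m,W_1)=1$ and $p\leq w_1$.
\item If $p\nmid\Delta$ and $p\mid\mathcal{D}$, then $\textnormal{rad}(\mathcal{D})\mid d$ forces $p\mid d$. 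Then $\psi_0(\mathcal{D}mx)\equiv 1$ and $\psi_j(dm,\tfrac{\mathcal{D}}{d}x)\equiv \Delta b_j\tfrac{\mathcal{D}}{d}x+1 \pmod p$. At most $k$ residues are excluded, so $\beta_p\geq (1-k/p)(1-1/p)^{-(k+1)}$. This is bounded below uniformly because $p\geq k+4$, and there are only $\ell=O(1)$ such primes.
\item If $p\nmid\Delta\mathcal{D}$ and $p\leq w_2$, then each of $\psi_0(\mathcal{D}mx)$ and $\psi_j(dm,\tfrac{\mathcal{D}}{d}x)$ is a nonconstant linear polynomial in $x$ modulo $p$. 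The relevant coefficients are $\Delta^2\mathcal{D}m$ and $\Delta b_j\mathcal{D}/d$, both nonzero mod $p$. So at most $k+1$ residues are excluded, and $\beta_p\geq (1-\tfrac{k+1}{p})(1-\tfrac1p)^{-(k+1)}=1-O(p^{-2})$.
\item If $w_2<p\leq w_1$, then $p\nmid \Delta\mathcal{D}m$, since $p_\ell\ll(\log N)\log\log N<w_2$. Exactly one residue is excluded, so $\beta_p=1$.
\end{itemize}
Multiplying these bounds gives $\mathfrak{S}(m,d)\gg1$.

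\emph{Part (ii).} We have $\tfrac{m}{\varphi(m)}=\prod_{p\mid m}(1-1/p)^{-1}\geq 1$. Every prime factor of $m$ exceeds $w_1$, and $m\leq M\leq N$, so $m$ has at most $\log N/\log w_1\leq(\log N)^{3/4}$ prime factors. Hence
\[
\log\tfrac{m}{\varphi(m)}\ll (\log N)^{3/4}/w_1\ll \exp(-(\log N)^{1/5}).
\]

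\emph{Part (iii)}, which I expect to be the main work. Put $q_0=\Delta d\leq \exp((\log N)^{\delta+o(1)})$. I would apply the fundamental lemma of sieve theory, \cite[Fundamental Lemma~6.3]{iw-kow}, with sifting range $w_1$ and level $M^{1/10}$. This gives $s=\log M^{1/10}/\log w_1\gg(\log N)^{3/4}$ and a relative error of $e^{-s}$. The sieve is applied to the sequence $m\mapsto\Lambda(q_0m+1)$, with the condition $p\nmid m$ for $p\leq w_1$.

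The required congruence sums are sums of $\Lambda(n)$ over $n\leq q_0M+1$ in a single class modulo $q_0r$, where $r\mid W_1$ and $r\leq M^{1/10}$. Since $q_0r\leq M^{1/10+o(1)}$, which is well below $M^{1/2}$, the Bombieri--Vinogradov variant Lemma~\ref{le:BV} (or the classical theorem) controls the sum of the remainders by $\ll M(\log N)^{-A}$. Averaging over moduli makes possible Siegel zeros harmless.

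The local densities work out as follows.
\begin{itemize}
\item For $p\mid q_0$, the condition $p\mid m$ has density $1/p$.
\item For $p\nmid q_0$, the condition $p\mid m$ means $q_0m+1\equiv1\pmod p$, which has density $1/(p-1)$.
\item Since $2\mid\Delta$, the prime $p=2$ falls in the first case, so no local factor vanishes.
\end{itemize}
The main term is therefore
\[
\frac{q_0M}{\varphi(q_0)}\prod_{\substack{p\leq w_1\\ p\mid q_0}}\Bigl(1-\frac1p\Bigr)\prod_{\substack{p\leq w_1\\ p\nmid q_0}}\Bigl(1-\frac1{p-1}\Bigr)\gg M\prod_{p\leq w_1}\Bigl(1-\frac1p\Bigr)\asymp \frac{\varphi(W_1)M}{W_1},
\]
because $\prod_{p\geq3}(1-\tfrac1{p-1})(1-\tfrac1p)^{-1}$ converges. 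The error terms $e^{-s}$ and $(\log N)^{-A}$ are negligible against this main term.

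The delicate point is to keep all constants uniform in $d\mid\mathcal{D}$. This works because $q_0$ is of size $N^{o(1)}$, so it does not disturb the Bombieri--Vinogradov range.
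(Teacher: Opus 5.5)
Your parts (i) and (ii) are correct and follow the paper's argument. Part (iii) takes a different route (fundamental lemma at level $M^{1/10}$ plus Bombieri--Vinogradov), and it breaks at the remainder estimate. Your congruence sums run over $n\leq x=q_0M+1$, $n\equiv 1\pmod{q_0r}$, with $q_0=\Delta d$. The classical theorem only gives
\[
\sum_{r}\Bigl|\sum_{\substack{n\leq x\\ n\equiv 1\pmod{q_0r}}}\Lambda(n)-\frac{x}{\varphi(q_0r)}\Bigr|\ll_A\frac{x}{(\log x)^A}\asymp\frac{q_0M}{(\log N)^A},
\]
while your main term is only $\asymp M\varphi(W_1)/W_1$. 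The case $d=\mathcal{D}$ is allowed, and Proposition~\ref{prop:mainfour} needs exactly the $d$ with $\textnormal{rad}(\mathcal{D})\mid d$. So $q_0$ can be $\exp((\log N)^{\delta+o(1)})$, and this bound is then worse than the trivial Brun--Titchmarsh bound.

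The problem is not the range of moduli but the normalisation. All your moduli are multiples of one fixed $q_0$ far beyond the Siegel--Walfisz range, so you need a saving relative to $x/\varphi(q_0)$. Bombieri--Vinogradov over all moduli cannot exclude that this sparse family is entirely bad; it says nothing even about the single term $r=1$.

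Lemma~\ref{le:BV} does not help either. Its varying moduli $m$ satisfy $(m,W)=1$ (they are $w$-rough) with one extra modulus $q\leq w$, whereas your $r\mid W_1$ are $w_1$-smooth and go up to $M^{1/10}$. Moreover, its output is a comparison with the Siegel model $\widetilde{\Lambda}_W$, which would still leave the Siegel term to evaluate.

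This is also why ``averaging over moduli makes possible Siegel zeros harmless'' fails here. Every modulus is a multiple of $q_0$, so nothing dilutes an exceptional real character whose conductor divides $q_0r$ for many $r$. Its term $-x^{\beta}/(\beta\varphi(q_0r))$ enters each of those sums with the same sign.

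The missing idea is that $\textnormal{rad}(d)\mid p_1\cdots p_\ell$ is only $(\log N)^{O(1)}$, while a Siegel conductor at the relevant level is $\gg_A(\log N)^A$ by \eqref{eq:Siegelbound}. Hence the exceptional contribution is damped by about $\textnormal{rad}(d)/\qsiegel$. The paper implements this in three steps:
\begin{enumerate}
\item It truncates the M\"obius expansion of $1_{(m,W_1)=1}$ at $r\leq w_1^{s_1}$, with $s_1=\frac{1}{10}(\log N)^{1/12}$, via Lemma~\ref{le:Lambdareplace}. Then each modulus satisfies $\Delta dr\leq w_0=\exp((\log N)^{1/3})$ and is handled individually by the Siegel-corrected prime number theorem in progressions \cite[Proposition~4.5]{MTW} (Claim~1).
\item It shows, by a periodicity and primitivity computation, that the Siegel correction at level $w_0$ contributes essentially $M\,\textnormal{rad}(d)/\qsiegel_0$ up to divisor and logarithmic factors (Claim~2).
\item It evaluates $\sum_{m\leq M}\Lambda_{W_1}(m)\Lambda_{W_0}(\Delta dm+1)$ with \cite[Proposition~6.1]{MTW}.
\end{enumerate}
Your local density computation for the main term is fine. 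To complete your route you would need either this individual-modulus treatment, or a Bombieri--Vinogradov theorem for moduli in $q_0\mathbb{N}$ with saving $1/\varphi(q_0)$, proved using the small radical of $q_0$ together with Siegel's theorem.
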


The proof of Proposition~\ref{prop_last} is in Section~\ref{sec_proof_last_estimate}.

\section{Preparatory results}

In this section, we prepare for the proofs of Propositions~\ref{prop_error_estimate_gowers_norm_bound},~\ref{prop:mainthree}, and~\ref{prop_error_estimate_bombieri-vinogradov_bound}. More precisely, we collect auxiliary lemmas used in the proof of these propositions. Some of the lemmas are quoted from the literature, while others are proved here.

Throughout, let $w\geq 2$ be real and set $W=\prod_{p\leq w}p$. Here $w$ should be regarded as a placeholder for either of the two parameters $w_1$ and $w_2$ introduced in \eqref{eq:w1choice} and \eqref{eq:w2choice}.
We also write $\widetilde{\beta}$ and $\widetilde{\chi}\pmod{\qsiegel}$ for the Siegel zero and the corresponding Siegel character of level $w$, whenever they exist, or let $\widetilde{\beta}=1$, $\widetilde{\chi}\equiv 0$, and $\qsiegel=1$ if no Siegel zero at this level exists, and consider the Siegel model
$\widetilde{\Lambda}_{W}(n)=
\Lambda_{W}(n)\bigl(1-\widetilde{\chi}(n)n^{\widetilde{\beta}-1}\bigr)$.
Our goal is to establish estimates for the Siegel model $\widetilde{\Lambda}_{W}$ and for the error term $\Lambda-\widetilde{\Lambda}_{W}$, including sieve estimates, Gowers norm estimates, and asymptotic formulas for averages along arithmetic progressions.

\subsection{An $L^1$ Type-I approximation of $\Lambda_R$}

We begin with a lemma showing that  $\Lambda_{R}$ is very close in $L^1$-norm to a type~I sum.
\begin{lemma}\label{le:Lambdareplace}
Let $w\geq 2$, $W=\prod_{p\leq w} p$, and $s\geq 100\log \log(2w)$.
Let $R\mid W$ and define
\begin{align}
\label{def_typeI-sum}
\Lambda_{R}^{(I)}(n)\coloneqq \frac{R}{\varphi(R)}\sum_{\substack{r\mid R \\ r\leq w^s}} \mu(r) 1_{r\mid n}.      
\end{align}   
Then for any $a,q\in\N$ with $(a,q)\leq e^{s/4}$, and any $X\geq \max\{w^{10s},q^{10/9}\}$, we have 
\begin{align*}
\sum_{\substack{n\leq X\\ n\equiv a \pmod q}}|\Lambda_{R}(n)-\Lambda_R^{(I)}(n)|\ll \frac{X}{q} e^{-s/12}.
\end{align*}    
\end{lemma}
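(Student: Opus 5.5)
The plan is to make the difference explicit and then reduce to counting elements of the progression whose $R$-part is large. Write $g=g(n)=(n,R)$. Since $\Lambda_R(n)=\frac{R}{\varphi(R)}\sum_{r\mid g}\mu(r)$, we have
\[
\Lambda_R(n)-\Lambda_R^{(I)}(n)=\frac{R}{\varphi(R)}\sum_{\substack{r\mid g\\ r>w^s}}\mu(r).
\]
If $g\le w^s$, every divisor of $g$ is $\le w^s$, so this difference vanishes. Hence
\[
|\Lambda_R(n)-\Lambda_R^{(I)}(n)|\le \frac{R}{\varphi(R)}\,\tau(g(n))\,1_{g(n)>w^s}.
\]
Mertens gives $R/\varphi(R)\ll \log w$. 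Applying Cauchy--Schwarz over $n\le X$, $n\equiv a\pmod q$, we get the bound
\[
(\log w)\Big(\sum 1_{g(n)>w^s}\Big)^{1/2}\Big(\sum \tau(g(n))^2\Big)^{1/2}.
\]
It therefore suffices to show two things. First, the number of such $n$ with $g(n)>w^s$ is $\ll \frac{X}{q}e^{-s/5}$. Second, $\sum\tau(g(n))^2\ll \frac{X}{q}(\log w)^{O(1)}$. Since $s\ge 100\log\log(2w)$, the $(\log w)^{O(1)}$ losses are then absorbed into $e^{-s/12}$.

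For the first count I will use a short-interval divisor. All prime factors of $g$ are $\le w$, so if $g>w^s$ then, multiplying its prime factors in one at a time, $g$ has a divisor $e$ with $z<e\le zw$, where $z=w^{s/3}$. So the count is at most
\[
\sum_{\substack{e\mid R\\ z<e\le zw}}\#\{n\le X:\ n\equiv a\ (q),\ e\mid n\}.
\]
The inner set is empty unless $(e,q)\mid a$. Since $e$ is squarefree and divides $R$, this forces $(e,q)\mid (a,q)$, hence $(e,q)\le e^{s/4}$. In that case the count is at most $\frac{X(e,q)}{qe}+1$. The $+1$ terms contribute at most $zw\le w^{s/3+1}$. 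The hypotheses $X\ge w^{10s}$ and $X\ge q^{10/9}$ give $X/q\ge X^{1/10}\ge w^s$, so this is $\ll \frac{X}{q}w^{-s/2}$, which is negligible. The main terms contribute at most $\frac{X}{q}e^{s/4}\sum_{e>z,\ P^+(e)\le w}\mu^2(e)/e$. Rankin's trick with exponent $1-\frac{1}{\log w}$ bounds this sum by
\[
z^{-1/\log w}\prod_{p\le w}\big(1+p^{-1+1/\log w}\big)\ll e^{-s/3}(\log w)^{O(1)}.
\]
The total is therefore $\ll \frac{X}{q}e^{-s/12}(\log w)^{O(1)}$. If that exponent is too tight, I would take $z=w^{s/2}$ instead, which makes the Rankin saving $e^{-s/2}$ and still keeps the $+1$ terms harmless.

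For the second moment, write $\tau(g)^2=\sum_{e\mid g}3^{\omega(e)}$, so that
\[
\sum\tau(g(n))^2\le\sum_{e\mid R}3^{\omega(e)}\,\#\{n\le X:\ n\equiv a\ (q),\ e\mid n\}.
\]
For $e\le w^{s/2}$ the same computation gives
\[
\frac{X}{q}\,e^{s/4}\sum_{e}3^{\omega(e)}/e\ \ll\ \frac{X}{q}\,e^{s/4}(\log w)^3 .
\]
The factor $e^{s/4}$ is too lossy here, so I would keep the arithmetic restriction $(e,q)\mid(a,q)$ explicitly. Writing $e=e_1e_2$ with $e_1\mid (a,q)$ and $(e_2,q)=1$, the $e_1$-sum is bounded by $\tau_3((a,q))$ rather than $(a,q)$. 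For $e>w^{s/2}$ I would not use divisibility counting at all. Instead I would bound those $n$ by the first step (large $R$-part), combined with the crude pointwise bound $\tau(g)\le 2^{\omega(R)}$, or better, run Rankin directly on the weighted sum $\sum_{e>w^{s/2}}3^{\omega(e)}(e,q)/e$.

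I expect the main obstacle to be the bookkeeping around $(a,q)$. The hypothesis $(a,q)\le e^{s/4}$ enters through the factor $(e,q)/e$, and it has to be spent against the Rankin saving of about $e^{-s/3}$ without any loss in the divisor-weighted moment. If Cauchy--Schwarz proves too lossy, the fallback is to bound $\tau(g)1_{g>w^s}$ directly by $\sum_{e\mid g,\ e>w^{s/2}}2^{\omega(g/e)}$ and to apply Rankin to the resulting double divisor sum.
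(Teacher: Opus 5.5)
\paragraph{Where the proposal is sound.} Your pointwise bound $|\Lambda_R(n)-\Lambda_R^{(I)}(n)|\le \frac{R}{\varphi(R)}\tau(g(n))1_{g(n)>w^s}$ is correct. So is your count of $n$ with large $g(n)$, via a divisor $e\in(z,zw]$ and Rankin's trick, with one adjustment. After Cauchy--Schwarz the saving in this count is square-rooted, so you need it to be noticeably better than $e^{-s/6}$. Hence $z=w^{s/3}$, which saves only $e^{-s/12}$, is not enough, and you must take $z=w^{s/2}$.

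\paragraph{The gap: the second moment.} You never establish a bound for $\sum_{n\le X,\ n\equiv a\pmod{q}}\tau((n,R))^2$. Expanding $\tau(g)^2=\sum_{e\mid g}3^{\omega(e)}$ produces moduli $e\mid R$ all the way up to $X$. Each modulus carries an $O(1)$ error in $\#\{n\le X: n\equiv a\pmod{q},\ e\mid n\}\le \frac{X(e,q)}{qe}+1$.
\begin{itemize}
\item Rankin's trick on $\sum_e 3^{\omega(e)}(e,q)/e$ controls only the main terms. The error terms add up to at least the number of squarefree $w$-smooth $e\le X$. For $R=W$, $X=w^{10s}$, $q\approx X^{9/10}$ and $s\asymp\log\log w$, that number is $w^{(10-o(1))s}$, while $X/q$ is only about $w^{s}$.
\item Combining the rarity of $g(n)>w^{s/2}$ with $\tau(g)\le 2^{\omega(R)}$ loses a factor up to $2^{\pi(w)}$. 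The saving is only $e^{-cs}$, and $s$ may be as small as $100\log\log(2w)$.
\item Your fallback double divisor sum $\sum_{e\mid g,\,e>w^{s/2}}2^{\omega(g/e)}$ runs into the same large-moduli problem once you try to count it.
\end{itemize}
What you actually need is an upper bound for a nonnegative multiplicative function, here $4^{\omega((n,R))}$, summed over an arithmetic progression with modulus up to $X^{9/10}$. Divisor counting alone does not give this. It is exactly Shiu's Brun--Titchmarsh theorem for multiplicative functions, which is the input the paper uses.

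\paragraph{How to close it.} Write $n=(a,q)n'$ with $n'\equiv a/(a,q)\pmod{q/(a,q)}$. Then $4^{\omega((n,R))}\le 4^{\omega((a,q))}4^{\omega((n',R))}$, and $4^{\omega((a,q))}=e^{O(s/\log s)}$. Some such factor is unavoidable, so your target $\frac{X}{q}(\log w)^{O(1)}$ has to carry it. It is harmless after the square root. Shiu's bound then gives $\sum 4^{\omega((n',R))}\ll \frac{X}{q}(\log w)^3$, and your Cauchy--Schwarz argument goes through.

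\paragraph{Comparison with the paper.} Once Shiu is available, the Cauchy--Schwarz step and the short-divisor count become unnecessary. The paper only uses that a divisor $r\mid R$ with $r>w^{s/2}$ has $\omega(r)>s/2$. So the number of such divisors of $(n',R)$ is at most $2^{\omega((n',R))}1_{\omega((n',R))>s/2}\le 2^{-s/2}4^{\omega((n',R))}$. A single application of Shiu's bound then finishes the proof.
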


\begin{proof}
By the triangle inequality it suffices to show that
\begin{align*}
\sum_{\substack{n\leq X\\ n\equiv a \pmod q}}\,\frac{R}{\varphi(R)}\sum_{\substack{r\mid R\\ r>w^s}}1_{r\mid n}\ll \frac{X}{q}e^{-s/12}.    
\end{align*}
Let $g=(a,q)$. Then any $n\equiv a\pmod q$ can be written as $n=gn'$ with $n'\equiv a/g\pmod{q/g}$. For each $r\mid(n,R)$, let $g'=(r,g)$ and $r'=r/g'$. Since $(r',g/g')=1$ and $r'\mid(g/g')n'$, we have $r'\mid n'$. Moreover, $g'\mid(g,R)$, so for each $r'$ there are at most $\tau((g,R))$ possible values of $g'$. Finally, $g'\leq g\leq e^{s/4}\leq w^{s/2}$, so $r>w^s$ implies $r'>w^{s/2}$. Since $R/\varphi(R)\ll\log w\ll e^{s/100}$, it suffices to show that
\begin{align*}
\sum_{\substack{n'\leq X/g\\ n'\equiv a/g \pmod{q/g}}}\,\sum_{\substack{r'\mid R\\ r'>w^{s/2}}}1_{r'\mid n'}\cdot \tau((g,R))\ll \frac{X}{q}e^{-s/10}.    
\end{align*}
Since $r'\mid R$ and $r'>w^{s/2}$ imply $\omega((r',R))>s/2$, for any $n'\in \mathbb{N}$ we have
\begin{align*}
\sum_{\substack{r'\mid R\\ r'>w^{s/2}}}1_{r'\mid n'}\ll 2^{\omega((n',R))}1_{\omega((n',R))>s/2}\leq 2^{-s/2}4^{\omega((n',R))}.    
\end{align*}
Hence, using $\tau((g,R))\leq \tau(g)\ll g^{1/100}$, it suffices to show that
\begin{align*}
\sum_{\substack{n'\leq X/g\\ n'\equiv a/g \pmod{q/g}}}\, 4^{\omega((n',R))}\ll\frac{X}{q}2^{s/2}e^{-s/8}. 
\end{align*}
Since $(a/g,q/g)=1$, $R\mid W$ and $q/g\leq (X/g)^{99/100}$, Shiu's bound~\cite[Theorem 1]{shiu}  then shows that the left-hand side is
\begin{align*}
\ll \frac{X/g}{\varphi(q/g)}\frac{1}{\log X}\exp\left(\sum_{\substack{p\leq X\\p\nmid q/g}}\frac{4^{\omega((p,R))}}{p}\right)\ll\frac{X/g}{q/g}(\log (2w))^3\ll \frac{X}{q}e^{s/10},    
\end{align*}
and the claim follows.
\end{proof}

\subsection{Local Gowers norm bounds for the Siegel-corrected von Mangoldt function}
\label{prelims_uniformity_norms}

For a prime $p$, a function $f\colon\Z/p\Z\to\mathbb C$, and
$s\in\N$, we define the degree $s$ Gowers norm of $f$ by
\[
\|f\|_{U^s(\Z/p\Z)}
\coloneqq
\left(
\mathbb E_{x,h_1,\ldots,h_s\in\Z/p\Z}
\prod_{\omega\in\{0,1\}^s}
\mathcal C^{|\omega|}
f\bigl(x+\omega\cdot(h_1,\ldots,h_s)\bigr)
\right)^{1/2^s},
\]
where $\mathcal C(z)=\overline z$, $|\omega|=\omega_1+\cdots+\omega_s$,
and $\omega\cdot(h_1,\ldots,h_s)=\omega_1h_1+\cdots+\omega_sh_s$. The same definition applies to $\Z/q\Z$ for every integer $q\geq1$.
For a finitely supported function $f\colon\Z\to\mathbb C$, write
\[
\|f\|_{\widetilde U^s(\Z)}^{2^s}
\coloneqq
\sum_{x,h_1,\ldots,h_s\in\Z}
\prod_{\omega\in\{0,1\}^s}
\mathcal C^{|\omega|}
f\bigl(x+\omega\cdot(h_1,\ldots,h_s)\bigr).
\]

For a discrete interval $I=x+[N]=\{x+1,x+2,\ldots,x+N\}$, choose a prime $p$ satisfying $p>2^{s+1}N$.
Given a function $f\colon\N\to\mathbb{C}$, define $f_{I,p}\colon\Z/p\Z\to\mathbb C$ by
\[
f_{I,p}(r)
=
\begin{cases}
f(x+r), & \text{if}~1\leq r\leq N\\
0, & \text{otherwise}.
\end{cases}
\]
We define
\[
\|f\|_{U^s(I)}
\coloneqq
\frac{\|f_{I,p}\|_{U^s(\Z/p\Z)}}
{\|1_{[N]}\|_{U^s(\Z/p\Z)}}.
\]
The condition $p>2^{s+1}N$ ensures that the additive cubes contributing
to these Gowers norms do not wrap around modulo $p$. In particular,
the above definition is independent of the choice of $p$.
When $I=[N]$, we write $\|f\|_{U^s[N]}$ instead of $\|f\|_{U^s(I)}$. For real $X\geq0$ and $H\geq1$, we interpret $\|f\|_{U^s(X,X+H]}$ as the norm on $(X,X+H]\cap\mathbb N=\lfloor X\rfloor+[\lfloor X+H\rfloor - \lfloor X\rfloor]$.

The following result provides estimates on the Gowers norm of $\Lambda-\widetilde{\Lambda}_W$ over short intervals and plays a central role in the proof of our main result. It is established in a companion-paper to this paper, see~\cite{Teravainen-MSTT}. 

\begin{theorem}[Local Gowers norm bound for $\Lambda-\widetilde{\Lambda}_W$]
\label{lem_local_uniformity_norm_bound_for_siegel-corrected_vonMangoldt}
Fix an integer $s\geq2$ and $0<\eps<3/16$, and set $a_s=2^{-s-2}$.
There is $\gamma=\gamma(s,\eps)>0$ such that the following holds.
Let $X\geq3$, $H\geq1$, $w\geq2$, and define $W=\prod_{p\leq w}p$. If
\begin{equation*}
 X^{5/8+\eps}\leq H\leq X
\qquad
\text{and}
\qquad
w\leq\exp\bigl((\log X)^{1/10}\bigr),
\end{equation*}
then
\begin{equation*}
 \|\Lambda-\widetilde{\Lambda}_W\|_{U^s(X,X+H]}
 \ll_{s,\eps}w^{-a_s}+\exp\bigl(-(\log X)^\gamma\bigr).
\end{equation*}
\end{theorem}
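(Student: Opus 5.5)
The plan is to reduce the bound to a statement about correlations with nilsequences, and then split $\Lambda-\widetilde{\Lambda}_W$ through an intermediate model. Concretely, I would introduce a Heath-Brown-type approximant $\Lambda^\sharp$ with a Siegel correction built in: a type~I sum $\sum_{r\mid n,\, r\leq X^{\eta}}\mu(r)\log(\cdot)$ truncated at a small power of $X$, modified by the factor $(1-\widetilde{\chi}(n)n^{\widetilde{\beta}-1})$ relative to the Siegel zero of level $\exp((\log X)^{c})$. By the triangle inequality for $\|\cdot\|_{U^s(X,X+H]}$ it then suffices to bound two quantities:
\[
\|\Lambda-\Lambda^\sharp\|_{U^s(X,X+H]}
\qquad\text{and}\qquad
\|\Lambda^\sharp-\widetilde{\Lambda}_W\|_{U^s(X,X+H]} .
\]

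For the first term I would use the short-interval inverse theorem for the Gowers norm. This reduces the task to showing that $\Lambda-\Lambda^\sharp$ has correlation $\ll \exp(-(\log X)^{\gamma})$ with every nilsequence of bounded complexity on $(X,X+H]$. Next I would apply Heath-Brown's identity to decompose $\Lambda$ into type~I, type~II and type~$I_2$ sums.
\begin{itemize}
\item \textbf{Type~I and $I_2$ pieces.} These are handled by equidistribution of polynomial nilorbits on progressions, using the quantitative Leibman/Green--Tao theorem. The contribution of small moduli is matched exactly by $\Lambda^\sharp$.
\item \textbf{Type~II pieces.} For these I would use Cauchy--Schwarz together with the bilinear nilsequence estimates of Matom\"aki--Radziwi\l\l--Tao--Ter\"av\"ainen--Ziegler.
\end{itemize}
The threshold $H\geq X^{5/8+\eps}$ is exactly what makes the combinatorial decomposition avoid the problematic ranges. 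The Siegel correction is what allows the major-arc (low-modulus character) contribution to be bounded with a quasi-polynomial saving $\exp(-(\log X)^{\gamma})$ instead of an ineffective one. The reason is that, once the exceptional character is removed, Page's theorem gives zero-free regions for all other characters of conductor up to $\exp((\log X)^{c})$.

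For the second term, both functions are essentially type~I objects with smooth twists. I would first use Lemma~\ref{le:Lambdareplace} to replace $\Lambda_W$ by the type~I sum $\Lambda_W^{(I)}$, at an $L^1$ cost $e^{-s/12}$. The Gowers norm is controlled by the $L^1$ norm up to a power, since all functions involved are bounded by divisor-type majorants. Then $\Lambda^\sharp-\widetilde{\Lambda}_W$ becomes a linear combination of sums of the form $\sum_r c_r 1_{r\mid n}(1-\widetilde{\chi}(n)n^{\widetilde{\beta}-1})$. The factor $n^{\widetilde{\beta}-1}$ is essentially constant on $(X,X+H]$, so the Gowers norm can be computed locally modulo the periods. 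In the resulting Euler product, the primes $p\leq w$ contribute matching local factors on both sides. What survives is the discrepancy coming from primes $p>w$, namely the factors $\Lambda_p$ that are absent from $\widetilde{\Lambda}_W$. A standard computation of the $U^s$ norm of $\prod_{w<p\leq z}\Lambda_p-1$ on $\mathbb{Z}/P\mathbb{Z}$ bounds this by $\ll w^{-a_s}$ with $a_s=2^{-s-2}$, since each factor contributes $1+O(p^{-2})$ to the $2^s$-th power.

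The main obstacle is the first term, specifically the type~II nilsequence estimates in short intervals with quasi-polynomial savings uniformly in the Siegel-corrected setting. I would import these essentially verbatim from the MRTTZ short-interval machinery. The only new work would be to verify that the Siegel correction at level $w\leq\exp((\log X)^{1/10})$ removes the only obstruction to replacing their qualitative bounds with $\exp(-(\log X)^{\gamma})$.
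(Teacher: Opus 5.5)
\textbf{There is a genuine gap in your inverse-theorem step.} The paper does not prove this statement; it imports it from the companion paper \cite{Teravainen-MSTT}, so your outline has to stand on its own. Its architecture is reasonable: a Siegel-corrected model, an inverse theorem, a Heath-Brown decomposition into type~I, I$_2$ and II sums whose admissible ranges force $H\geq X^{5/8+\eps}$, and a separate local computation producing $w^{-a_s}$. However, the step meant to deliver the rate $\exp(-(\log X)^{\gamma})$ fails as stated.

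You reduce to showing that $\Lambda-\Lambda^\sharp$ has correlation $\ll\exp(-(\log X)^{\gamma})$ with every nilsequence of \emph{bounded complexity}. That yields no rate at all. The Green--Tao--Ziegler inverse theorem is qualitative. The available quantitative ones (Manners, Leng--Sah--Sawhney) output nilsequences whose dimension and complexity grow as the target bound $\delta$ for the Gowers norm shrinks. This is why Matom\"aki--Shao--Tao--Ter\"av\"ainen get only $o(1)$ for the short-interval Gowers norm, although their nilsequence estimates save any power of $\log X$. It is also why Manners' bounds lead only to $(\log\log X)^{-c}$-type results.

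To reach $\delta=\exp(-(\log X)^{\gamma})$ you need the following:
\begin{enumerate}
\item The quasi-polynomial inverse theorem of Leng--Sah--Sawhney. It produces nilmanifolds of dimension $(\log X)^{O(\gamma)}$ and complexity $\exp((\log X)^{O(\gamma)})$.
\item Type~I, I$_2$ and II estimates that hold uniformly over that whole family of nilsequences.
\item For item 2, an equidistribution theorem for polynomial nilsequences with only polynomial dependence on the dimension in the exponent, as in Leng's efficient equidistribution theorem. The Green--Tao quantitative Leibman theorem degrades far too fast in the dimension.
\end{enumerate}
So your claim that the Siegel correction removes ``the only obstruction'' is not right. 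It removes the ineffectivity from the exceptional character in the major-arc analysis. The inverse theorem and the dimension dependence of equidistribution are separate bottlenecks, and the MSTT estimates cannot be imported verbatim. Note also that MRTTZ concerns almost all intervals; the all-interval estimates you need are those of Matom\"aki--Shao--Tao--Ter\"av\"ainen.

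\textbf{Your comparison step is only heuristic.} There are three problems.
\begin{enumerate}
\item A Heath-Brown type~I sum with moduli up to $X^{\eta}$ has no Euler-product structure. Its $U^s$ correlations are therefore not ``computed locally modulo the periods''. You would need a Goldston--Y{\i}ld{\i}r{\i}m-type contour argument with quasi-polynomial error terms.
\item Your $\Lambda^\sharp$ uses the exceptional zero of level $\exp((\log X)^{c})$, while $\widetilde{\Lambda}_W$ uses that of level $w$. When the former exists and the latter does not, you must separately show that $\Lambda_W(n)\widetilde{\chi}(n)n^{\widetilde{\beta}-1}$ has small $U^s$ norm.
\item The weight $n^{\widetilde{\beta}-1}$ is not constant on $(X,X+H]$ to accuracy $w^{-a_s}$ when $H\asymp X$; its oscillation can be of order $1/\log X$. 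You have to remove it by partial summation together with $\|f\mathbf{1}_J\|_{U^s}\ll(\log H)\|f\|_{U^s}$. That bound follows from the invariance of $U^s$ under linear phases for $s\geq 2$.
\end{enumerate}

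It is simpler to bypass $\Lambda^\sharp$. Apply the inverse theorem directly to the $O(\log X)$-bounded function $\Lambda-\widetilde{\Lambda}_{W^\ast}$ with $w^\ast=\exp((\log X)^{1/10})$; normalising by $\log X$ is harmless at quasi-polynomial precision. Then compare $\widetilde{\Lambda}_{W^\ast}$ with $\widetilde{\Lambda}_W$ separately. After truncating both to type~I sums via Lemma~\ref{le:Lambdareplace}, this is a computation with periods $X^{o(1)}$. There your $1+O(p^{-2})$ local-factor argument genuinely applies, plus a character-sum bound for the exceptional term.
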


\subsection{A Variant of Bombieri--Vinogradov with Siegel correction}

The following lemma provides a Bombieri--Vinogradov type estimate where we have an additional congruence, possibly to a moderately large modulus, but we work with $\Lambda-\widetilde{\Lambda}_W$.

\begin{lemma}[Bombieri--Vinogradov theorem with additional congruence]\label{le:BV}
There is an absolute constant $C_0\geq 1$ such that the following holds.
Let $N\geq 2$ and let $q\geq 1$ be an integer, and let $w\geq 2$ be real with $q\leq w\leq \exp((\log N)^{1/3})$. Also let $1\leq M'\leq N/q^5$. Then
\begin{align}\label{eq:BV}\begin{aligned}
\sum_{\substack{m\leq M'\\ (m,W)=1}}
~\max_{\substack{b\pmod {mq}\\ (b,m)=1}}
~\max_{1\leq y\leq M'N} 
&\left|
\sum_{\substack{n\leq y\\ n\equiv b\pmod {mq}}} 
\left(\Lambda(n)-\frac{m}{\varphi(m)}\widetilde{\Lambda}_W(n)\right)
\right|
\\
&\ll 
M'N\left(\left(\frac{M'}{N}\right)^{1/6}+w^{-1/3}\right)(\log N)^{C_0}.
\end{aligned}
\end{align}
\end{lemma}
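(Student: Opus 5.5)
We reduce Lemma~\ref{le:BV} to the classical Bombieri--Vinogradov theorem, combined with the explicit description of primes in progressions to small moduli given by the Siegel model. Fix $m$, $b$, $y$ and write $Q_m=mq$. Since $(m,W)=1$ and $q\leq w$, every prime factor of $m$ exceeds $w\geq q$. Choose a threshold $K=w^{c}$ for some small $c>0$; by \eqref{eq:Siegelbound} the Siegel character has conductor below this threshold. Decompose
\[
\sum_{\substack{n\leq y\\ n\equiv b\ (mq)}}\Lambda(n)
=\frac{1}{\varphi(mq)}\sum_{\chi\pmod{mq}}\overline{\chi}(b)\sum_{n\leq y}\Lambda(n)\chi(n),
\]
when $(b,mq)=1$. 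If $(b,q)>1$ but $(b,m)=1$, then the left-hand side is supported on prime powers dividing $q$, and we can handle this case trivially. We group the characters by the primitive characters inducing them. The contribution of primitive characters of conductor $>K$ we bound by the large-sieve form of Bombieri--Vinogradov (Vaughan's identity together with the multiplicative large sieve, summing over $m\leq M'$ with the extra factor $q$ absorbed since $M'q\leq (M'N)^{1/2}\cdot(M'/N)^{\dots}$). This gives a saving of roughly $\bigl((M'q)/N^{1/2}\cdots\bigr)+K^{-1/2}$ times $(\log N)^{C_0}$. Choosing $K$ appropriately yields the terms $(M'/N)^{1/6}$ and $w^{-1/3}$; the hypothesis $M'\leq N/q^5$ is used to absorb the powers of $q$.

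For primitive characters of conductor $\leq K$, the character is induced from a modulus $r\mid mq$ with $r\leq K<$ every prime factor of $m$, except when $r$ is built from primes of $q$ alone. Thus $r\mid q$ times a part coprime to small primes, which is impossible unless $r\mid q$, so these characters are essentially characters mod $q$. By Page's theorem and the zero-free region (Siegel--Walfisz with an explicit exceptional zero), for $y\leq M'N$ we have
\[
\sum_{n\leq y}\Lambda(n)\chi(n)=1_{\chi=\chi_0}\,y-1_{\chi=\widetilde{\chi}\chi_0}\frac{y^{\widetilde\beta}}{\widetilde\beta}+O\bigl(y\exp(-c'\sqrt{\log y})\,\text{or }yK^{-A}\bigr),
\]
where the error is acceptable since $w\leq\exp((\log N)^{1/3})$. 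Summing these terms gives the main term $\frac{y}{\varphi(mq)}\bigl(1-\widetilde{\chi}(b)\,\cdot\,\text{avg of } n^{\widetilde\beta-1}\bigr)$ restricted to $n\equiv b\ (q)$.

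Separately, we compute $\sum_{n\leq y,\,n\equiv b\,(mq)}\frac{m}{\varphi(m)}\widetilde{\Lambda}_W(n)$. We split the congruence modulo $m$ and modulo $\mathrm{lcm}(q,W)$-type structure. Because $(m,W)=1$, the conditions $n\equiv b\ (m)$ and the mod-$W$ periodic weight $\Lambda_W(n)(1-\widetilde\chi(n)n^{\widetilde\beta-1})$ are independent, with $\widetilde\chi$ having modulus dividing $W$. Partial summation for $n^{\widetilde\beta-1}$ then gives $\frac{m}{\varphi(m)}\cdot\frac1m\cdot\frac{q}{\varphi(q)}\frac1q\bigl(y-\widetilde\chi(b)y^{\widetilde\beta}/\widetilde\beta\bigr)$, up to an error of size $O(W)$ per progression. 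We need a care point here: the trivial error $W$ is too big. To avoid it, we replace $\Lambda_W$ by the type-I approximation $\Lambda_W^{(I)}$ of Lemma~\ref{le:Lambdareplace} with $s$ around $(\log w)^{1/2}$ or a power of $\log N$, making the error $w^{s}$ per progression plus an $L^1$ loss $e^{-s/12}y/(mq)$. The result matches the previous main term, since $\frac{m}{\varphi(m)}\frac{1}{m\varphi(q)}=\frac{1}{\varphi(mq)}$.

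The main obstacle is the middle step: controlling uniformly the characters of conductor between the Siegel-relevant range and $K$, and verifying that no small-conductor character other than those mod $q$ arises. This relies on every prime of $m$ exceeding $w$, together with the fact that $\widetilde\chi$ is the only exceptional character. We would first try the Gallagher/log-free zero-density large-sieve bound to get the $w^{-1/3}$ saving uniformly in $m$.
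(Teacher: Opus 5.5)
\textbf{The gap is in where you cut the characters modulo $mq$.} You take $K=w^{c}$ with $c$ small and justify it by \eqref{eq:Siegelbound}. But \eqref{eq:Siegelbound} is a \emph{lower} bound for $\widetilde{q}$, so it does not place $\widetilde{q}$ below $w^{c}$. Worse, the large-sieve form of Bombieri--Vinogradov over conductors in $(K,M'q]$ saves only $K^{-1}=w^{-c}$ (your $K^{-1/2}$ is worse still), which never reaches $w^{-1/3}$. So as written the argument does not prove the lemma. The ``middle range'' you flag as the main obstacle, to be attacked with log-free zero-density estimates, does not exist.

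The missing observation, which is exactly what the paper uses, is that the cut belongs at $w$ itself. If $\chi \pmod{mq}$ is induced by a primitive character of conductor $r'\nmid q$, then $r'$ is divisible by a prime factor of $m$, and $(m,W)=1$ forces $r'>w$. Conversely, every conductor $\leq w$ divides $q$. So the characters split cleanly into two classes:
\begin{enumerate}
\item \emph{Characters coming from $\pmod q$.} Their total contribution is exactly $\frac{1}{\varphi(m)}\sum_{n\leq y,\ n\equiv b\ (q),\ (n,m)=1}\Lambda(n)$. This is handled uniformly in $m$ by Landau--Page modulo $q$. The exceptional zero there is either $\widetilde{\beta}$, which happens precisely when $\widetilde{q}\mid q$, or it satisfies $\beta_q<1-\delta_0/\log w$ and is negligible.
\item \emph{Characters of conductor $r'\in(w,M'q]$.} Write $r'=rr_2$ with $r\mid m$ and $r_2\mid q$. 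Each primitive $\chi^{*}\pmod{r'}$ occurs with total weight $\sum_{s\leq M'/r}\varphi(rsq)^{-1}\ll (\log N)/\varphi(r')$. The dyadic form of the large-sieve bound,
\[
\sum_{w<r'\leq R}\frac{1}{\varphi(r')}\sideset{}{^{*}}\sum_{\chi\bmod r'}\max_{y\leq X}|\psi(y,\chi)|\ll\Bigl(\frac{X}{w}+X^{5/6}+X^{1/2}R\Bigr)(\log X)^{5},
\]
with $X=M'N$ and $R=M'q$, then gives $X\bigl(w^{-1}+N^{-1/6}+(M'/N)^{1/2}q\bigr)(\log X)^{6}$. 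The hypothesis $q^{5}\leq N/M'$ turns $(M'/N)^{1/2}q$ into $(M'/N)^{3/10}$.
\end{enumerate}
Also fix the type-I truncation. With $s\approx(\log w)^{1/2}$, the $L^1$ loss $e^{-s/12}$ is far larger than $w^{-1/3}$. You need $s\gg\log w$ (the paper takes $s_0=1000\log w$), while keeping $w^{s}=N^{o(1)}$.

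\textbf{Comparison with the paper once repaired.} With the cut at $w$, your route is valid but packaged differently. The paper never decomposes modulo $mq$. It isolates only the non-principal characters modulo $m$ through $D_{q,b,F_m}(Y;m,b)$ and keeps $n\equiv b\pmod q$ as a congruence. It applies Vaughan's identity to the whole difference $F_m=\Lambda-\frac{m}{\varphi(m)}\widetilde{\Lambda}_W^{(I)}$, with the model being a type I sum handled by counting. In the type II part it detects $uv\equiv b\pmod q$ by characters modulo $q$ and runs the large sieve by hand over primitive characters of conductor $r>w$, after removing $uv\leq Y$ with multiplicative boxes; that box count is where $(M'/N)^{1/6}$ and $w^{-1/3}$ come from. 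Your version instead quotes classical Bombieri--Vinogradov for $\Lambda$ alone and evaluates the model directly in classes modulo $mq$. It is shorter and yields slightly better exponents. The paper's argument is self-contained and avoids the bookkeeping over conductors $rr_2$ with $r_2\mid q$.
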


\begin{proof}
Throughout the proof, $C_0$ is a large constant that may vary from line to line. Let
\begin{align*}
        X\coloneqq M'N,\qquad \theta\coloneqq \frac{M'}{N}.
\end{align*}

First consider residue classes with $(b,q)>1$. Since $q\leq w$,
every integer $n\equiv b\pmod {mq}$ is divisible by some prime $p\leq w$,
so $\widetilde{\Lambda}_W(n)=0$ on this progression and $\Lambda(n)$ can be nonzero only when $n$ is a power of $p$.
Thus
\begin{align*}
\sum_{\substack{n\leq y\\ n\equiv b\pmod {mq}}}
\left(\Lambda(n)-\frac{m}{\varphi(m)}\widetilde{\Lambda}_W(n)\right)
\ll (\log X)^2.
\end{align*}
After summing over $m\leq M'$, this is
\begin{align*}
        \ll M'(\log X)^2
        \ll M'Nw^{-1/3},
\end{align*}
say.
We may therefore assume from now on that $(b,q)=1$, and hence, since $(b,m)=1$, that $(b,mq)=1$.

Recall that
\[
\widetilde{\Lambda}_W(n)=\Lambda_{W}(n)\bigl(1-\widetilde{\chi}(n)n^{\widetilde{\beta}-1}\bigr)
\]
and define
\[
\widetilde{\Lambda}_W^{(I)}(n)\coloneqq \Lambda_{W}^{(I)}(n)\bigl(1-\widetilde{\chi}(n)n^{\widetilde{\beta}-1}\bigr),
\]
where $\Lambda_{W}^{(I)}(n)$ is as in \eqref{def_typeI-sum} with 
$$
s_0=1000\log w.
$$
Lemma~\ref{le:Lambdareplace}, applied at $X$, gives for $y\leq X$ the bound
\begin{align*}
\sum_{\substack{n\leq y\\ n\equiv b\pmod{mq}}}\bigl|\widetilde{\Lambda}_W(n)-\widetilde{\Lambda}_W^{(I)}(n)\bigr|
&\ll\sum_{\substack{n\leq X\\ n\equiv b\pmod{mq}}}\bigl|\Lambda_W(n)-\Lambda_W^{(I)}(n)\bigr|\\
&\ll \frac{X}{mq}e^{-s_0/12}.
\end{align*}
Multiplying by $m/\varphi(m)$ and summing over $m\leq M'$ gives
\[
\ll\frac{X}{q}e^{-s_0/12}\sum_{m\leq M'}\frac1{\varphi(m)}\ll Xw^{-1/3}(\log N)^{C_0}.
\]
Hence, it remains to prove~\eqref{eq:BV} with $\widetilde{\Lambda}_W$ replaced by
$\widetilde{\Lambda}_W^{(I)}$.

For positive integers $m$, define
\begin{align*}
F_m(n) \coloneqq
\Lambda(n)-\frac{m}{\varphi(m)}\widetilde{\Lambda}_W^{(I)}(n).
\end{align*}
Also, for a function $f\colon\mathbb N\to\mathbb C$, define
\begin{align*}
D_{q,b,f}(Y;m,a) \coloneqq \sum_{\substack{n\leq Y\\ n\equiv b\pmod q\\ n\equiv a\pmod m}}f(n)
- \frac{1}{\varphi(m)}
\sum_{\substack{n\leq Y\\ n\equiv b\pmod q\\ (n,m)=1}}f(n).
\end{align*}
The congruence $n\equiv b\pmod {mq}$ is equivalent to
$n\equiv b\pmod q$ and $n\equiv b\pmod m$, so
\begin{align}\label{eq:BV-split}
\sum_{\substack{n\leq Y\\ n\equiv b\pmod {mq}}}F_m(n)
=D_{q,b,F_m}(Y;m,b)
+\frac{1}{\varphi(m)}
\sum_{\substack{n\leq Y\\ n\equiv b\pmod q\\ (n,m)=1}}F_m(n).
\end{align}

We first estimate the second term in \eqref{eq:BV-split}. Since
$\Lambda$ is supported on prime powers and $(m,q)=1$, removing the
condition $(n,m)=1$ from the sum of $\Lambda$ over $n\equiv b\pmod q$
only adds powers of primes dividing $m$. Hence
\begin{align}\label{eq:Lambda-coprime-m}
\sum_{\substack{n\leq Y\\ n\equiv b\pmod q\\ (n,m)=1}}\Lambda(n)
=\sum_{\substack{n\leq Y\\ n\equiv b\pmod q}}\Lambda(n)
+O(\omega(m)(\log X)^2).
\end{align}
Suppose first that $Y>X/w^2$. Applying the Landau--Page theorem in the form of~\cite[Theorem~5.27]{iw-kow}, we see that for some small absolute constant $c>0$ we have
\begin{align}\label{eq:LP-q}
\sum_{\substack{n\leq Y\\ n\equiv b\pmod q}}\Lambda(n)
=\frac{Y}{\varphi(q)}-\frac{\chi_q(b)}{\varphi(q)}
\frac{Y^{\beta_q}}{\beta_q}
+O\left(Y\exp\left(-c\frac{\log Y}{\sqrt{\log Y}+\log q}\right)(\log (2q))^4\right),
\end{align}
where $\chi_q$ is a real character $\pmod q$, $\beta_q>1-\frac{c}{\log(2q)}$, and $L(\beta_q,\chi_q)=0$. If $\chi_q$ does not exist, we delete the term involving $\chi_q$ and $\beta_q$.

The error term in~\eqref{eq:LP-q} is $O(Xw^{-1})$ since $q\leq w$ and $\log Y\asymp \log X\asymp \log N$, so
\begin{align}\label{eq:LP-q2}
\sum_{\substack{n\leq Y\\ n\equiv b\pmod q}}\Lambda(n)
=\frac{Y}{\varphi(q)}-\frac{\chi_q(b)}{\varphi(q)}
\frac{Y^{\beta_q}}{\beta_q}
+O(Xw^{-1}),
\end{align}
The same holds also for $Y\leq X/w^2$, since in that case we can crudely use Chebyshev's bound to obtain the same conclusion. If $\chi_q$ exists and
\begin{align}\label{eq:non-siegel-zero}
\beta_q<1-\frac{\delta_0}{\log w},
\end{align}
then the exceptional term in~\eqref{eq:LP-q2} is also $O(Xw^{-1})$. Otherwise, if $\chi_q$ exists, $\beta_q$ is a Siegel zero of level $w$. By uniqueness, the primitive character inducing $\chi_q$ is $\widetilde{\chi}$. Its conductor is $\qsiegel$, so this occurs precisely when $\qsiegel\mid q$, and then $\beta_q=\widetilde{\beta}$. Hence
\begin{align}\label{eq:Lambda-conclusion}
\sum_{\substack{n\leq Y\\ n\equiv b\pmod q}}\Lambda(n)= \frac{Y}{\varphi(q)}-1_{\qsiegel\mid q}\frac{\widetilde{\chi}(b)}{\varphi(q)}
\frac{Y^{\widetilde{\beta}}}{\widetilde{\beta}}
+O(Xw^{-1}).
\end{align}

We next compute the corresponding model sum.  We first treat the untwisted part.  Expanding $\Lambda_W^{(I)}$ and then using inclusion-exclusion for the condition $(n,m)=1$, we obtain
\begin{align*}
&\sum_{\substack{n\leq Y\\ n\equiv b\pmod q\\ (n,m)=1}}
\Lambda_W^{(I)}(n) =\frac{W}{\varphi(W)}
\sum_{\substack{r\mid W\\ r\leq w^{s_0}}}\mu(r)
\sum_{d\mid m}\mu(d)
|\{n\leq Y\colon n\equiv b\pmod q,\ r\mid n,\ d\mid n\}|.
\end{align*}
Since $(b,q)=1$, the congruences $n\equiv b\pmod q$ and $r\mid n$ are incompatible unless $(r,q)=1$. Moreover, since
every prime divisor of $q$ divides $W$, since $r\mid W$, and since $d\mid m$ with $(m,W)=1$, we have $(q,d)=1$ and $(r,d)=1$. Hence, when
$(r,q)=1$, the three congruences
\begin{align*}
        n\equiv b\pmod q,\qquad r\mid n,\qquad d\mid n
\end{align*}
have pairwise coprime moduli and combine by the Chinese remainder theorem into a single residue class modulo $qrd$.  Thus
\begin{align*}
|\{n\leq Y\colon n\equiv b\pmod q,\ r\mid n,\ d\mid n\}|
=\frac{Y}{qrd}+O(1).
\end{align*}
It follows that
\begin{align*}
&\sum_{\substack{n\leq Y\\ n\equiv b\pmod q\\ (n,m)=1}}
\Lambda_W^{(I)}(n)=
\frac{W}{\varphi(W)}
\frac{Y}{q}
\sum_{\substack{r\mid W\\ r\leq w^{s_0}\\ (r,q)=1}}\frac{\mu(r)}{r}
\sum_{d\mid m}\frac{\mu(d)}{d}
+O\left(\frac{W}{\varphi(W)}\tau(m)w^{s_0}\right).
\end{align*}
Note that we have the tail estimate
\begin{align}\label{eq:smoothtail}
 \sum_{\substack{r\mid W\\ r>w^{s_0}}}\frac{1}{r}\ll e^{-s_0}\prod_{p\leq w}\left(1+p^{-1+1/(\log w)}\right)\ll e^{-s_0}(\log w)\ll e^{-s_0/2}.   
\end{align}
From this it follows that
\begin{align*}
\frac{W}{\varphi(W)}
\sum_{\substack{r\mid W\\ r\leq w^{s_0}\\ (r,q)=1}}\frac{\mu(r)}{r}
=\frac{W}{\varphi(W)}
\sum_{\substack{r\mid W\\  (r,q)=1}}\frac{\mu(r)}{r}+O(w^{-1})=\frac{q}{\varphi(q)}+O(w^{-1}),
\end{align*}
say. Therefore
\begin{align}\label{eq:model-untwisted-principal}
\sum_{\substack{n\leq Y\\ n\equiv b\pmod q\\ (n,m)=1}}
\Lambda_W^{(I)}(n)
=\frac{\varphi(m)}{m}\frac{Y}{\varphi(q)}
+O\left(Yw^{-1}+\tau(m)w^{s_0}\log w\right).
\end{align}

We now treat the twisted part. If
there is no Siegel zero, then $\widetilde{\chi}\equiv 0$ and there is
nothing to prove. Otherwise, expanding as above gives
\begin{align}\label{eq:lambda-twist-expand}
\sum_{\substack{n\leq Y\\ n\equiv b\pmod q\\ (n,m)=1}}
\Lambda_W^{(I)}(n)\widetilde{\chi}(n)n^{\widetilde{\beta}-1}=
\frac{W}{\varphi(W)}
\sum_{\substack{r\mid W\\ r\leq w^{s_0}}}\mu(r)
\sum_{d\mid m}\mu(d)
\sum_{\substack{n\leq Y\\ n\equiv b\pmod q\\ r\mid n\\ d\mid n}}
\widetilde{\chi}(n)n^{\widetilde{\beta}-1}.
\end{align}
We may again restrict to the terms with $(r,q)=(d,q)=1$ as the other terms vanish. The terms with $(r,\qsiegel)>1$ vanish. Indeed, if
$p\mid (r,\qsiegel)$, then $r\mid n$ implies $p\mid n$,
and hence $\widetilde{\chi}(n)=0$. We may therefore restrict to
$(r,\qsiegel)=1$. Also $(d,\qsiegel)=1$,
since $d\mid m$, $(m,W)=1$, and every prime divisor of
$\qsiegel$ divides $W$.

Write
\begin{align*}
 \widetilde{\chi}(n) =\sum_{h\pmod {\qsiegel}}
        \widetilde{\chi}(h)1_{n\equiv h\pmod {\qsiegel}}.
\end{align*}
Since $(rd,\qsiegel)=1$ and $(rd,q)=1$, for a given $h$ the congruences
\begin{align*}
        n\equiv b\pmod q,\qquad r\mid n,\qquad d\mid n,\qquad
        n\equiv h\pmod {\qsiegel}
\end{align*}
are compatible if and only if
\begin{align*}
        h\equiv b\pmod{(q,\qsiegel)}.
\end{align*}
When this holds, since $(rd,\qsiegel)=1$ and also
$(rd,q)=1$, the Chinese remainder theorem combines the congruences into
a single residue class modulo
\begin{align*}
        [q,\qsiegel]rd.
\end{align*}
Hence, using partial summation to handle the weight
$n^{\widetilde{\beta}-1}$, we get
\begin{align*}
\sum_{\substack{n\leq Y\\ n\equiv b\pmod q\\ r\mid n\\ d\mid n}}\widetilde{\chi}(n)n^{\widetilde{\beta}-1}=
\frac{Y^{\widetilde{\beta}}}{\widetilde{\beta}}
\frac{1}{[q,\qsiegel]rd}
\sum_{\substack{h\pmod {\qsiegel}\\
        h\equiv b\pmod {(q,\qsiegel)}}}
\widetilde{\chi}(h)+O(\qsiegel).
\end{align*}
If $\qsiegel\nmid q$, then $(q,\qsiegel)$ is
a proper divisor of $\qsiegel$, and then
\begin{align*}
        \sum_{\substack{h\pmod {\qsiegel}\\ h\equiv b\pmod {(q,\qsiegel)}}}
        \widetilde{\chi}(h)=0
\end{align*}
by primitivity of $\widetilde{\chi}$. If $\qsiegel\mid q$, then $(q,\qsiegel)=\qsiegel$, so the sum is $\widetilde{\chi}(b)$.

Recalling~\eqref{eq:lambda-twist-expand} and using again the tail estimate~\eqref{eq:smoothtail} for the truncated $r$ sum, we get
\begin{align}\label{eq:model-twisted-principal}
\sum_{\substack{n\leq Y\\ n\equiv b\pmod q\\ (n,m)=1}}
\Lambda_W^{(I)}(n)\widetilde{\chi}(n)n^{\widetilde{\beta}-1}=
1_{\qsiegel\mid q}\,
\frac{\varphi(m)}{m}
\frac{\widetilde{\chi}(b)}{\varphi(q)}
\frac{Y^{\widetilde{\beta}}}{\widetilde{\beta}}
+O\left(Yw^{-1}+\qsiegel\tau(m)w^{s_0}(\log w)\right).
\end{align}
Combining \eqref{eq:model-untwisted-principal} and
\eqref{eq:model-twisted-principal}, we obtain
\begin{align*}
\sum_{\substack{n\leq Y\\ n\equiv b\pmod q\\ (n,m)=1}}
\widetilde{\Lambda}_W^{(I)}(n)=
\frac{\varphi(m)}{m}
\left(
\frac{Y}{\varphi(q)}
-1_{\qsiegel\mid q}\,
\frac{\widetilde{\chi}(b)}{\varphi(q)}
\frac{Y^{\widetilde{\beta}}}{\widetilde{\beta}}
\right)+O\left(Xw^{-1}+\tau(m)w^{s_0+1}(\log w)\right).
\end{align*}
Multiplication by $m/\varphi(m)$ makes the main terms agree with those
obtained from \eqref{eq:Lambda-coprime-m} and \eqref{eq:Lambda-conclusion}. Therefore
\begin{align}\label{eq:principal-sum-bound}
\sum_{\substack{m\leq M'\\ (m,\prod_{p\leq w}p)=1}}
\max_{Y\leq X}
\frac{1}{\varphi(m)}
\left|\sum_{\substack{n\leq Y\\ n\equiv b\pmod q\\ (n,m)=1}}F_m(n)
\right|\ll Xw^{-1}\log N.
\end{align}

It remains to prove
\begin{align}\label{eq:centered-goal}
\sum_{\substack{m\leq M'\\ (m,\prod_{p\leq w}p)=1}}
\max_{\substack{b\pmod {mq}\\ (b,mq)=1}}
\max_{Y\leq X}
\left|D_{q,b,F_m1_{(X^{1/6},X]}}(Y;m,b)\right|
\ll
X\left(\theta^{1/6}+w^{-1/3}\right)(\log N)^{C_0},
\end{align}
where we replaced $F_m$ by $F_m1_{(X^{1/6},X]}$ by estimating by a direct pointwise bound the contribution of $F_m1_{[1,X^{1/6}]}$ by $\ll M'X^{1/6+1/100}(\log X)\ll Xw^{-1/3}$

We decompose $F_m$ into three kinds of terms. We write $h_{\leq Z}(n)=h(n)1_{n\leq Z}$ and $h_{>Z}=h-h_{\leq Z}$. With $U=V=X^{1/6}$, Vaughan's identity gives
\begin{align*}
\Lambda=\Lambda_{\leq V}+\mu_{\leq U}*\log-1*\mu_{\leq U}*\Lambda_{\leq V}+\mu_{>U}*(1*\Lambda_{>V}),
\end{align*}
where each individual convolution factor is $\leq \log X$ on $[1,X]$.
Using this combined with the definition of $\widetilde{\Lambda}^{(I)}$ and the fact that $w^{s_0}\leq X^{1/3}$ for all sufficiently large $N$, we can write $F_m/\log X$ on $(X^{1/6},X]$ as a linear combination of $O((\log X)^{C_0})$ functions of the following forms.

The first kind is
\begin{align*}
        f^{(I)}=1*\alpha \quad\text{or}\quad\log*\alpha
\end{align*}
where $\alpha(n)\ll \tau(n)^{C_0}$ and $\alpha$ is supported on
$[1,2 X^{1/3}]$.

The second kind is the twisted type I form
\begin{align*}
        f^{(I,\mathrm{tw})}(n)= (1*\alpha)(n)\widetilde{\chi}(n)n^{\widetilde{\beta}-1},
\end{align*}
with the same restrictions on $\alpha$.

The third kind is
\begin{align*}
        f^{(II)}=\beta*\gamma,
\end{align*}
where $\beta(n),\gamma(n)\ll \tau(n)^{C_0}$ and $\beta,\gamma$ are supported in
dyadic intervals $(B,2B], (C,2C]$
with
\begin{align*}
        X^{1/6}\ll B,C\ll X^{5/6},\qquad BC\ll X.
\end{align*}
It suffices to prove \eqref{eq:centered-goal} for each of these pieces.

For $f=1*\alpha$, direct counting gives
\begin{align*}
D_{q,b,f}(Y;m,a)
&=\sum_{\ell\leq 2 X^{1/3}}\alpha(\ell)
\left(
\sum_{\substack{n\leq Y\\ n\equiv b\pmod q\\ n\equiv a\pmod m}}
        1_{\ell\mid n}-\frac{1}{\varphi(m)}
\sum_{\substack{n\leq Y\\ n\equiv b\pmod q\\ (n,m)=1}}
        1_{\ell\mid n}
\right)                                                     \\
&\ll \sum_{\ell\leq 2 X^{1/3}}\tau(\ell)^{C_0}\tau(m)
\ll X^{1/3}(\log N)^{C_0}\tau(m),
\end{align*}
where we used that if $(\ell,mq)>1$, both counting functions vanish, and
otherwise the first counting function and the second counting
function divided by $\varphi(m)$ are both
$Y/(q\ell m)+O(\tau(m))$.
Summing over $m\leq M'$ and adjusting $C_0$ yields
\begin{align}\label{eq:Ff}
   \sum_{\substack{m\leq M'\\ (m,\prod_{p\leq w}p)=1}}
\max_{\substack{b\pmod {mq}\\ (b,mq)=1}}
\max_{Y\leq X}
\left|D_{q,b,f}(Y;m,b)\right|  \ll M'X^{1/3}(\log N)^{C_0}
        \ll X\theta^{1/3}(\log N)^{C_0}.
\end{align}

For $f=\log*\alpha$, partial summation gives the same estimate~\eqref{eq:Ff} with an additional factor $\log X$, which is absorbed into $(\log N)^{C_0}$.
The twisted type I terms are treated in the same way. Write
\begin{align*}
 \widetilde{\chi}(n) =\sum_{h\pmod {\qsiegel}}
        \widetilde{\chi}(h)
        1_{n\equiv h\pmod {\qsiegel}}
\end{align*}
and
\begin{align*}
 n^{\widetilde{\beta}-1}
 = 1+(\widetilde{\beta}-1)\int_1^X
        u^{\widetilde{\beta}-2}1_{u\leq n}\,\d u.
\end{align*}
The preceding type I estimate applies to the resulting congruence sums, and partial summation handles the contribution of the integral. This gives a contribution of
\begin{align*}
        \ll \qsiegel M'X^{1/3}(\log N)^{C_0}\ll X\theta^{1/3}(\log N)^{C_0},
\end{align*}
since $\qsiegel\leq w\leq N^{1/3}$ and $X=M'N$.

It remains to estimate a type II piece $f=\beta*\gamma$. Orthogonality of
characters modulo $m$ gives
\begin{align*}
D_{q,b,f}(Y;m,a)=
\frac{1}{\varphi(m)}
\sum_{\substack{\chi\pmod m\\ \chi\neq \chi_0}}
\overline{\chi}(a)
\sum_{\substack{u,v\in \mathbb{Z}\\ uv\leq Y\\ uv\equiv b\pmod q}}\beta(u)\gamma(v)\chi(uv).
\end{align*}
Detecting the congruence $uv\equiv b\pmod q$ by characters modulo $q$, we obtain
\begin{align*}
D_{q,b,f}(Y;m,a)=
\frac{1}{\varphi(m)\varphi(q)}
\sum_{\psi\pmod q}\overline{\psi}(b)
\sum_{\substack{\chi\pmod m\\ \chi\neq \chi_0}}
\overline{\chi}(a)
\sum_{u,v\in \mathbb{Z}} \beta(u)\psi(u)\chi(u)
\gamma(v)\psi(v)\chi(v)1_{uv\leq Y}.
\end{align*}

We remove the condition $uv\leq Y$ by decomposing into short multiplicative
intervals. Let
\begin{align*}
        \eta\coloneqq \frac{1}{100}\left(\theta^{1/6}+w^{-1/3}\right),
\end{align*}
and let
\begin{align*}
        I_i=((1+\eta)^i,(1+\eta)^{i+1}]
\end{align*}
for integers $i\geq 0$. Then
\begin{align*}
1_{uv\leq Y}
=
\sum_{\substack{i,j\geq 0\\ (1+\eta)^{i+j+2}\leq Y}}
        1_{u\in I_i}1_{v\in I_j}
+
O\left(
\sum_{\substack{i,j\geq 0\\
        (1+\eta)^{i+j}\leq Y< (1+\eta)^{i+j+2}}}
        1_{u\in I_i}1_{v\in I_j}
\right).
\end{align*}
Indeed, if $u\in I_i$ and $v\in I_j$, then
\begin{align*}
        (1+\eta)^{i+j}<uv\leq (1+\eta)^{i+j+2}.
\end{align*}
Thus boxes with $(1+\eta)^{i+j+2}\leq Y$ lie inside the region
$uv\leq Y$, while the only boxes not decided by this condition are those with
\begin{align*}
        (1+\eta)^{i+j}\leq Y<(1+\eta)^{i+j+2}.
\end{align*}

We next estimate the contribution of the boundary boxes. If
\begin{align*}
        (1+\eta)^{i+j}\leq Y< (1+\eta)^{i+j+2},
\end{align*}
then every pair $(u,v)\in I_i\times I_j$ satisfies
\[
(1+\eta)^{-2}Y<uv\leq(1+\eta)^2Y.
\]
We have
\[
\sum_{uv=t}|\beta(u)\gamma(v)|\ll\tau(t)^{C_0}.
\]
For $Y\leq X^{3/4}$, the divisor bounds and direct counting give a total contribution to \eqref{eq:centered-goal} of
\begin{align*}
\ll X^{1/100}\sum_{m\leq M'}\left(\frac{X^{3/4}}{mq}+1+\frac{X^{3/4}}{\varphi(m)}\right)\ll X^{4/5}\ll  Xw^{-1/3}.
\end{align*}
We may therefore assume that $Y>X^{3/4}$. Then $mq\leq X^{1/2}w\leq Y^{3/4}$ and $\eta Y\geq Y^{9/10}$ for sufficiently large $N$. Now, Shiu's bound in short arithmetic progressions, followed by summation over $m\leq M'$, therefore shows that the contribution of the boundary boxes to~\eqref{eq:centered-goal} is
\[
\ll\eta X(\log X)^{C_0} \ll X\left(\theta^{1/6}+w^{-1/3}\right)(\log N)^{C_0},
\]
which is admissible.

It remains to estimate the contribution of boxes lying entirely inside
$uv\leq Y$. For such a box, write
\begin{align*}
        \beta_1(u)=\beta(u)1_{u\in I_i},\qquad
        \gamma_1(v)=\gamma(v)1_{v\in I_j}.
\end{align*}
The condition $uv\leq Y$ is then absent. The number of boxes is
\begin{align}\label{eq:boxnumber}
        O\left(\eta^{-2}(\log X)^2\right)
        \ll\min\{\theta^{-1/3},w^{2/3}\}(\log X)^2.
\end{align}

Thus it is enough to estimate, for each such box, sums of the form
\begin{align*}
\frac{1}{\varphi(m)\varphi(q)}
\sum_{\psi\pmod q}
\sum_{\substack{\chi\pmod m\\ \chi\neq \chi_0}}
\left|\sum_{u\in \mathbb{Z}}\beta_1(u)\psi(u)\chi(u)\right|
\left|\sum_{v\in \mathbb{Z}}\gamma_1(v)\psi(v)\chi(v)\right|.
\end{align*}
Here $\beta_1$ and $\gamma_1$ are restrictions of $\beta$ and $\gamma$ to short multiplicative intervals contained in the original dyadic supports
$B<u\leq 2B$ and $C<v\leq 2C$.

Let $\chi$ be induced by the primitive character $\chi^\ast\pmod r$.
Since $(m,\prod_{p\leq w}p)=1$ and $\chi$ is non-principal, every prime
divisor of $r$ is larger than $w$, so $r>w$. Writing $m=rs$, we have
\begin{align*}
        \chi(n)=\chi^\ast(n)1_{(n,s)=1}.
\end{align*}
For each $s\geq1$, define
\begin{align*}
\beta_{2,s}(u)=\beta_1(u)1_{(u,s)=1},\qquad  \gamma_{2,s}(v)=\gamma_1(v)1_{(v,s)=1}.
\end{align*}
We use $\varphi(rs)^{-1}\ll(\log X)/(rs)$ and split $r>w$ into dyadic intervals $R<r\leq2R$, where $R=2^jw$ with $j\geq0$ and $R\leq M'$. Enlarging the ranges of the nonnegative sums, the type II contribution is
\begin{align}\label{eq:typeII-reduction}
\ll
\frac{(\log X)^{C_0}}{\varphi(q)}
\sum_{\psi\pmod q}
\sum_R\sum_{s\leq M'/R}\frac1s\sum_{R<r\leq2R}
\frac{1}{r}
\sideset{}{^{\ast}}\sum_{\chi\pmod r}
\left|\sum_u \beta_{2,s}(u)\psi(u)\chi(u)\right|
\left|\sum_v \gamma_{2,s}(v)\psi(v)\chi(v)\right|,
\end{align}
where
\begin{align*}
        \|\beta_{2,s}\|_2\ll \|\beta_1\|_2(\log X)^{C_0},\qquad
        \|\gamma_{2,s}\|_2\ll \|\gamma_1\|_2(\log X)^{C_0}.
\end{align*}

For each $R$ and $s$, applying the Cauchy--Schwarz inequality and the large sieve inequality from~\cite[(7.31)]{iw-kow} gives
\begin{align*}
&\sum_{R<r\leq 2R}
\frac{1}{r}
\sideset{}{^{\ast}}\sum_{\chi\pmod r}
\left|\sum_u \beta_{2,s}(u)\psi(u)\chi(u)\right|
\left|\sum_v \gamma_{2,s}(v)\psi(v)\chi(v)\right|                 \\
&\qquad\ll
\|\beta_{2,s}\|_2\|\gamma_{2,s}\|_2
R^{-1}(R^2+B)^{1/2}(R^2+C)^{1/2}\log X.
\end{align*}
By a standard estimate for moments of the divisor function, we have
\begin{align*}
        \|\beta_{2,s}\|_2\|\gamma_{2,s}\|_2
        \ll (BC)^{1/2}(\log X)^{C_0}
        \ll X^{1/2}(\log X)^{C_0},
\end{align*}
so, using $\sum_{s\leq M'/R}s^{-1} \ll \log X$ and increasing $C_0$, the contribution of the dyadic block $(R,2R]$ is
\begin{align*}
 \ll X^{1/2}(\log X)^{C_0}
 R^{-1}(R^2+B)^{1/2}(R^2+C)^{1/2}.
\end{align*}
Using
\begin{align*}
 R^{-1}(R^2+B)^{1/2}(R^2+C)^{1/2}
 \ll
 R+B^{1/2}+C^{1/2}+\frac{(BC)^{1/2}}{R},
\end{align*}
and summing over dyadic $R$ with $w\leq R\leq M'$, we get a contribution of
\begin{align*}
&\ll
X^{1/2}(\log X)^{C_0}
\left( M'+X^{5/12}+\frac{X^{1/2}}{w}
\right)                                                        \\
&\ll
X(\log X)^{C_0}
\left(\theta^{1/2}+X^{-1/12}+w^{-1}
\right).
\end{align*}
After multiplying by the number of boxes in~\eqref{eq:boxnumber}, this is
\begin{align*}
&\ll
X(\log X)^{C_0}
\min\{\theta^{-1/3},w^{2/3}\}
\left(\theta^{1/2}+X^{-1/12}+w^{-1}\right) \\
&\ll
X\left(\theta^{1/6}+w^{-1/3}\right)(\log N)^{C_0}.
\end{align*}
since $X\geq N$ and $w\leq N^{1/12}$ for $N$ sufficiently large. This proves \eqref{eq:centered-goal}. Combining this with
\eqref{eq:principal-sum-bound} and the replacement of
$\widetilde{\Lambda}_W$ by $\widetilde{\Lambda}_W^{(I)}$ proves the lemma.
\end{proof}

\section{Proof of first main estimate}
\label{sec_proof_of_lem_error_estimate_gowers_norm_bound}

The purpose of this section is to prove 
Proposition~\ref{prop_error_estimate_gowers_norm_bound}.
The proof has two main ingredients. The first is the local Gowers norm bound for $\Lambda-\widetilde{\Lambda}_W$ given by Theorem~\ref{lem_local_uniformity_norm_bound_for_siegel-corrected_vonMangoldt}. 
The other is the following standard estimate, which controls multicorrelations along linear forms in terms of Gowers norms. The version that we use is implicit in \cite{gt-linear} and is a special case of \cite[Proposition~1]{CGSS21}.

\begin{proposition}[Generalised von Neumann theorem for cyclic groups]
\label{prop_GvN}
Let $p$ be a prime, let $(u_1,v_1),\ldots,(u_r,v_r)\in (\mathbb{Z}/p\mathbb{Z})^2\setminus \{(0,0)\}$, and suppose that $u_iv_j-u_jv_i\not\equiv 0\pmod p$ whenever $i\neq j$. Then, for every collection of $1$-bounded functions $f_1,\ldots,f_r\colon \Z/p\Z\to\mathbb{C}$, we have
\[
\left|
\sum_{m,n\in(\Z/p\Z)}
\prod_{j=1}^r
f_j\bigl(u_jm+v_jn\bigr)
\right|
\leq p^2\cdot 
\min_{1\leq j\leq r}
\|f_j\|_{U^{r+1}(\Z/p\Z)}.
\]
\end{proposition}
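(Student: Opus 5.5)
We will prove Proposition~\ref{prop_GvN} by the standard iterated Cauchy--Schwarz argument, after a change of variables that makes a chosen form depend on a single variable. By symmetry we fix the index $j$ for which we want the bound, and we relabel so that $j=1$. Write
\[
\Sigma=\sum_{m,n}\prod_{i=1}^r f_i(u_im+v_in).
\]
The hypothesis $u_iv_1-u_1v_i\not\equiv0$ for $i\neq1$ says that each form $\phi_i=(u_i,v_i)$ is linearly independent of $\phi_1$ over $\mathbb{F}_p$.

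\textbf{Step 1: change of variables.} Since $\phi_1\neq0$, we choose an invertible linear change of variables $(m,n)\mapsto(x,y)$ on $(\Z/p\Z)^2$ such that $\phi_1=x$. Concretely, we complete $\phi_1$ to a basis by a form $\phi'$ with $\det(\phi_1,\phi')\neq0$ and set $y=\phi'(m,n)$. In these coordinates $\phi_i=c_ix+e_iy$ for $i\geq2$, and the independence of $\phi_i$ from $\phi_1$ is exactly the condition $e_i\neq0$. The pairwise independence of the forms is preserved, so for $i\neq i'$ (both $\geq2$) we have $c_ie_{i'}-c_{i'}e_i\neq0$. The sum becomes
\[
\Sigma=\sum_{x,y}f_1(x)\prod_{i\geq2}f_i(c_ix+e_iy).
\]

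\textbf{Step 2: iterated Cauchy--Schwarz.} We apply the standard induction: at each stage we Cauchy--Schwarz in the variable on which one remaining function does not depend, which eliminates that function. First we apply Cauchy--Schwarz in $x$, using $|f_1|\le1$; it is cleaner to put $f_1$ last, so alternatively we eliminate $f_r$ first. The cleanest route is to prove by induction on $r$ the following claim. For pairwise independent nonzero forms $\phi_1,\ldots,\phi_r$, we have
\[
|\mathbb{E}_{x,y}\prod_i f_i(\phi_i)|\le\|f_1\|_{U^{r-1}},
\]
which is stronger than the claimed $U^{r+1}$ bound; this suffices because Gowers norms are monotone in the degree for $1$-bounded functions. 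For the inductive step we reparametrise so that $\phi_r$ depends only on one variable $z$. Then the whole product is $\mathbb{E}_z f_r(z)\,\mathbb{E}_t\prod_{i<r}f_i(\phi_i(z,t))$. Cauchy--Schwarz in $z$ removes $f_r$ and produces $\mathbb{E}_{z,t,h}\prod_{i<r}f_i(\phi_i(z,t))\overline{f_i(\phi_i(z,t+h))}$. For fixed $h$, this is an average of products of the multiplicative derivatives $\Delta_{h\alpha_i}f_i$ along $r-1$ pairwise independent forms. Here $\alpha_i$ is the $t$-coefficient of $\phi_i$, which is nonzero by independence from $\phi_r$. The induction hypothesis then gives a bound of $\mathbb{E}_h\|\Delta_{\alpha_1h}f_1\|_{U^{r-2}}$. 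Applying Hölder and the identity $\mathbb{E}_h\|\Delta_hf\|_{U^{k}}^{2^k}=\|f\|_{U^{k+1}}^{2^{k+1}}$, together with $\alpha_1\ne0$ so that $h\mapsto\alpha_1h$ is a bijection, yields $\|f_1\|_{U^{r-1}}$. The base case $r=1$ is $|\mathbb{E}f_1|=\|f_1\|_{U^1}$, and $r=2$ gives $|\mathbb{E}f_1|\,|\mathbb{E}f_2|$.

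\textbf{Step 3: normalisation and conclusion.} Converting the expectation to a sum multiplies by $p^2$. Finally we take the minimum over $j$. The main obstacle is bookkeeping in Step 2: after differencing, the new forms must remain pairwise independent and $f_1$'s form must still be nonzero. Both properties follow because differencing keeps the same linear forms $\phi_i$, restricted to a fixed hyperplane parametrisation. Because $p$ is prime, all nonzero coefficients are invertible, which is where the primality hypothesis enters.
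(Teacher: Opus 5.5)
The paper does not prove this proposition. It only cites it as a special case of \cite[Proposition~1]{CGSS21}, implicit in \cite{gt-linear}. The standard proof behind those references is the iterated Cauchy--Schwarz induction you describe, so your argument is correct and can stand as a self-contained replacement for the citation.

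Your induction gives more than is asked. It yields $|\mathbb{E}_{x,y}\prod_{i}f_i(\phi_i)|\le\|f_1\|_{U^{r-1}}$, which is Cauchy--Schwarz complexity $r-2$ for $r$ pairwise independent forms in two variables. The stated $U^{r+1}$ bound then follows from $\|f\|_{U^s}\le\|f\|_{U^{s+1}}$. That monotonicity holds for every $f$, not just $1$-bounded ones, by one Cauchy--Schwarz applied to
\[
\|f\|_{U^{s+1}}^{2^{s+1}}=\mathbb{E}_{h_1,\ldots,h_s}\Bigl|\mathbb{E}_x\Delta_{h_1,\ldots,h_s}f(x)\Bigr|^2 .
\]
The sharper exponent buys nothing downstream, since the paper's local Gowers norm estimate is available in every fixed degree. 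Its only effect would be to let the application use $U^{2k+1}$ instead of $U^{2k+3}$.

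Two cosmetic fixes:
\begin{itemize}
\item State the inductive claim for $r\ge 2$, with base case $r=2$. For $r=1$ the right-hand side would be $U^0$, and you rightly handle $r=1$ separately via $\|f_1\|_{U^1}=|\mathbb{E}f_1|$.
\item Drop Step~1, the change of variables making $\phi_1=x$, together with the remark about Cauchy--Schwarz in $x$. Cauchy--Schwarz in $x$ would eliminate $f_1$, the function you want to keep. The induction only needs, at each stage, the reparametrisation making the eliminated form $\phi_r$ a coordinate. Pairwise independence then makes the $t$-coefficients $\alpha_i$ nonzero, so $h\mapsto\alpha_1 h$ is a bijection.
\end{itemize}
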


\begin{proof}[Proof of Proposition~\ref{prop_error_estimate_gowers_norm_bound}]
Suppose $\mathbf{0}\neq\mathbf{t}=(t_1,\ldots,t_k)\in \{0,1\}^k$ is given.
Recall that
\[
S_{\mathrm{G},\mathbf{t}}= \sum_{d\mid \mathcal{D}}\sum_{\substack{m\leq M\\ (m,W_1)=1}}\sum_{n\leq N}\Lambda(\psi_{-1}(dm))\Lambda(\psi_0(\mathcal{D}mn))
\prod_{j=1}^k F_{t_j}\big(\psi_j\big(dm,\tfrac{\mathcal{D}}{d}n\big)\big),
\]
where
$F_{0}(n)=\widetilde{\Lambda}_{W_2}(n)$ and $F_{1}(n)=\big(\Lambda-\widetilde{\Lambda}_{W_2}\big)(n)$, 
and our goal is to show
\begin{equation}
\label{eqn_error_estimate_gowers_norm_bound_1}
|S_{\mathrm{G},\mathbf{t}}|\ll  \frac{\tau(\mathcal{D})MN}{(\log N)^{10k}}.
\end{equation}
By the Cauchy--Schwarz inequality and the crude bound $\Lambda(\psi_0(\mathcal{D}mn))\ll \log N$ for large enough $N$, we have
\begin{align*}
|S_{\mathrm{G},\mathbf{t}}|^2 
&\ll \left(\sum_{\substack{m\leq M\\ (m,W_1)=1}}\sum_{n\leq N}\left| \sum_{d\mid \mathcal{D}} \Lambda(\psi_{-1}(dm))
\prod_{j=1}^k F_{t_j}\big(\psi_j\big(dm,\tfrac{\mathcal{D}}{d}n\big)\big)\right|^2\right)
\\
&\hspace{6cm}\cdot\left(\sum_{\substack{m\leq M\\ (m,W_1)=1}}\sum_{n\leq N}\left| \Lambda(\psi_0(\mathcal{D}mn))\right|^2\right)
\\
&\ll \left(MN (\log N)^2\right)\cdot \sum_{\substack{m\leq M\\ (m,W_1)=1}}\sum_{n\leq N}\left| \sum_{d\mid \mathcal{D}} \Lambda(\psi_{-1}(dm))
\prod_{j=1}^k F_{t_j}\big(\psi_j\big(dm,\tfrac{\mathcal{D}}{d}n\big)\big)\right|^2
\\
&\ll \left(MN (\log N)^2\right)\cdot \sum_{m\leq M}\sum_{n\leq N}\left| \sum_{d\mid \mathcal{D}} \Lambda(\psi_{-1}(dm))
\prod_{j=1}^k F_{t_j}\big(\psi_j\big(dm,\tfrac{\mathcal{D}}{d}n\big)\big)\right|^2
\\
&\ll \left(MN (\log N)^2\right)\cdot \sum_{m\leq M}\sum_{N^{9/10}\leq n\leq N}\left| \sum_{d\mid \mathcal{D}} \Lambda(\psi_{-1}(dm))
\prod_{j=1}^k F_{t_j}\big(\psi_j\big(dm,\tfrac{\mathcal{D}}{d}n\big)\big)\right|^2
\\
&\hspace{.62\textwidth} + M^2N^{19/10}\tau(\mathcal D)^2 (\log N)^{2k+4}.
\end{align*}
Write $\Sigma_1$ for the double sum
\[
\Sigma_1\coloneqq\sum_{m\leq M}\sum_{N^{9/10}\leq n\leq N}\left| \sum_{d\mid \mathcal{D}} \Lambda(\psi_{-1}(dm))
\prod_{j=1}^k F_{t_j}\big(\psi_j\big(dm,\tfrac{\mathcal{D}}{d}n\big)\big)\right|^2.
\]
Thus, instead of \eqref{eqn_error_estimate_gowers_norm_bound_1} it suffices to show
\begin{equation}
\label{eqn_error_estimate_gowers_norm_bound_2}
\Sigma_1\ll  \frac{\tau(\mathcal{D})^2 MN}{(\log N)^{20k+2}}.
\end{equation}

By partitioning $[N^{9/10},N]$ into intervals of length $M$, and enlarging the last interval to length $M$, we have
\[
\Sigma_1\ll \frac{N}{M}\sup_{\substack{x\in\mathbb Z\\N^{9/10}-1\leq x\leq N}} \left(\sum_{m,n\leq M}\left| \sum_{d\mid \mathcal{D}} \Lambda(\psi_{-1}(dm))
\prod_{j=1}^k F_{t_j}\big(\psi_j\big(dm,\tfrac{\mathcal{D}}{d}(x+n)\big)\big)\right|^2\right).
\]
Write $\Sigma_2(x)$ for the sum inside the parentheses. 
Then~\eqref{eqn_error_estimate_gowers_norm_bound_2} reduces to showing that for $N^{9/10}-1\leq x\leq N$,
\begin{equation}
\label{eqn_error_estimate_gowers_norm_bound_3}
\Sigma_2(x)\ll  \frac{\tau(\mathcal{D})^2 M^2}{(\log N)^{20k+2}}.
\end{equation}

Fix $x\in [N^{9/10}-1,N]$. Let $\mathcal{B}$ be the set of $(d_1,d_2)$ for which $d_1,d_2\mid \mathcal{D}$, and at least two of the $2k$ linear forms $L_j(\Delta d_1m,\frac{\Delta \mathcal{D}}{d_1}n)$, $L_j(\Delta d_2m,\frac{\Delta \mathcal{D}}{d_2}n)$ are proportional. Forms corresponding to the same divisor are non-proportional by the hypotheses on $L_1,\ldots,L_k$. If forms corresponding to $j,d_1$ and $j',d_2$ are proportional, then
\[
a_jb_{j'}d_1^2=a_{j'}b_jd_2^2.
\]
For fixed $j,j'$ and $d_2$, this determines at most one $d_1$. Consequently,
\[
|\mathcal B|\leq k^2\tau(\mathcal D).
\] Expanding the square in $\Sigma_2(x)$ gives
\begin{align*}
\Sigma_2(x)&= \sum_{d_1,d_2\mid \mathcal{D}}  \Bigg(\sum_{m,n\leq M} \Lambda(\psi_{-1}(d_1m))\Lambda(\psi_{-1}(d_2m))
\\
&\hspace{.3\textwidth}\cdot
\prod_{j=1}^k F_{t_j}\big(\psi_j\big(d_1m,\tfrac{\mathcal{D}}{d_1}(x+n)\big)\big) F_{t_j}\big(\psi_j\big(d_2m,\tfrac{\mathcal{D}}{d_2}(x+n)\big)\big) \Bigg)
\\
&=\sum_{\substack{d_1,d_2\mid \mathcal{D}\\ (d_1,d_2)\not \in \mathcal{B}}}  \Bigg(\sum_{m,n\leq M} \Lambda(\psi_{-1}(d_1m))\Lambda(\psi_{-1}(d_2m))
\\
&\hspace{.03\textwidth}\cdot
\prod_{j=1}^k F_{t_j}\big(\psi_j\big(d_1m,\tfrac{\mathcal{D}}{d_1}(x+n)\big)\big) F_{t_j}\big(\psi_j\big(d_2m,\tfrac{\mathcal{D}}{d_2}(x+n)\big)\big) \Bigg)+ \mathrm{O}\left(\tau(\mathcal{D})M^2 (\log N)^{2k+2}\right).
\end{align*}

We have
\[
\tau(\mathcal D)=(D+1)^\ell
\geq(\log N)^{\delta \ell}.
\]
Our choice of $\ell$ (see \eqref{def_ck_and_ell}) gives $\delta \ell>22k+4$. Therefore
\[
\tau(\mathcal D)M^2(\log N)^{2k+2}
\ll
\frac{\tau(\mathcal D)^2M^2}{(\log N)^{20k+2}}.
\]
Hence, instead of \eqref{eqn_error_estimate_gowers_norm_bound_3}, it is enough to show that when $d_1\mid \mathcal{D}$, $d_2\mid \mathcal{D}$ and $(d_1,d_2)\not \in \mathcal{B}$, we have
\begin{equation}
\label{eqn_error_estimate_gowers_norm_bound_4}
\begin{aligned}
\Sigma_3(x)\coloneqq \sum_{m,n\leq M} &\Lambda(\psi_{-1}(d_1m)) \Lambda(\psi_{-1}(d_2m))
\\
\cdot & \prod_{j=1}^k F_{t_j}\big(\psi_j\big(d_1m,\tfrac{\mathcal{D}}{d_1}(x+n)\big)\big) F_{t_j}\big(\psi_j\big(d_2m,\tfrac{\mathcal{D}}{d_2}(x+n)\big)\big)
\ll  \frac{M^2}{(\log N)^{20k+2}}.&
\end{aligned}
\end{equation}

Fix $d_1,d_2\mid \mathcal{D}$ with $(d_1,d_2)\not \in \mathcal{B}$.
By definition, we have
\[
\psi_j\big(d_im, \tfrac{\mathcal{D}}{d_i}(x+n)\big)=L_j\big(\Delta d_im,\tfrac{\Delta\mathcal D}{d_i}n\big)+\tfrac{\Delta\mathcal{D}}{d_i}b_jx+1.
\]
Note also that
\[
L_j\big(\Delta d_im,\tfrac{\Delta\mathcal D}{d_i}n\big)\in [4\Delta^2\mathcal{D}M] \qquad\textnormal{ for all }n,m\leq M.
\]
Let $p$ be any prime number with $2^{2k+6}\Delta^2\mathcal{D}M< p \leq 2^{2k+7}\Delta^2\mathcal{D}M$. 
Letting
\begin{align*}
G(m)&=1_{[M]}(m) \Lambda(\psi_{-1}(d_1m))\Lambda(\psi_{-1}(d_2m)), 
\\
G_{j}^{(i)}(y)&= F_{t_j}(y+\tfrac{\Delta\mathcal{D}}{d_i}b_jx+1) 1_{[4\Delta^2\mathcal{D}M]}(y),
\end{align*}
we can embed the sum in $\Sigma_3(x)$ into $\Z/p\Z$ and have
\[
\Sigma_3(x)=\sum_{n,m\in\mathbb Z/p\mathbb Z}
1_{[M]}(n)G(m)\prod_{j=1}^k G_j^{(1)}\left(L_j\left(\Delta d_1m,\frac{\Delta\mathcal D}{d_1}n\right)
\right)G_j^{(2)}\left(L_j\left(\Delta d_2m,\frac{\Delta\mathcal D}{d_2}n\right)
\right).
\]
Since $(d_1,d_2)\not \in \mathcal{B}$, the linear forms
\[
n,\quad m,\quad
L_j\big(\Delta d_1m,\tfrac{\Delta\mathcal D}{d_1}n\big),\quad
L_j\big(\Delta d_2m,\tfrac{\Delta\mathcal D}{d_2}n\big),
\qquad 1\leq j\leq k,
\]
are pairwise non-proportional over $\mathbb Q$. Their coefficient determinants are non-zero integers of size $O(\mathcal D^2)$. Since $\mathcal D^2=o(p)$, they remain non-zero modulo $p$. 

Choose $j_0$ such that $t_{j_0}=1$, and let $K=C\log N$ with $C$ sufficiently large. We have
\[
\|G\|_\infty\leq K^2
\qquad\text{and}\qquad
\|G_j^{(i)}\|_\infty\leq K.
\]
Applying Proposition~\ref{prop_GvN} to the normalized functions gives
\[
\Sigma_3(x)\ll
p^2K^{2k+1}
\left\|G_{j_0}^{(1)}\right\|_{U^{2k+3}(\mathbb Z/p\mathbb Z)}.
\]
Let
\[
I=\frac{\Delta\mathcal D}{d_1}b_{j_0}x+1+[4\Delta^2\mathcal DM].
\]
By the definition of the interval Gowers norm,
\begin{align*}
\left\|G_{j_0}^{(1)}\right\|_{U^{2k+3}(\mathbb Z/p\mathbb Z)}
&=\left\|1_{[4\Delta^2\mathcal DM]}\right\|_
 {U^{2k+3}(\mathbb Z/p\mathbb Z)}\left\|F_{t_{j_0}}\right\|_{U^{2k+3}(I)}\\
&\leq\left\|\Lambda-\widetilde\Lambda_{W_2}\right\|_{U^{2k+3}(I)}.
\end{align*}
Consequently,
\[
\Sigma_3(x)\ll p^2(\log N)^{2k+1}\left\|\Lambda-\widetilde\Lambda_{W_2}\right\|_{U^{2k+3}(I)}.
\]

We apply Theorem~\ref{lem_local_uniformity_norm_bound_for_siegel-corrected_vonMangoldt} with
\[
X=\frac{\Delta\mathcal D}{d_1}b_{j_0}x+1\qquad\text{and}\qquad
H=4\Delta^2\mathcal DM.
\]
For sufficiently large $N$, we have $N^{9/10}\leq X\leq N^{1+o(1)}$, $X^{11/16}\leq H\leq X$, and $w_2\leq\exp((\log X)^{1/10})$. The theorem gives, for some $\gamma=\gamma(k)>0$,
\[
\left\|\Lambda-\widetilde{\Lambda}_{W_2}\right\|_{U^{2k+3}(I)}\ll w_2^{-a_{2k+3}}+\exp\bigl(-(\log X)^{\gamma}\bigr)
\ll\exp\bigl(-(\log N)^{2\delta}\bigr).
\]
Using $p\ll\mathcal DM$, the conclusion of \eqref{eqn_error_estimate_gowers_norm_bound_4} follows.
\end{proof}

\section{Proof of second main estimate}
\label{sec_proof_of_lem_error_estimate_BV_bound}

This section is dedicated to the proof of Proposition~\ref{prop_error_estimate_bombieri-vinogradov_bound}. The main technical ingredient in this proof is Lemma~\ref{le:BV}.

\begin{proof}[Proof of Proposition~\ref{prop_error_estimate_bombieri-vinogradov_bound}]
Our goal is to show that
\begin{align}
\label{eqn_error_estimate_bombieri-vinogradov_bound_pr_1}
|S_{\mathrm{BV}}|
&\ll \tau(\mathcal{D})MN\exp(-(\log N)^{1/20}).
\end{align}
where
\begin{align*}
S_{\mathrm{BV}}&=
\sum_{d\mid \mathcal{D}}\sum_{\substack{m\leq M\\ (m,W_1)=1}}\sum_{n\leq N} \Lambda(\psi_{-1}(dm)) \big(\Lambda-\tfrac{m}{\varphi(m)}\widetilde{\Lambda}_{W_1}\big)(\psi_0(\mathcal{D}mn))\prod_{j=1}^k \widetilde{\Lambda}_{W_2}\big(\psi_j\big(dm,\tfrac{\mathcal{D}}{d}n\big)\big)
\end{align*}

Let $\Lambda_{W_2}^{(I)}(n)$ be as in \eqref{def_typeI-sum} with $s=1000(\log N)^{1/10}$ and define
\[
\widetilde{\Lambda}_{W_2}^{(I)}(n)\coloneqq \Lambda_{W_2}^{(I)}(n)\bigl(1-\widetilde{\chi}_2(n)n^{\widetilde{\beta}_2-1}\bigr).
\]
For $d\mid\mathcal D$, $m\leq M$, and $1\leq j\leq k$, write
\[
A_j=\frac{\Delta\mathcal D b_j}{d}\qquad\text{and}\qquad B_j=\Delta a_jdm+1.
\]
We apply Lemma~\ref{le:Lambdareplace} with
\[
w=w_2,\qquad q=A_j,\qquad a=B_j,\qquad X=A_jN+B_j.
\]
Its hypotheses hold for sufficiently large $N$, since every prime divisor of $\Delta b_j$ divides $\Delta$ and hence does not divide $B_j$, so $(A_j,B_j)\mid\mathcal D=\exp(o(s))$, while $X\geq N\geq w_2^{10s}$ and $X\geq A_j^{10/9}$. It follows that
\begin{align}\begin{split}\label{eqn_single_factor_l1}
\sum_{n\leq N}\big| \widetilde{\Lambda}_{W_2}\big(\psi_j\big(dm,\tfrac{\mathcal{D}}{d}n\big)\big) - & \widetilde{\Lambda}_{W_2}^{(I)}\big(\psi_j\big(dm,\tfrac{\mathcal{D}}{d}n\big)\big)\big|
\\
&=\sum_{n\leq N}\big| \widetilde{\Lambda}_{W_2}\big(\Delta a_jdm+\tfrac{\Delta \mathcal{D}b_j}{d}n+1\big) - \widetilde{\Lambda}_{W_2}^{(I)}\big(\Delta a_jdm+\tfrac{\Delta \mathcal{D}b_j}{d}n+1\big)\big|
\\
&\ll\sum_{n\leq N}\big| \Lambda_{W_2}\big(\Delta a_jdm+\tfrac{\Delta \mathcal{D}b_j}{d}n+1\big) - \Lambda_{W_2}^{(I)}\big(\Delta a_jdm+\tfrac{\Delta \mathcal{D}b_j}{d}n+1\big)\big|
\\
&\ll Ne^{-s/12}.
\end{split}
\end{align}
For $1\leq j\leq k$, write
\begin{align*}
\Psi_j(n)&=\psi_j\big(dm,\tfrac{\mathcal D}{d}n\big)=A_jn+B_j,\\
F_j(n)&=\widetilde\Lambda_{W_2}(\Psi_j(n)),\\
F_j^{(I)}(n)&=\widetilde\Lambda_{W_2}^{(I)}(\Psi_j(n)),\\
E_j(n)&=F_j(n)-F_j^{(I)}(n).
\end{align*}
Thus \eqref{eqn_single_factor_l1} gives
\begin{align}
\label{eqn_single_factor_error}
\sum_{n\leq N}|E_j(n)|\ll Ne^{-s/12}.
\end{align}

Note that
\begin{align*}
|F_j(n)|&\ll\log w_2,\\
|F_j^{(I)}(n)|
&\ll(\log w_2)
\sum_{\substack{r\mid W_2\\r\leq w_2^s}}1_{r\mid\Psi_j(n)}.
\end{align*}
After expanding the $h$-th power in the second estimate for a fixed positive integer $h$, the arising congruence $[r_1,\ldots, r_h]\mid A_jn+B_j$ either has no solution or defines one residue class modulo $[r_1,\ldots, r_h]/(A_j,[r_1,\ldots, r_h])$. In the latter case $(A_j,[r_1,\ldots, r_h])\mid(A_j,B_j)$. Since $[r_1,\ldots, r_h]\mid W_2$ is squarefree, we have 
\[
(A_j,[r_1,\ldots, r_h])\leq \textnormal{rad}((A_j,B_j))
\leq\textnormal{rad}(\mathcal D)
\ll(\log N)^{O_k(1)}.
\]
Therefore we have
\begin{align*}
\sum_{n\leq N}|F_j^{(I)}(n)|^h
&\ll(\log w_2)^h \sum_{\substack{r_1,\ldots,r_h\mid W_2\\r_1,\ldots, r_h\leq w_2^s}} \left(N\frac{(A_j,[r_1,\ldots, r_h])}{[r_1,\ldots, r_h]}+1\right)\\
&\ll(\log w_2)^h\left(N(\log N)^{O_k(1)}\prod_{p\leq w_2}\left(1+\frac{2^h-1}{p}\right)+w_2^{sh}\right)\\
&\ll_{h}N(\log N)^{C_{h,k}},
\end{align*}
for some $C_{h,k}>0$, where we used $w_2^{sh}=N^{o(1)}$. This implies that
\begin{align}
\label{eqn_factor_moments}
\sum_{n\leq N}\left(|F_j(n)|^h+|F_j^{(I)}(n)|^h\right)\ll_{h}N(\log N)^{C_{h,k}}.
\end{align}

Taking $h=2k-1$ in \eqref{eqn_factor_moments} and applying the
Cauchy--Schwarz inequality, we obtain
\begin{align}
\label{eqn_factor_Lk}
\sum_{n\leq N}|E_j(n)|^k
&\leq \left(\sum_{n\leq N}|E_j(n)|\right)^{1/2} \left(\sum_{n\leq N}|E_j(n)|^{2k-1}\right)^{1/2}\notag\\
&\ll Ne^{-s/24}(\log N)^{C_k}
\end{align}
for some $C_k>0$. For every $n\in \mathbb{N}$ we have the identity
\[
\prod_{j=1}^kF_j(n)-\prod_{j=1}^kF_j^{(I)}(n)=\sum_{i=1}^k E_i(n)\prod_{j<i}F_j(n)\prod_{j>i}F_j^{(I)}(n).
\]
For each $1\leq i\leq k$, H\"older's inequality, \eqref{eqn_factor_moments} with $h=k$, and \eqref{eqn_factor_Lk} give
\begin{align*}
&\sum_{n\leq N}|E_i(n)| \prod_{j<i}|F_j(n)|\prod_{j>i}|F_j^{(I)}(n)|\\
&\qquad\leq \left(\sum_{n\leq N}|E_i(n)|^k\right)^{1/k} \prod_{j<i}\left(\sum_{n\leq N}|F_j(n)|^k\right)^{1/k} \prod_{j>i}\left(\sum_{n\leq N}|F_j^{(I)}(n)|^k\right)^{1/k}\\
&\qquad\ll_k Ne^{-s/(24k)}(\log N)^{C_k} \ll_k Ne^{-s/(25k)}.
\end{align*}
Summing over $i$ gives
\begin{align}
\label{eqn_approx_lf_1}
\sum_{n\leq N}\left|\prod_{j=1}^k\widetilde\Lambda_{W_2}(\Psi_j(n))-\prod_{j=1}^k\widetilde\Lambda_{W_2}^{(I)}(\Psi_j(n))
\right|
\ll_k Ne^{-s/(25k)}.
\end{align}

Finally, $(m,W_1)=1$ implies $m/\varphi(m)\ll1$, while
\[
\Lambda(\psi_{-1}(dm))\ll\log N \quad\text{and}\quad
\left|\left(\Lambda-\frac{m}{\varphi(m)}\widetilde\Lambda_{W_1}\right)(\psi_0(\mathcal Dmn))\right|\ll\log N.
\]
Therefore the total error caused by this replacement in
$S_{\mathrm{BV}}$ is
\[
\ll\tau(\mathcal D)MN(\log N)^2e^{-s/(25k)}\ll\tau(\mathcal D)MN\exp(-(\log N)^{1/20}).
\]

Therefore, instead of \eqref{eqn_error_estimate_bombieri-vinogradov_bound_pr_1}, it suffices to show that for any fixed $d\mid \mathcal{D}$ we have
\begin{align}
\label{eqn_error_estimate_bombieri-vinogradov_bound_pr_2}
\begin{aligned}
\Bigl|\sum_{\substack{m\leq M\\ (m,W_1)=1}}\sum_{n\leq N} \Lambda(\psi_{-1}(dm)) \big(\Lambda-\tfrac{m}{\varphi(m)}\widetilde{\Lambda}_{W_1}\big)(\psi_0(\mathcal{D}mn)) \prod_{j=1}^k & \widetilde{\Lambda}_{W_2}^{(I)}\big(\psi_j\big(dm,\tfrac{\mathcal{D}}{d}n\big)\big)\Bigr|
\\
&\ll MN\exp(-(\log N)^{1/20}).
\end{aligned}
\end{align}
As
\[
\widetilde{\Lambda}_{W_2}^{(I)}(n)= (1-\widetilde{\chi}_2(n)n^{\widetilde{\beta}_2-1}) \Lambda_{W_2}^{(I)}(n),
\]
we can write the product
\[
\prod_{j=1}^k \widetilde{\Lambda}_{W_2}^{(I)}\big(\psi_j\big(dm,\tfrac{\mathcal{D}}{d}n\big)\big)
=
\sum_{(t_1,\ldots,t_k)\in \{0,1\}^k} (-1)^{t_1+\ldots+t_k}
z_{t_1,\ldots,t_k}(n,m)\, 
v_{t_1,\ldots,t_k}(n,m),
\]
where
\begin{align*}
z_{t_1,\ldots,t_k}(n,m)&=\prod_{j=1}^k \big(\psi_j\big(dm,\tfrac{\mathcal{D}}{d}n\big)\big)^{t_j(\widetilde{\beta}_2-1)},
\\
v_{t_1,\ldots,t_k}(n,m)&=\Bigg(\prod_{j=1}^k \widetilde{\chi}_2\big(\psi_j\big(dm,\tfrac{\mathcal{D}}{d}n\big)\big)^{t_j} \Lambda_{W_2}^{(I)}\big(\psi_j\big(dm,\tfrac{\mathcal{D}}{d}n\big)\big) \Bigg).
\end{align*}
Therefore, instead of \eqref{eqn_error_estimate_bombieri-vinogradov_bound_pr_2}, it is enough to prove that for any $(t_1,\ldots,t_k)\in \{0,1\}^k$ we have
\begin{align}
\label{eqn_error_estimate_bombieri-vinogradov_bound_pr_3}
\begin{aligned}
\Bigl|\sum_{\substack{m\leq M\\ (m,W_1)=1}}\sum_{n\leq N} \Lambda(\psi_{-1}(dm)) \big(\Lambda-\tfrac{m}{\varphi(m)}\widetilde{\Lambda}_{W_1}\big)(\psi_0(\mathcal{D}mn))  z_{t_1,\ldots,t_k} &(n,m)\,  v_{t_1,\ldots,t_k}(n,m)\Bigr|
\\
&\ll MN\exp(-(\log N)^{1/20}).
\end{aligned}
\end{align}
Note that $(n,m)\mapsto z_{t_1,\ldots,t_k}(n,m)$ is non-negative, non-increasing, and convex, with $\|z_{t_1,\ldots,t_k}\|_\infty\leq 1$.
Therefore, using summation by parts in two variables, we can further reduce \eqref{eqn_error_estimate_bombieri-vinogradov_bound_pr_3} to showing that for all $N'\leq N$ and $M'\leq M$ we have
\begin{align}
\label{eqn_error_estimate_bombieri-vinogradov_bound_pr_4}
\begin{aligned}
\Bigl|\sum_{\substack{m\leq M'\\ (m,W_1)=1}}\sum_{n\leq N'} \Lambda(\psi_{-1}(dm)) \big(\Lambda-\tfrac{m}{\varphi(m)}\widetilde{\Lambda}_{W_1}\big)(\psi_0(\mathcal{D}mn))    v_{t_1,\ldots,t_k}(n,m)\Bigr|
\ll MN\exp(-(\log N)^{1/20}).
\end{aligned}
\end{align}

Using the definition \eqref{def_typeI-sum} and using 
$$\widetilde\chi_2(n)=\sum_{b\pmod{\qsiegel_2}}\widetilde\chi_2(b)1_{n\equiv b\pmod{\qsiegel_2}},$$
we can write for every fixed $m$ the function 
$n\mapsto \frac{\varphi(W_2)^k}{W_2^k}v_{t_1,\ldots,t_k}(n,m)$ as a linear combination of indicator functions of arithmetic progressions, where the number of progressions and their moduli are bounded by $\ll \qsiegel_2^k w_2^{ks}\leq w_2^{2ks}$, the coefficients of the linear combination are from $\{-1,0,+1\}$, and
all moduli are $w_2$-smooth. For each term of the expansion, the congruences give either no solutions or a single residue class $n\equiv r_m\pmod q$, where $q$ is independent of $m$.

Since $\frac{W_2^k}{\varphi(W_2)^k}\ll \log(w_2)^k$, to finish the proof it therefore suffices to establish that for all $w_2$-smooth integers $q$ with $1\leq q\leq w_2^{2ks}$, and all $r_m\in \{0,1,\ldots,q-1\}$, $1\leq m\leq M$, we have
\begin{align}
\label{eqn_error_estimate_bombieri-vinogradov_bound_pr_5}
\begin{aligned}
\Bigl|\sum_{\substack{m\leq M'\\ (m,W_1)=1}}\sum_{n\leq N'} \Lambda(\psi_{-1}(dm)) \big(\Lambda-\tfrac{m}{\varphi(m)}\widetilde{\Lambda}_{W_1}\big)(\psi_0(\mathcal{D}mn))  & 1_{n\equiv r_m\pmod q}\Bigr|
\\
&\ll w_2^{-3ks}MN\exp(-(\log N)^{1/20}).
\end{aligned}
\end{align}
Since
\[
(\log N)w_1^{-1/4} \leq w_2^{-3ks}\exp(-(\log N)^{1/20}),
\]
the estimate follows from the next lemma with $g(u)=\Lambda(u)1_{u\leq\Delta dM+1}$, since $\|g\|_\infty\ll\log N$.
\end{proof}

\begin{lemma} Let $N\geq 2$ and $s=1000(\log N)^{1/10}$. Let $1\leq q\leq w_2^{2sk}$ be a $w_2$-smooth integer, and let $r_m\in \{0,1,\ldots,q-1\}$ for $1\leq m \leq M$.
For any function $g\colon \mathbb{N}\to \mathbb{C}$, any $M'\leq M, N'\leq N$, and any $d\mid\mathcal{D}$ we have
\begin{align}
\label{eq:Lambdaclaim1}
\begin{aligned}
\sum_{\substack{m\leq M'\\ (m,W_1)=1}}\sum_{n\leq N'} g(\psi_{-1}(dm))(\Lambda-\tfrac{m}{\varphi(m)}\widetilde{\Lambda}_{W_1})(\psi_0(\mathcal{D}mn)) & 1_{n\equiv r_m\pmod q}\ll\|g\|_{\infty} MN w_1^{-1/4}.
\end{aligned}
\end{align}
\end{lemma}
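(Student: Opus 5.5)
The plan is to reduce the estimate to Lemma~\ref{le:BV}, applied with the modulus $\mathcal{D}mq$ in place of $mq$. First, bound $|g(\psi_{-1}(dm))|\leq \|g\|_\infty$ and pull it out of the inner sum. It then suffices to bound
\[
\sum_{\substack{m\leq M'\\ (m,W_1)=1}}\Bigl|\sum_{\substack{n\leq N'\\ n\equiv r_m\pmod q}}\bigl(\Lambda-\tfrac{m}{\varphi(m)}\widetilde{\Lambda}_{W_1}\bigr)(\Delta^2\mathcal{D}mn+1)\Bigr|.
\]
Substitute $u=\Delta^2\mathcal{D}mn+1$. The conditions $n\leq N'$ and $n\equiv r_m\pmod q$ become $u\leq \Delta^2\mathcal{D}mN'+1$ and $u\equiv b_m\pmod{\Delta^2\mathcal{D}q\, m}$, where $b_m=\Delta^2\mathcal{D}mr_m+1$.

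Since $(b_m,m)=1$ automatically, this is exactly the progression shape handled in Lemma~\ref{le:BV}. There we take modulus $q'm$ with $q'=\Delta^2\mathcal{D}q$, and $(m,W_1)=1$ ensures $(m,q')=1$ because $q'$ is $w_1$-smooth. The size bound is $q'\leq \Delta^2\mathcal{D}w_2^{2sk}=\exp(O((\log N)^{1/10+1/10}))\leq w_1$, since $\mathcal{D}=\exp((\log N)^{\delta+o(1)})$ and $w_2^{s}=\exp(O((\log N)^{1/5}))$. So we apply Lemma~\ref{le:BV} with $w=w_1\leq\exp((\log N_0)^{1/3})$.

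For the length, the natural choice is a total length $X=\Delta^2\mathcal{D}M'N'$. The lemma's normalisation $X=M''N''$ with $M''\leq N''/q'^5$ can be met by taking $M''=M'$ and $N''=\Delta^2\mathcal{D}N'$; then $M'\leq N^{3/4}\leq N''/q'^5$. The $\max_{y}$ in the lemma covers the cutoff $y=q'' m N'+1$, which depends on $m$. Also $\Lambda=0$ or the model vanishes at irrelevant points, so nothing extra arises.

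The lemma then gives a bound
\[
\ll \mathcal{D}M'N\bigl((M/N)^{1/6}+w_1^{-1/3}\bigr)(\log N)^{C_0}.
\]
Here $(M/N)^{1/6}=N^{-1/24}$ is negligible. The factor $\mathcal{D}(\log N)^{C_0}=\exp((\log N)^{\delta+o(1)})$ is absorbed by $w_1^{-1/12}=\exp(-\tfrac{1}{12}(\log N)^{1/4})$ because $\delta<1/4$. This leaves $\ll MNw_1^{-1/4}$ after multiplying by $\|g\|_\infty$.

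The main obstacle is bookkeeping rather than depth. Two things need care. First, the $\mathcal{D}$ (and $\Delta^2$) factor in the sum length must genuinely be paid for by the $w_1^{-1/3}$ saving. Second, if $N'$ is small, for instance $N'<N^{1/2}$, the BV hypothesis may fail; such cases should be handled trivially, since the sum is then $\ll MN'\log N$, which is acceptable. I would also double check that $w_1\geq q'$ holds even with the $w_2^{2sk}$ factor. If it fails, I would replace $w$ by a slightly larger parameter, or split $q$ off via characters mod $q$, which costs only $w_2^{O(s)}$, still $\ll w_1^{o(1)}$.
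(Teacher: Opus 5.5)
Your proposal is correct and is essentially the paper's proof. Both substitute $u=\Delta^2\mathcal D mn+1$, treat the inner sum as a sum over a progression modulo $\Delta^2\mathcal D q\,m$ with residue coprime to $m$, and apply Lemma~\ref{le:BV} with $\Delta^2\mathcal D q\le w_1$ in place of $q$ and $w=w_1$, after which $\mathcal D(\log N)^{C_0}$ is absorbed by a power of $w_1$.

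The only difference is bookkeeping. The paper takes the length parameter to be $2\Delta^2\mathcal D N$, independent of $N'$. The $\max_y$ covers every cutoff $\Delta^2\mathcal D mN'+1$, and writing each range as a difference of two initial segments handles the lower endpoint (the stray term $u=1$), so no small-$N'$ case arises. Your threshold $N'<N^{1/2}$ would in fact be too small to guarantee $M'\le N''/q'^5$, which needs roughly $N'\geq N^{3/4+o(1)}$. The trivial bound still disposes of all $N'\le N^{4/5}$, so the argument goes through once that threshold is adjusted.
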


\begin{proof}
We can rewrite the left-hand side of \eqref{eq:Lambdaclaim1} as
\begin{align*}
\sum_{\substack{m\leq M'\\ (m,W_1)=1}} & g(\Delta dm+1)
\sum_{\substack{n\leq N'\\ n\equiv r_m\pmod{q}}}
(\Lambda-\tfrac{m}{\varphi(m)}\widetilde{\Lambda}_{W_1})(\Delta^2\mathcal{D}mn+1)
\\
&=\sum_{\substack{m\leq M'\\ (m,W_1)=1}} g(\Delta dm+1)
\sum_{\substack{\Delta^2\mathcal{D}m+1\leq n'\leq \Delta^2\mathcal{D}mN'+1\\ n'\equiv \Delta^2\mathcal{D}mr_m+1\pmod{\Delta^2\mathcal{D}qm}}}
(\Lambda-\tfrac{m}{\varphi(m)}\widetilde{\Lambda}_{W_1})(n').
\end{align*}
Taking absolute values and expressing each interval as the difference of two initial intervals, the left-hand side of~\eqref{eq:Lambdaclaim1} is
\begin{align}\label{eq:BVgoal}
\leq 2 \|g\|_{\infty}\sum_{\substack{m\leq M\\ (m,W_1)=1}}~\max_{\substack{b\pmod{\Delta^2\mathcal{D}qm}\\ (b,m)=1}}\max_{y\leq \Delta^2\mathcal{D}mN'+1}\left|\sum_{\substack{n'\leq y\\ n'\equiv b\pmod{\Delta^2\mathcal{D}qm}}} (\Lambda-\tfrac{m}{\varphi(m)}\widetilde{\Lambda}_{W_1})(n')\right|.  
\end{align}
We apply Lemma~\ref{le:BV} with $2\Delta^2\mathcal DN$ in place of $N$, $\Delta^2\mathcal Dq$ in place of $q$, $M$ in place of $M'$, and $w_1$ in place of $w$. Recalling \eqref{eq:w1choice}, \eqref{eq:Mchoice}, \eqref{eq:w2choice} and \eqref{eqn_bound_for_cD}, we have
\begin{align*}
\Delta^2\mathcal Dq&\leq w_1,\\
w_1&\leq \exp\left((\log(2\Delta^2\mathcal DN))^{1/3}\right),\\
M&\leq \frac{2\Delta^2\mathcal DN}{(\Delta^2\mathcal Dq)^5},
\end{align*}
so the hypotheses of Lemma~\ref{le:BV} are satisfied. Therefore we obtain the upper bound
\begin{align*}
&\ll \|g\|_\infty M(2\Delta^2\mathcal DN)
\left(\left(\frac{M}{2\Delta^2\mathcal DN}\right)^{1/6}
+w_1^{-1/3}\right)(\log(2\Delta^2\mathcal DN))^{C_0}\\
&\ll\|g\|_\infty MNw_1^{-1/4},
\end{align*}
which proves \eqref{eq:Lambdaclaim1}.
\end{proof}

\section{Proof of third main estimate}
\label{sec_proof_of_sieve_and_weil_bound}

In this section, we establish Propositions~\ref{prop:sieve-bound} and~\ref{prop:weil-bound}.
First, we have that Proposition~\ref{prop:sieve-bound} follows immediately from the following more intricate result (by taking $R=1$ and $H=N$).

\begin{lemma}
\label{lem_singular_series_estimate}
Let $R,m\in\N$ and $d\mid\mathcal D$, and, for $i\in\{1,2\}$, set $W_i^\perp=\frac{W_i}{(R,W_i)}$. Define
\begin{equation}
\label{eqn_g_perp}
g_\perp(n,m,d)= \Lambda_{W_1^\perp}\big(\psi_0(\mathcal{D}mn)\big)
\prod_{j=1}^k \Lambda_{W_2^\perp}\big(\psi_j\big(dm,\tfrac{\mathcal{D}}{d}n\big)\big).
\end{equation}
Then for any $N^{1/10}\leq H\leq N$ we have
\[
\max_{0\leq n\leq N}\Bigg|\frac{1}{H}\sum_{h\leq H} g_\perp(n+hR,m,d) - \mathfrak{S}_R^\perp(m,d)\Bigg| \ll 
\exp\bigl(-2(\log N)^{1/2}\bigr),
\]
where 
\[
\mathfrak{S}_R^\perp(m,d)=\prod_{\substack{p\leq w_1\\ p\nmid R}}\beta_p(m,d)
\]
with $\beta_p(m,d)$ as in \eqref{def_beta_dm}.
\end{lemma}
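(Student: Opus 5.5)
Fix $m$ and $d$, and regard $n\mapsto g_\perp(n,m,d)$ as a product of local factors. For each prime $p\le w_1$ with $p\nmid R$ there is one factor: for $p\le w_2$ it is
\[
\Bigl(\tfrac{p}{p-1}\Bigr)^{k+1}1_{p\nmid \psi_0(\mathcal D m n)\prod_j\psi_j(dm,\frac{\mathcal D}{d}n)},
\]
and for $w_2<p\le w_1$ it is $\frac{p}{p-1}1_{p\nmid\psi_0(\mathcal Dmn)}$. Each factor depends only on $n\bmod p$. Since $p\nmid R$, the map $h\mapsto n+hR \bmod p$ is a bijection, so the average of the $p$-factor over a full period in $h$ is exactly $\beta_p(m,d)$, for every starting point $n$. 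The plan is therefore the standard fundamental-lemma argument: replace the product of local indicators by a truncated sieve expansion, count $h$ in progressions modulo squarefree $r\mid W_1^\perp$, and control the tail.

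\textbf{Step 1: expansion.} Write each $1_{p\nmid P(x)}=\sum_{e\mid (p,P(x))}\mu(e)$, where $P$ is the relevant polynomial, a product of at most $k+1$ affine-linear forms in $x$. Multiplying over $p$ gives
\[
g_\perp(n+hR)=c\sum_{r\mid W_1^\perp}\mu(r)\,1_{r\mid P_r(n+hR)},\qquad c=\prod_{p}\Bigl(\tfrac{p}{p-1}\Bigr)^{e_p},
\]
with $e_p\in\{1,k+1\}$ and $P_r$ the appropriate product modulo each $p\mid r$. For fixed $r$, the condition $r\mid P_r(n+hR)$ is a union of $\rho(r)=\prod_{p\mid r}\rho_p$ residue classes of $h$ modulo $r$, where $\rho_p\le k+1$ is the number of roots. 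Hence
\[
\#\{h\le H: r\mid P_r(n+hR)\}=\frac{H\rho(r)}{r}+O(\rho(r)).
\]
If the expansion were exact, the main terms would sum to $H\prod_p\beta_p(m,d)=H\,\mathfrak S_R^\perp(m,d)$. The untruncated expansion, however, has far too many terms. I will use the fundamental lemma of sieve theory, \cite[Fundamental Lemma~6.3]{iw-kow}, with sieve dimension $\kappa=k+1$, level $D_0=H^{1/2}\ge N^{1/20}$, and sifting range $z=w_1$. This gives upper and lower sieve weights $\lambda^\pm_r$, supported on $r\le D_0$ and $r\mid W_1^\perp$, with
\[
\sum_r\lambda^\pm_r\frac{\rho(r)}{r}=\prod_p\Bigl(1-\frac{\rho_p}{p}\Bigr)\bigl(1+O(e^{-u})\bigr),\qquad u=\frac{\log D_0}{\log w_1}\gg(\log N)^{3/4}.
\]
Alternatively, Lemma~\ref{le:Lambdareplace}-style Rankin truncation at $r\le w^s$ could be applied, but only to each linear factor separately, which loses too much. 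So the fundamental lemma is the choice.

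\textbf{Step 2: degenerate primes.} The fundamental lemma needs $\rho_p<p$, i.e.\ $\beta_p\neq 0$, and uniformly $\rho_p\le\kappa$. Any prime where some form is identically zero mod $p$ contributes $\rho_p=0$, which is harmless. If some $\rho_p=p$, then both $g_\perp$ and $\mathfrak S^\perp_R$ vanish identically and there is nothing to prove. For $p>k+1$ we have $\rho_p\le k+1<p$, so the required bound holds. The few small primes $p\le k+1$ I will split off and handle by exact periodicity: they divide $\Delta$, so all forms are $\equiv1$ there, giving $\rho_p=0$ and the local factor $1$.

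\textbf{Step 3: combining.} By sandwiching, $\sum_{h\le H}g_\perp(n+hR)$ lies between
\[
c\Bigl(H\sum_r\lambda^\pm_r\rho(r)/r+O\bigl(\textstyle\sum_{r\le D_0}\rho(r)\bigr)\Bigr).
\]
The remainder is $\ll D_0(\log D_0)^{k+1}\ll H^{1/2+o(1)}$. The main term equals $H\mathfrak S^\perp_R(1+O(e^{-u}))$, because
\[
c\prod_p\Bigl(1-\frac{\rho_p}{p}\Bigr)=\prod_p\beta_p .
\]
Next, $c\le(\log w_1)^{O(1)}$ and $\mathfrak S^\perp_R\ll(\log w_1)^{O(1)}$, so after dividing by $H$ the error is
\[
\ll (\log N)^{O(1)}\bigl(e^{-(\log N)^{3/4}}+H^{-1/2+o(1)}\bigr)\ll\exp\bigl(-2(\log N)^{1/2}\bigr).
\]
All bounds are uniform in $n$, $m$ and $d$, which gives the lemma. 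The main obstacle is the bookkeeping around $c$. Because $\beta_p$ is normalised with $(1-1/p)^{-(k+1)}$ while $\rho_p$ may be smaller, $\beta_p$ can exceed $1$ and $\mathfrak S$ can be as large as $(\log w_1)^{k}$. The multiplicative $e^{-u}$ error must absorb this factor, and it does, by a wide margin.
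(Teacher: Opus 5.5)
Your proposal is correct and is essentially the paper's argument. You write $g_\perp$ as the constant $C_R$ times the indicator that $n+hR$ avoids the bad residues $A_p$ for $p\le w_1$, $p\nmid R$, dispose of the case $A_p=\Z/p\Z$, and sandwich with fundamental-lemma weights. You then count $h$ in classes modulo $r$ using that $h\mapsto n+hR$ is invertible mod $r$ when $(r,R)=1$, and finally absorb the polylogarithmic $C_R$ into the $e^{-u}$ and $H^{-1/2+o(1)}$ errors. The differences are cosmetic: you use sieve dimension $k+1$ with $K=O(1)$ and level $H^{1/2}$, while the paper uses the dimension-one form with $K\ll(\log w_1)^{k+1}$ and level $H^{1/4}$. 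In Step~2, a form that is identically zero mod $p$ gives $\rho_p=p$ (your next case), not $\rho_p=0$; you meant a form that is a nonzero constant mod $p$.
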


\begin{proof}
Consider the sets
\[
A_p=
\begin{cases}
\{x\in\Z/p\Z: p\mid \psi_0(\mathcal{D}mx)\prod_{j=1}^k \psi_j(dm,\frac{\mathcal{D}}{d}x)\},&\text{if}~p\leq w_2
\\
\{x\in\Z/p\Z: p\mid \psi_0(\mathcal{D}mx)\},&\text{if}~w_2<p\leq w_1.
\end{cases}
\]
If $A_p=\Z/p\Z$ for some $p\leq w_1$ with $p\nmid R$, then $g_\perp(n,m,d)=0$ for every $n$ and $\beta_p(m,d)=0$. Thus the left-hand side of the required estimate vanishes. We may therefore assume that
\begin{equation}
\label{eqn_Ap_proper}
A_p\neq\Z/p\Z\qquad\text{whenever }p\leq w_1\text{ and }p\nmid R.
\end{equation}
Consider the squarefree integer
\[
B_n\coloneqq
\prod_{\substack{p\leq w_1\\ p\nmid R\\ n\bmod p\in A_p}}p.
\]
Then
\[
g_\perp(n,m,d)
=
C_R\,1_{(W_1,B_n)=1},
\qquad
C_R:=
\frac{W_1^\perp}{\varphi(W_1^\perp)}
\left(\frac{W_2^\perp}{\varphi(W_2^\perp)}\right)^k.
\]

Set $w=w_1$, $W=W_1$ and $Z=H^{1/4}$. 
Applying the fundamental lemma of sieve theory \cite[Fundamental Lemma 6.3]{iw-kow} with sieve level $H^{1/4}$, we can find upper and lower sieve weights $\lambda_d^\pm$ such that
\begin{align*}
\lambda_1^\pm&=1,
\\
|\lambda_d^\pm|&\leq 1,
\\
\lambda_d^\pm&=0
\quad\text{when }d\geq H^{1/4},
\end{align*}
and for every integer $n\geq 1$,
\begin{equation}
\label{eqn_flst_4}
\sum_{d\mid W_1}\lambda_d^-\mathbf 1_{d\mid n}
\leq
\mathbf 1_{(W_1,n)=1}
\leq
\sum_{d\mid W_1}\lambda_d^+\mathbf 1_{d\mid n}.
\end{equation}
Furthermore, for any multiplicative
function $g\colon\mathbb N\to\mathbb R$ satisfying $0\leq g(p)<1$ and for any $K\geq 0$ satisfying
\begin{equation}
\label{eqn_flst_5}
\prod_{u\leq p\leq v}(1-g(p))^{-1}
\leq
\frac{\log v}{\log u}
\left(1+\frac{K}{\log u}\right)
\end{equation}
whenever $2\leq u\leq v\leq w$, we have
\begin{equation}
\label{eqn_flst_6}
\sum_{d\mid W}\lambda_d^\pm g(d)
=
\left(
1+
O\left(
\exp\left(-\frac{\log Z}{\log w}\right)
\left(1+\frac{K}{\log w}\right)^{10}
\right)
\right)
\prod_{p\leq w}(1-g(p)).
\end{equation}

Applying \eqref{eqn_flst_4} to $B_{n+hR}$ and averaging over $h$, we get
\[
\frac{1}{H}\sum_{h\leq H}~\sum_{d\mid W_1}\lambda_d^- 1_{d\mid B_{n+hR}}
\leq
\frac{1}{H}\sum_{h\leq H}\mathbf 1_{(W_1,B_{n+hR})=1}
\leq
\frac{1}{H}\sum_{h\leq H}~\sum_{d\mid W_1}\lambda_d^+ 1_{d\mid B_{n+hR}}.
\]
Note that if $(R,d)>1$ then $d\nmid B_x$, because $B_x$ shares no prime factors with $R$ by definition. Also, $\lambda_d^\pm=0$ for $d\geq H^{1/4}$.
Hence the above upper and lower bound can be refined to
\[
\frac{1}{H}\sum_{h\leq H}~\sum_{\substack{d\mid W_1\\ (d,R)=1 \\ d< H^{1/4}}}\lambda_d^- 1_{d\mid B_{n+hR}}
\leq
\frac{1}{H}\sum_{h\leq H}\mathbf 1_{(W_1,B_{n+hR})=1}
\leq
\frac{1}{H}\sum_{h\leq H}~\sum_{\substack{d\mid W_1\\ (d,R)=1 \\ d< H^{1/4}}}\lambda_d^+ 1_{d\mid B_{n+hR}}.
\]

Next, given some positive integer $d$ with $d\mid W_1$ and $(R,d)=1$, we want to study the expression
\[
\frac{1}{H}\sum_{h\leq H} 1_{d\mid B_{n+hR}}.
\]
We first observe that by the definition of $B_n$ we have
\[
\begin{aligned}
d\mid B_{n+hR} &\iff n+hR\bmod p\in A_p ~\text{for all}~p\mid d
\\
&\iff n+hR\bmod d\in A_d,
\end{aligned}
\]
where $A_d=\{x\in \Z/d\Z: x\bmod p\in A_p~\text{for all}~p\mid d\}$.
Since $(d,R)=1$, the map
$h\mapsto n+hR$ is invertible modulo $d$.
In particular, we have
\[
\#\{1\leq h\leq H: d\mid B_{n+hR}\}
=
H\frac{|A_d|}{d}+\mathrm{O}\bigl(d\bigr).
\]
The Chinese remainder theorem therefore gives, uniformly in $n$, that
\[
|A_d|=\prod_{p\mid d} |A_p|,
\]
which ultimately yields
\[
\frac{1}{H}\sum_{h\leq H} 1_{d\mid B_{n+hR}}= \prod_{p\mid d} \frac{|A_p|}{p} + \mathrm{O}\left(\frac{d}{H}\right).
\]
Let $g$ be the multiplicative function defined by
\[
g(p)=
\begin{cases}
|A_p|/p, &p\leq w_1\text{ and }p\nmid R,\\
0,&\text{otherwise}.
\end{cases}
\]
Noting that
\[
\sum_{d< H^{1/4}} \frac{d}{H}\ll \frac{1}{H^{1/2}},
\]
we arrive at
\[
\sum_{d\mid W_1}\lambda_d^- g(d) + \mathrm{O}\left(\frac{1}{H^{1/2}}\right)
\leq
\frac{1}{H}\sum_{h\leq H}\mathbf 1_{(W_1,B_{n+hR})=1}
\leq
\sum_{d\mid W_1}\lambda_d^+ g(d) + \mathrm{O}\left(\frac{1}{H^{1/2}}\right).
\]

If $p\mid\Delta$, then $A_p=\emptyset$, while if $p\mid R$ or $p>w_1$ then $g(p)=0$. If $p\leq w_1$ and $p\nmid\Delta R$, then~\eqref{eqn_Ap_proper} shows that the polynomial appearing in the definition of $A_p$ is not identically zero modulo $p$. Since its degree is at most $k+1$, we have $|A_p|\leq k+1$. Thus $g(p)\leq (k+1)/p$ for every prime $p$. Now, Mertens' estimate gives
\[
\prod_{u\leq p\leq v}(1-g(p))^{-1}
\ll
\left(\frac{\log v}{\log u}\right)^{k+1},
\]
which implies that the condition \eqref{eqn_flst_5} holds with $K\ll(\log w_1)^{k+1}$.
Then \eqref{eqn_flst_6} gives
\[
\frac{1}{H}\sum_{h\leq H}\mathbf 1_{(W_1,B_{n+hR})=1}=
\prod_{p\leq w_1}(1-g(p))+
O\left(\frac{1}{H^{1/2}}+
\exp\left(-\frac{\log H}{4\log w_1}\right)
(\log w_1)^{10k}
\right).
\]
Since $H\geq N^{1/10}$ and in view of \eqref{eq:w1choice},
\[
\frac{\log H}{4\log w_1}\geq \frac{1}{40} (\log N)^{3/4},
\]
we end up with 
\[
\begin{aligned}
\frac{1}{H}\sum_{h\leq H}\mathbf 1_{(W_1,B_{n+hR})=1}
&=\prod_{p\leq w_1}(1-g(p))+
O\left(\exp\bigl(-3(\log N)^{1/2}\bigr)
\right)
\\
&=\prod_{\substack{p\leq w_1\\ p\nmid R}}\left(1-\frac{|A_p|}{p}\right)+
O\left(\exp\bigl(-3(\log N)^{1/2}\bigr)
\right).
\end{aligned}
\]
Finally, we have
\[
C_R
\prod_{\substack{p\leq w_1\\p\nmid R}}
\left(1-\frac{|A_p|}{p}\right)
=
\prod_{\substack{p\leq w_1\\p\nmid R}}\beta_p(m,d)
=
\mathfrak{S}_R^\perp(m,d).
\]
Since
\[
C_R\ll(\log w_1)^{k+1},
\]
multiplication by $C_R$ leaves an error of size $O(\exp(-2(\log N)^{1/2}))$, and the desired conclusion follows.
\end{proof}

\begin{proof}[Proof of Proposition~\ref{prop:sieve-bound}]
Taking $R=1$, $H=N$, and $n=0$ in Lemma~\ref{lem_singular_series_estimate}, summing over $m$ and $d$, and using
\[
\frac m{\varphi(m)}\Lambda(\psi_{-1}(dm))
\ll(\log N)^2,
\]
Proposition~\ref{prop:sieve-bound} follows.    
\end{proof}

Our next task is to give a proof of Proposition~\ref{prop:weil-bound}, which asserts that
\begin{align}
\label{eqn_3rd_main_estimate}
|S_{\mathrm{main}}^{(2)}-S_{\mathrm{main}}^{(2.5)}|  \ll_A \frac{\tau(\mathcal{D})MN}{(\log N)^A},
\end{align}
where
\begin{align*}
S_{\mathrm{main}}^{(2.5)}&=
\sum_{d\mid \mathcal{D}}\sum_{\substack{m\leq M\\ (m,W_1)=1}}\sum_{n\leq N} \tfrac{m}{\varphi(m)} \Lambda(\psi_{-1}(dm))\Lambda_{W_1}(\psi_0(\mathcal{D}mn))\prod_{j=1}^k \Lambda_{W_2}\big(\psi_j\big(dm,\tfrac{\mathcal{D}}{d}n\big)\big)
\\
S_{\mathrm{main}}^{(2)}&=
\sum_{d\mid \mathcal{D}}\sum_{\substack{m\leq M\\ (m,W_1)=1}}\sum_{n\leq N} \tfrac{m}{\varphi(m)} \Lambda(\psi_{-1}(dm))\widetilde{\Lambda}_{W_1}(\psi_0(\mathcal{D}mn))\prod_{j=1}^k \widetilde{\Lambda}_{W_2}\big(\psi_j\big(dm,\tfrac{\mathcal{D}}{d}n\big)\big).
\end{align*}

For any $\mathbf{t}=(t_0,t_1,\ldots,t_k)\in \{0,1\}^{k+1}$, define
\begin{align*}
z_{\mathbf{t}}(n,m,d)&=\big(\psi_0(\mathcal{D}mn)\big)^{t_0(\widetilde{\beta}_1-1)}
\cdot \prod_{j=1}^k \big(\psi_j\big(dm,\tfrac{\mathcal{D}}{d}n\big)\big)^{t_j(\widetilde{\beta}_2-1)}
\\
f_{\mathbf{t}}(n,m,d)&=
\widetilde{\chi}_1\big(\psi_0(\mathcal{D}mn)\big)^{t_0} \Lambda_{W_1}\big(\psi_0(\mathcal{D}mn)\big)
\\
&\qquad\qquad\cdot 
\prod_{j=1}^k  \widetilde{\chi}_2\big(\psi_j\big(dm,\tfrac{\mathcal{D}}{d}n\big)\big)^{t_j} \Lambda_{W_2}\big(\psi_j\big(dm,\tfrac{\mathcal{D}}{d}n\big)\big)
\end{align*}
It is straightforward to check that
\begin{align*}
S_{\mathrm{main}}^{(2.5)}&= \sum_{d\mid \mathcal{D}} \sum_{\substack{m\leq M\\ (m,W_1)=1}}\tfrac{m}{\varphi(m)}\Lambda(\psi_{-1}(dm))\Bigg(\sum_{n\leq N} f_{\mathbf{0}}(n,m,d)\Bigg),
\\
S_{\mathrm{main}}^{(2)}&= \sum_{\mathbf{t}\in \{0,1\}^{k+1}} 
\sum_{d\mid \mathcal{D}} \sum_{\substack{m\leq M\\ (m,W_1)=1}} \tfrac{m}{\varphi(m)}\Lambda(\psi_{-1}(dm))\Bigg(\sum_{n\leq N} (-1)^{|\mathbf{t}|} z_{\mathbf{t}}(n,m,d) f_{\mathbf{t}}(n,m,d)\Bigg),
\end{align*}
where $\mathbf{0}=(0,0,\ldots,0)$ and $|\mathbf{t}|=t_0+t_1+\ldots+t_k$.
Therefore, instead of \eqref{eqn_3rd_main_estimate}, it suffices to show that for every $\mathbf{t}\neq (0,0,\ldots,0)$ and every $d\mid \mathcal{D}$ we have
\begin{align*}
\sum_{\substack{m\leq M\\ (m,W_1)=1}} \tfrac{m}{\varphi(m)} \bigg|\sum_{n\leq N} z_{\mathbf{t}}(n,m,d) f_{\mathbf{t}}(n,m,d)\bigg| \ll_A \frac{MN}{(\log N)^A}.
\end{align*}
Since $m/\varphi(m)\ll\log\log(3N)$, the preceding estimate follows, after increasing $A$, from
\begin{align}
\label{eqn_3rd_main_estimate_2}
\sum_{\substack{m\leq M\\ (m,W_1)=1}} \bigg|\sum_{n\leq N} z_{\mathbf{t}}(n,m,d) f_{\mathbf{t}}(n,m,d)\bigg| \ll_A \frac{MN}{(\log N)^A}.
\end{align}
Since for each $m$ the function
\[
n\mapsto z_{\mathbf{t}}(n,m,d)
\]
is non-increasing and bounded by $1$, with $|z_{\mathbf t}(n+1,m,d)-z_{\mathbf t}(n,m,d)|\leq(k+1)/n$, using summation by parts and increasing $A$, instead of \eqref{eqn_3rd_main_estimate_2} it is enough to prove uniformly for $N'\leq N$ that 
\begin{align*}
\sum_{\substack{m\leq M\\ (m,W_1)=1}}
\bigg|\sum_{n\leq N'} f_{\mathbf{t}}(n,m,d)\bigg|
\ll_A \frac{MN}{(\log N)^A}.
\end{align*}
Since $Q\mid W_1$, the condition $(m,Q)=1$ is weaker than $(m,W_1)=1$. It therefore suffices to prove
\begin{align*}
\sum_{\substack{m\leq M\\ (m,Q)=1}}
\bigg|\sum_{n\leq N'} f_{\mathbf{t}}(n,m,d)\bigg|
\ll_A \frac{MN}{(\log N)^A}.
\end{align*}
This follows from the following lemma.

\begin{lemma}
\label{lem_cramer_approx_1}
Let $A\geq 1$. For any $d\mid \mathcal{D}$, any $\mathbf{t}=(t_0,t_1,\ldots,t_k)\in\{0,1\}^{k+1}$ with $\mathbf{t}\neq (0,0,\ldots,0)$, and any $N'\leq N$ we have 
\begin{align}
\label{eqn_3rd_main_estimate_3}
\sum_{\substack{m\leq M\\ (m,Q)=1}}\left|\sum_{n\leq N'} f_{\mathbf{t}}(n,m,d)\right|
\ll_A \frac{MN}{(\log N)^A}.
\end{align}
\end{lemma}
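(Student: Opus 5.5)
The plan is to separate the part of $f_{\mathbf t}(\cdot,m,d)$ that sees the Siegel characters from the part that sees only primes not dividing $Q$. The first part is periodic with a modest period. The second is handled by Lemma~\ref{lem_singular_series_estimate}. The saving then comes from a complete character sum, bounded with Weil.

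\emph{Step 1 (splitting $n$ into residue classes).} Let $R'=\operatorname{lcm}(\qsiegel_1,\qsiegel_2)$, so $\operatorname{rad}(R')=Q$ and $R'\leq w_1w_2=N^{o(1)}$. For $n\equiv n_0\pmod{R'}$ the following are all functions of $n_0$ alone:
\begin{itemize}
\item the character factor $c_{\mathbf t}(n_0,m,d)=\widetilde\chi_1(\psi_0(\mathcal Dmn_0))^{t_0}\prod_j\widetilde\chi_2(\psi_j(dm,\tfrac{\mathcal D}{d}n_0))^{t_j}$;
\item the local factors $\Lambda_p$ for $p\mid Q$.
\end{itemize}
Hence $f_{\mathbf t}(n_0+hR',m,d)=c_{\mathbf t}(n_0)\,\lambda_Q(n_0)\,g_\perp(n_0+hR',m,d)$. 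Here $\lambda_Q(n_0)$ is the product of the $\Lambda_p$ factors over $p\mid Q$, and $g_\perp$ is as in \eqref{eqn_g_perp} with $R=R'$.

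We may assume $N'\geq N^{1/2}$, since otherwise the bound is trivial. We then apply Lemma~\ref{lem_singular_series_estimate} with $R=R'$ and $H=N'/R'\geq N^{1/10}$ on each class, absorbing an incomplete last block trivially. This gives
\[
\sum_{n\leq N'}f_{\mathbf t}(n,m,d)=\frac{N'}{R'}\,\mathfrak S^\perp_{R'}(m,d)\sum_{n_0\bmod R'}c_{\mathbf t}(n_0)\lambda_Q(n_0)+O\bigl(N(\log N)^{O(1)}e^{-2(\log N)^{1/2}}\bigr).
\]
We have $\mathfrak S^\perp_{R'}\ll(\log N)^{O(1)}$, so everything reduces to showing that the complete sum $\Sigma(m):=\frac{1}{R'}\sum_{n_0\bmod R'}c_{\mathbf t}\lambda_Q$ is $\ll_A(\log N)^{-A}$ uniformly in $m$ coprime to $Q$.

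\emph{Step 2 (factorising and bounding the complete sum).} By the Chinese remainder theorem, $\Sigma(m)$ factors as a product over prime powers $p^a\| R'$ of local sums. Each local sum is
\[
p^{-a}\sum_{x\bmod p^a}\chi_{1,p}(\psi_0(\mathcal Dmx))^{t_0}\prod_j\chi_{2,p}(\psi_j(dm,\tfrac{\mathcal D}{d}x))^{t_j}
\]
times bounded indicator weights. Here $\chi_{i,p}$ is the $p$-component of $\widetilde\chi_i$, taken to be trivial if $p\nmid\qsiegel_i$.

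The key input is that $p\nmid m$ because $(m,Q)=1$, and $p\nmid\mathcal D$ because the $p_i$ were chosen not to divide $Q$ (Definition~\ref{def_choice_of_primes}). So for odd $p\mid R'$ with $p>B$, all forms in play are genuinely linear in $x$ modulo $p$. Their roots are pairwise distinct, because the determinants $a_ib_j-a_jb_i$ and the relevant coefficients are nonzero mod $p$. The local sum is therefore $\sum_x\chi_p(P(x))$ for the quadratic character $\chi_p$ and a squarefree polynomial $P$ of degree at most $k+1$, whenever at least one character at $p$ is active.

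By the Weil bound each such factor is $\leq(k+1)p^{-1/2}$. In the special case where only one form is twisted, the sum is a linear character sum and vanishes exactly. The $2$-part and the primes $p\leq B$ are at most $O(1)$ many, and each contributes a factor $O(1)$.

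\emph{Step 3 (accumulating the saving).} Since $\mathbf t\neq\mathbf 0$, some $\widetilde\chi_i$ with $\qsiegel_i>1$ is active. Every odd prime $p\mid\qsiegel_i$ then contributes a factor $\ll k\,p^{-1/2}$, and all other factors are $\leq 1+O(1/p)$. This gives $|\Sigma(m)|\ll\prod_{p\mid\qsiegel_i,\,p>B}(k+1)p^{-1/2}\ll_\epsilon \qsiegel_i^{-1/2+\epsilon}$. By \eqref{eq:Siegelbound} this is $\ll_A(\log w_i)^{-A}\ll_A(\log N)^{-A/10}$, and summing over $m\leq M$ gives \eqref{eqn_3rd_main_estimate_3}.

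\emph{Main obstacle.} The step I expect to be delicate is checking that at every active prime $p$ the combined argument of the character is not a constant times a square mod $p$. This matters when both $\widetilde\chi_1$ and $\widetilde\chi_2$ are active at the same $p$, so that $\psi_0$ and some $\psi_j$ are multiplied together. One must verify that the root $-(\Delta^2\mathcal Dm)^{-1}$ of $\psi_0$ differs mod $p$ from the roots of the $\psi_j$; otherwise two factors could combine into a square. Failures can only occur for $p$ dividing a bounded integer built from $\Delta$, $a_j$, $b_j$, $dm$ and $\mathcal D$. Since $p\nmid m\mathcal D$, I would first try to show that only boundedly many $p\mid Q$ can fail, and that each failure costs only an $O(1)$ factor.
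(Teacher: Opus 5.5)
Your Step~1 is sound and is essentially what the paper does: it uses periodicity of the character and $Q$-local factors, then applies Lemma~\ref{lem_singular_series_estimate} with $R=Q$. The gap is the target you then reduce to. $\Sigma(m)$ is \emph{not} $\ll_A(\log N)^{-A}$ uniformly in $m$ with $(m,Q)=1$, and your claim in Step~2 that the roots are pairwise distinct modulo $p$ ``because the determinants $a_ib_j-a_jb_i$ are nonzero mod $p$'' is false. As functions of $x$, the roots of $\psi_i(dm,\tfrac{\mathcal D}{d}x)$ and $\psi_j(dm,\tfrac{\mathcal D}{d}x)$ coincide modulo $p$ exactly when $p\mid \Delta dm(a_jb_i-a_ib_j)+b_i-b_j$. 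Those of $\psi_0(\mathcal Dmx)$ and $\psi_j(dm,\tfrac{\mathcal D}{d}x)$ coincide exactly when $p\mid a_j\Delta^2d^2m^2+\Delta dm-b_j$. Both conditions depend on $m$.

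For example, take $t_0=0$ and $J=\{i,j\}$ with $b_i\neq b_j$. For each prime $p\mid\qsiegel_2$ with $p>B$, the congruence $\Delta dm(a_jb_i-a_ib_j)+b_i-b_j\equiv0\pmod{p}$ has a unique solution, and it is nonzero modulo $p$. By the Chinese remainder theorem there are $m\leq M$ with $(m,Q)=1$ satisfying all of these at once, since $Q=N^{o(1)}$. For such $m$ we get $\psi_i\psi_j\equiv c(x-r)^2\pmod{p}$ at every such $p$. Each normalised local factor is then $\eta_p(c)+O_k(1/p)$, so $\Sigma(m)$ has no saving at all. Your remedy in the ``Main obstacle'' paragraph therefore cannot work. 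The failing primes are the $p\mid Q$ dividing the $m$-dependent integer $P_d(m)=\prod_j(a_j\Delta^2d^2m^2+\Delta dm-b_j)\prod_{i<j}(\Delta dm(a_jb_i-a_ib_j)+b_i-b_j)$. For suitable $m$ these are all primes of $\qsiegel_i$ above $B$, which destroys the only source of saving even when $\qsiegel_i$ is prime.

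The missing idea is that the saving must be extracted on average over $m$, not pointwise. Since $\Sigma(m)$ depends only on $m$ modulo $Q$ (or $R'$), keep the outer sum and write $\sum_{m\leq M,\,(m,Q)=1}|\Sigma(m)|=\frac{M}{Q}\sum_{m\bmod Q,\,(m,Q)=1}|\Sigma(m)|+O(Q(\log N)^{O(1)})$. Let $\Sigma_p$ denote the local factors of $\Sigma$. Because $|\Sigma(m)|$ is the product of the absolute values of its local factors, the Chinese remainder theorem gives $\sum_{m\bmod Q,\,(m,Q)=1}|\Sigma(m)|=\prod_{p\mid Q}\sum_{m\bmod p,\,p\nmid m}|\Sigma_p(m)|$.

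At an active prime $p$ we have $p\nmid\Delta\mathcal D m$. There $P_d$ is a nonzero polynomial of degree $O_k(1)$ modulo $p$, so only $O_k(1)$ residues $m$ are bad, and each contributes $O_k(1)$ trivially. The remaining residues contribute $O_k(p^{-1/2})$ each by Weil. This gives an average factor $O_k(p^{-1/2})$ per active prime, hence a total saving of $\qsiegel_i^{-1/2+\varepsilon}$ up to $(\log N)^{O(1)}$. This is exactly how the paper argues, via the double $Q$-periodicity of $g_{\mathbf t}$ in $n$ and $m$.

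A minor point: with the $\Lambda_p$ weights present, your singly twisted linear sums are $O_k(1)$ rather than exactly zero, which is harmless.
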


\begin{proof}[Proof of Lemma~\ref{lem_cramer_approx_1}]
Let $R_i=(Q,W_i)$ and define $W_i^\perp=\frac{W_i}{R_i}$; note that $R_1=Q$ since $Q\mid W_1$.
Then
\[
\Lambda_{W_i}(n)
=
\Lambda_{R_i}(n)\Lambda_{W_i^\perp}(n),
\]
and hence
\[
f_{\mathbf{t}}(n,m,d)
=g_{\mathbf{t}}(n,m,d)\,g_\perp(n,m,d)
\]
where
\begin{align*}
g_{\mathbf{t}}&= \widetilde{\chi}_1\big(\psi_0(\mathcal{D}mn)\big)^{t_0} \Lambda_{R_1}\big(\psi_0(\mathcal{D}mn)\big)
\prod_{j=1}^k  \widetilde{\chi}_2\big(\psi_j\big(dm,\tfrac{\mathcal{D}}{d}n\big)\big)^{t_j} \Lambda_{R_2}\big(\psi_j\big(dm,\tfrac{\mathcal{D}}{d}n\big)\big)
\\
g_\perp(n,m,d)&= \Lambda_{W_1^\perp}\big(\psi_0(\mathcal{D}mn)\big)
\prod_{j=1}^k \Lambda_{W_2^\perp}\big(\psi_j\big(dm,\tfrac{\mathcal{D}}{d}n\big)\big).
\end{align*}

Since $\qsiegel_1$ divides $\Delta Q$ and $\widetilde{\chi}_1(n)$ is a $\qsiegel_1$-periodic function, we have 
\[
\widetilde{\chi}_1(\psi_0(\mathcal{D}m(n+Q)))=\widetilde{\chi}_1(\psi_0(\mathcal{D}(m+Q)n))=\widetilde{\chi}_1(\psi_0(\mathcal{D}mn)).
\]
In other words, $(n,m)\mapsto \widetilde{\chi}_1(\psi_0(\mathcal{D}mn))$ is $Q$-periodic in both variables.
Similarly, we can show that $(n,m)\mapsto \widetilde{\chi}_2(\psi_j\big(dm,\tfrac{\mathcal{D}}{d}n\big))$ is $Q$-periodic.
Moreover, $R_i\mid Q$, so the factors involving $\Lambda_{R_i}$ are also $Q$-periodic. This shows that $(n,m)\mapsto g_{\mathbf{t}}(n,m,d)$ is a $Q$-periodic function, i.e.,
\[
g_{\mathbf{t}}(n+Q,m,d)=g_{\mathbf{t}}(n,m+Q,d)=g_{\mathbf{t}}(n,m,d).
\]
In particular, for every $h$ we get
\[
\begin{aligned}
\sum_{n\leq N'}g_{\mathbf{t}}(n,m,d)& g_\perp(n,m,d)
\\
&=\sum_{n\leq N'}g_{\mathbf{t}}(n,m,d)g_\perp(n+hQ,m,d)+\mathrm{O}\left(hQ (\log N)^{k+1}\right).
\end{aligned}
\]
Since $Q\leq \exp(2(\log N)^{1/4})$ (see \eqref{eqn:Qbound}), 
by taking $H=\lfloor N^{1/2}\rfloor$ and averaging over all $h\leq H$, this gives
\[
\begin{aligned}
\sum_{n\leq N'}g_{\mathbf{t}}(n,m,d) &  g_\perp(n,m,d)
\\
&=\sum_{n\leq N'}g_{\mathbf{t}}(n,m,d)
\left(\frac1H\sum_{h\leq H}g_\perp(n+hQ,m,d)\right)
+
\mathrm{O}\big(N^{3/4}\big).
\end{aligned}
\]

In light of Lemma~\ref{lem_singular_series_estimate}, applied with $R=Q$, we have uniformly in $n$ that
\[
\frac{1}{H}\sum_{h\leq H}g_\perp(n+hQ,m,d)
=
\mathfrak S_Q^\perp(m,d)
+
O\left(\exp\bigl(-(\log N)^{1/2}\bigr)\right).
\]
Moreover, $\mathfrak S_Q^\perp(m,d)\ll (\log N)^{k+1}$.
Thus, after increasing $A$ by a fixed amount, instead of \eqref{eqn_3rd_main_estimate_3} it suffices to show
\begin{align}
\label{eqn_3rd_main_estimate_5}
\sum_{\substack{m\leq M\\ (m,Q)=1}}\left|\sum_{n\leq N'}
g_{\mathbf{t}}(n,m,d)\right|
\ll_A \frac{MN}{(\log N)^A}.
\end{align}

By $Q$-periodicity, we have
\[
\sum_{\substack{m\leq M\\ (m,Q)=1}}\left|\sum_{n\leq N'}g_{\mathbf{t}}(n,m,d)\right|
=
\frac{MN'}{Q^2}
\sum_{\substack{m\bmod Q\\ (m,Q)=1}}
\left|\sum_{n\bmod Q}g_{\mathbf{t}}(n,m,d)\right|
+
O\left(
(M+N')Q (\log N)^{k+1}
\right).
\]
Using once more $Q\leq \exp(2(\log N)^{1/4})$, the second term is negligible, which reduces \eqref{eqn_3rd_main_estimate_5} to
\begin{align}
\label{eqn_3rd_main_estimate_6}
\sum_{\substack{m\bmod Q\\ (m,Q)=1}}\left|\sum_{n\bmod Q}
g_{\mathbf{t}}(n,m,d)\right|
\ll_A \frac{Q^2}{(\log N)^A}.
\end{align}

If $t_0=1$ and $\widetilde{\chi}_1\equiv0$, or if $t_j=1$ for some $1\leq j\leq k$ and $\widetilde{\chi}_2\equiv0$, then $f_{\mathbf t}\equiv0$ and there is nothing to prove. We may therefore assume that every character selected by $\mathbf t$ is non-zero.

If $t_0=0$, we interpret all local factors involving $\widetilde{\chi}_1$ in the displayed identities below as $1$. Suppose now that $t_0=1$. Since $\qsiegel_1$ has even part either $1$, $4$, or $8$ and squarefree odd part, we can decompose $(\Z/\qsiegel_1\Z)^\times$ as
\[
(\Z/\qsiegel_1\Z)^\times
\cong
(\Z/2^\nu\Z)^\times\times
\prod_{\substack{p\mid \qsiegel_1\\ p~\text{odd}}}
(\Z/p\Z)^\times,
\]
with $\nu\leq 3$.
Hence the primitive quadratic Siegel character $\widetilde{\chi}_1(n)\colon (\Z/\qsiegel_1\Z)^\times\to\{-1,1\}$ splits into a product of local characters
\begin{equation*}
\widetilde{\chi}_1(n)
=
\widetilde{\chi}_{1,2}(n) \cdot 
\prod_{\substack{p\mid \qsiegel_1\\ p~\text{odd}}}
\widetilde{\chi}_{1,p}(n),
\end{equation*}
where $\widetilde{\chi}_{1,2}$ is the $2$-primary component, and for every odd prime $p\mid\qsiegel_1$ the character $\widetilde{\chi}_{1,p}\colon (\Z/p\Z)^\times\to\{1,-1\}$ is a non-principal quadratic character modulo $p$.
In particular this gives
\begin{equation}
\label{eqn_local_decomp_chi_1}
\widetilde{\chi}_1 \big(\psi_0(\mathcal{D}mn)\big)^{t_0}
=
\widetilde{\chi}_{1,2}\big(\psi_0(\mathcal{D}mn)\big)^{t_0} \cdot 
\prod_{\substack{p\mid \qsiegel_1\\ p~\text{odd}}}
\widetilde{\chi}_{1,p}\big(\psi_0(\mathcal{D}mn)\big)^{t_0}.
\end{equation}
We can immediately simplify this expression. Since $\Delta$ is a multiple of $8$, we have $\widetilde{\chi}_{1,2}\big(\psi_0(\mathcal{D}mn)\big)^{t_0}=1$, allowing us to ignore this term.
Also, for $p\mid\Delta$ we have $\widetilde{\chi}_{1,p}\big(\psi_0(\mathcal{D}mn)\big)^{t_0}=1$ and hence these terms can also be omitted from the expression.
Finally, since  for an odd prime $p$, $(\Z/p\Z)^\times$ admits only one non-principal quadratic character, namely the Legendre symbol, which we denote by $\eta_p$, we must have
\[
\widetilde{\chi}_{1,p}=\eta_p.
\] 
Therefore \eqref{eqn_local_decomp_chi_1} simplifies to
\begin{align}
\label{eqn_local_decomp_chi_2}
\widetilde{\chi}_1 \big(\psi_0(\mathcal{D}mn)\big)^{t_0}
= 
\prod_{\substack{p\mid \qsiegel_1\\ p\nmid\Delta}}
\eta_p\big(\psi_0(\mathcal{D}mn)\big)^{t_0}.
\end{align}
By the above convention, \eqref{eqn_local_decomp_chi_2} also holds when $t_0=0$, with both sides equalling $1$.

Using $\Lambda_{R_1}(n)=\prod_{p\mid R_1}\Lambda_p(n)$ and noting that if $p\mid \Delta$ then $\Lambda_p\big(\psi_0(\mathcal{D}mn)\big)=\frac{p}{p-1}$, we can also write
\begin{align*}
\Lambda_{R_1}\big(\psi_0(\mathcal{D}mn)\big)= \Bigg(\prod_{p\mid(\Delta,R_1)} \frac{p}{p-1}\Bigg) \Bigg(\prod_{\substack{p\mid R_1\\ p\nmid\Delta}}\Lambda_p\big(\psi_0(\mathcal{D}mn)\big)\Bigg).
\end{align*}
Using analogous arguments, we can also express $\widetilde{\chi}_2$ and $\Lambda_{R_2}$ in similar product form, which lets us rewrite $g_{\mathbf{t}}(n,m,d)$ as 
\begin{align*}
g_{\mathbf{t}}(n,m,d)= E_0\prod_{\substack{p\mid Q\\ p\nmid\Delta}}
g_{\mathbf{t},p}(n,m,d),
\end{align*}
where $E_0= \prod_{p\mid(\Delta,R_1)} \frac{p}{p-1}\cdot \prod_{p\mid(\Delta,R_2)} \big(\frac{p}{p-1}\big)^k$ is a constant and
\begin{align*}
g_{\mathbf{t},p}(n,m,d)=\eta_p&\big(\psi_0(\mathcal{D}mn)\big)^{t_0 \cdot 1_{p\mid\qsiegel_1}} \Lambda_p\big(\psi_0(\mathcal{D}mn)\big)^{1_{p\mid R_1}}
\\
&\cdot \prod_{j=1}^k \eta_p\big(\psi_j\big(dm,\tfrac{\mathcal{D}}{d}n\big)\big)^{t_j\cdot 1_{p\mid \qsiegel_2}} \Lambda_{p}\big(\psi_j\big(dm,\tfrac{\mathcal{D}}{d}n\big)\big)^{1_{p\mid R_2}}.
\end{align*}
By the Chinese remainder theorem, we now have
\begin{align*}
\sum_{\substack{m\bmod Q\\ (m,Q)=1}}\left|\sum_{n\bmod Q}
g_{\mathbf{t}}(n,m,d)\right|
\ll \prod_{\substack{p\mid Q\\ p\nmid\Delta}}\left(\sum_{\substack{m\bmod p\\ (m,p)=1}}\left| \sum_{n\bmod p}
g_{\mathbf{t},p}(n,m,d)\right|\right).
\end{align*}
This reduces \eqref{eqn_3rd_main_estimate_6} to
\begin{align}
\label{eqn_3rd_main_estimate_7}
\prod_{\substack{p\mid Q\\ p\nmid\Delta}}\left(\sum_{\substack{m\bmod p\\ (m,p)=1}}\left|
 \sum_{n\bmod p}
g_{\mathbf{t},p}(n,m,d)\right|
\right)
\ll_{A} \frac{Q^2}{(\log N)^A}.
\end{align}

We prove \eqref{eqn_3rd_main_estimate_7} separately according to the value of $t_0$.

\smallskip
\noindent\textit{The case $t_0=1$.}

We claim that for all primes $p\mid \qsiegel_1$ with $p\nmid\Delta$ we have
\begin{align}
\label{eqn_3rd_main_estimate_7-2}
\sum_{\substack{m\bmod p\\ (m,p)=1}}\left| \sum_{n\bmod p}
g_{\mathbf{t},p}(n,m,d)\right|\leq C_kp^{3/2}
\end{align}
for some $C_k>0$.

We now prove~\eqref{eqn_3rd_main_estimate_7-2}.
Let $J=\{1\leq j\leq k: t_j=1\}$, and consider
\begin{align*}
g_{\mathbf{t},p}^\bullet(n,m,d)=
\begin{cases}
\eta_p\Big(\big(\psi_0(\mathcal{D}mn)\big) \cdot \prod_{j\in J} \big(\psi_j\big(dm,\tfrac{\mathcal{D}}{d}n\big)\big)\Big),&\text{if}~p\mid \qsiegel_2;
\\
\eta_p\big(\psi_0(\mathcal{D}mn)\big),&\text{otherwise}.
\end{cases}
\end{align*}
We have
\[
\sum_{n\bmod p}
\left| \Lambda_p\big(\psi_0(\mathcal{D}mn)\big)-\frac{p}{p-1}\right|\ll 1
\quad\text{and}\quad
\sum_{n\bmod p}
\left|\Lambda_p\big(\psi_j\big(dm,\tfrac{\mathcal{D}}{d}n\big)\big)-\frac{p}{p-1}\right|\ll 1,
\]
since each affine-linear form occurring here is non-constant modulo $p$ and replacing one factor $\Lambda_p(L(n))$ by $p/(p-1)$ changes the
sum over $n$ by $O_k(1)$, since the difference is supported on the unique zero of $L$. Repeating this for every local von Mangoldt
factor gives
\[
\left|\sum_{n\bmod p}g_{\mathbf t,p}(n,m,d)\right|
\ll\left|\sum_{n\bmod p}g_{\mathbf t,p}^\bullet(n,m,d)\right|+1,
\]
and hence \eqref{eqn_3rd_main_estimate_7-2} follows from
\begin{align}
\label{eqn_3rd_main_estimate_7-3}
\sum_{\substack{m\bmod p\\ (m,p)=1}}\left| \sum_{n\bmod p}
g_{\mathbf{t},p}^\bullet(n,m,d)\right|\ll p^{3/2}.
\end{align}

An important role in the proof of \eqref{eqn_3rd_main_estimate_7-3} is played by the polynomial
\[
P_d(X)
=
\prod_{j=1}^k
\bigl(a_j\Delta^2d^2X^2+\Delta dX-b_j\bigr)
\cdot
\prod_{1\leq i<j\leq k}
\bigl(\Delta dX(a_jb_i-a_ib_j)+b_i-b_j\bigr).
\]
Since $p\mid Q$ and every prime factor of $\mathcal D$ was chosen not to divide $Q$, we have $p\nmid\mathcal D$, and hence $p\nmid d$. We also have $p \nmid \Delta$ by assumption and $p\nmid m$ because $(m,p)=1$. The definition of $\Delta$ gives $p\nmid b_j$ and $p\nmid a_jb_i-a_ib_j$ whenever $i\neq j$. Hence every factor defining $P_d$ is a non-zero polynomial over $\Z/p\Z$, so $P_d$ has $O_k(1)$ roots modulo $p$. 
In particular, we have
\begin{align*}
\sum_{\substack{m\bmod p\\ (m,p)=1}}\left| \sum_{n\bmod p}
g_{\mathbf{t},p}^\bullet(n,m,d)\right|
\ll 
\sum_{\substack{m\bmod p\\ (m,p)=1\\ P_d(m)\not\equiv 0\bmod p}}\left| \sum_{n\bmod p}
g_{\mathbf{t},p}^\bullet(n,m,d)\right|+O_k\left(p\right).
\end{align*}
Consequently, \eqref{eqn_3rd_main_estimate_7-3} follows from
\begin{align}
\label{eqn_3rd_main_estimate_7-4}
\max_{\substack{(m,p)=1\\ P_d(m)\not\equiv 0\bmod p}} \left| \sum_{n\bmod p}
g_{\mathbf{t},p}^\bullet(n,m,d)\right|\ll p^{1/2}.
\end{align}

To show \eqref{eqn_3rd_main_estimate_7-4}, fix $m$ with $(m,p)=1$ and  $P_d(m)\not\equiv 0\bmod p$.
As observed above, $p\nmid\Delta\mathcal D$. Moreover, $p$ is coprime to $m$ because $(m,p)=1$, and the definition of $\Delta$ ensures that $p\nmid b_j$ for every $1\leq j\leq k$. It follows that the affine-linear forms
\[
n\mapsto \psi_0(\mathcal{D}mn)\bmod p
\]
and
\[
n\mapsto \psi_j\big(dm,\tfrac{\mathcal{D}}{d}n\big)\bmod p
\]
are all non-constant.

Two non-constant affine-linear forms have a common root if and only if the corresponding determinant vanishes. A straightforward calculation shows that the determinant of $n\mapsto \psi_0(\mathcal{D}mn)\bmod p$ and $n\mapsto \psi_j\big(dm,\tfrac{\mathcal{D}}{d}n\big)\bmod p$ equals
\[
\Delta^3m^2\mathcal{D}a_jd+\Delta^2\mathcal{D}m-\tfrac{\Delta \mathcal{D}b_j}{d}=\tfrac{\Delta \mathcal{D}}{d} \big(a_j\Delta^2d^2m^2+\Delta d m-b_j\big),
\]
which does not vanish because $\Delta^2m^2d^2a_j+\Delta d m-b_j$ is a factor of the polynomial $P_d(m)$ and we have $P_d(m)\not\equiv 0\bmod p$.

The determinant of $n\mapsto \psi_i\big(dm,\tfrac{\mathcal{D}}{d}n\big)\bmod p$ and $n\mapsto \psi_j\big(dm,\tfrac{\mathcal{D}}{d}n\big)\bmod p$ is
\[
\tfrac{\Delta \mathcal{D}}{d} \big(\Delta dm(a_jb_i-a_ib_j)+b_i-b_j \big)
\]
which is proportional to another factor of  $P_d(m)$ and hence also non-vanishing mod~$p$.

We conclude that the argument of $\eta_p$ in the definition of $g_{\mathbf{t},p}^\bullet$ is a product of non-constant affine-linear forms that have pairwise distinct roots. In particular, it is not a constant multiple of a square in $\mathbb{F}_p[n]$.
By Weil's bound for character sums~\cite[Theorem~11.23]{iw-kow}, we have
\[
\begin{cases}
\left|
\sum_{n\bmod p}
\eta_p\Big(
\big(\psi_0(\mathcal{D}mn)\big)
\prod_{j\in J}
\big(\psi_j\big(dm,\tfrac{\mathcal{D}}{d}n\big)\big)
\Big)
\right|
\ll p^{1/2},&\text{if}~p\mid\qsiegel_2;
\\
\left|
\sum_{n\bmod p}
\eta_p
\big(\psi_0(\mathcal{D}mn)\big)
\right|
\ll p^{1/2},&\text{otherwise}.
\end{cases}
\]
This proves~\eqref{eqn_3rd_main_estimate_7-4}, and hence~\eqref{eqn_3rd_main_estimate_7-2}.

This gives
\[
\prod_{\substack{p\mid \qsiegel_1\\ p\nmid\Delta}}
\left(\sum_{\substack{m\bmod p\\ (m,p)=1}}\left|
 \sum_{n\bmod p}
g_{\mathbf{t},p}(n,m,d)\right|
\right)
\ll_\varepsilon\qsiegel_1^{3/2+\varepsilon},
\]
where we used $C_k^{\omega(\qsiegel_1)}\ll_{k,\varepsilon} \qsiegel_1^{\varepsilon}$.
Since the contribution of primes not dividing $\qsiegel_1$ can be trivially bounded by
\[
\prod_{\substack{p\mid Q\\p\nmid\Delta\\p\nmid  \qsiegel_1}}
\left(\sum_{\substack{m\bmod p\\ (m,p)=1}}\left|
 \sum_{n\bmod p}
g_{\mathbf{t},p}(n,m,d)\right|
\right)
\ll (\log N)^{k+1} \left(\frac{Q}{\qsiegel_1}\right)^2,
\]
we obtain
\begin{align*}
\prod_{\substack{p\mid Q\\ p\nmid\Delta}}\left(\sum_{\substack{m\bmod p\\ (m,p)=1}}\left|
 \sum_{n\bmod p}
g_{\mathbf{t},p}(n,m,d)\right|
\right)
\ll_\varepsilon \frac{Q^2(\log N)^{k+1}}{\qsiegel_1^{1/2-\varepsilon}}.
\end{align*}
Taking $\varepsilon=1/4$ and using \eqref{eq:Siegelbound} proves \eqref{eqn_3rd_main_estimate_7} when $t_0=1$.

\smallskip
\noindent\textit{The case $t_0=0$.}

Let $J=\{1\leq j\leq k\colon t_j=1\}$.
Since $t_0=0$ and $\mathbf t\neq\mathbf0$, the set $J$ is non-empty.
Let $p\mid\qsiegel_2$ with $p\nmid\Delta$, and let $(m,p)=1$. Our choice of $\mathcal D$ gives $p\nmid\mathcal D$, while $p\nmid\Delta$ implies $p\nmid b_j$ for every $j$. Hence all the affine-linear forms occurring in $g_{\mathbf t,p}(n,m,d)$ are non-constant as functions of $n$ modulo $p$.

We replace their local von Mangoldt factors by $p/(p-1)$ one at a time. At each step the difference is supported on the unique root
of one of these forms, so
\[
\sum_{n\bmod p}g_{\mathbf t,p}(n,m,d)
=\left(\frac{p}{p-1}\right)^{k+1}\sum_{n\bmod p}\eta_p\left(\prod_{j\in J}\psi_j\big(dm,\tfrac{\mathcal D}{d}n\big)\right)+O_k(1).
\]
The polynomial $P_d$ is non-zero modulo $p$ and has degree $O_k(1)$, so it has $O_k(1)$ roots modulo $p$. If $P_d(m)\not\equiv0\pmod p$, the affine-linear factors in the character sum are non-constant and have pairwise distinct roots. Their product is therefore not a constant multiple of a square, and Weil's bound gives
\[
\left|\sum_{n\bmod p}g_{\mathbf t,p}(n,m,d)\right| \ll_k p^{1/2}.
\]
For each of the $O_k(1)$ values of $m$ satisfying $P_d(m)\equiv0\pmod p$, the trivial bound is $O_k(p)$. It follows
that
\[
\sum_{\substack{m\bmod p\\(m,p)=1}}\left|\sum_{n\bmod p}g_{\mathbf t,p}(n,m,d)\right|\ll_k p^{3/2}.
\]

For every prime $p\mid Q$ with $p\nmid\Delta$, we also have the trivial estimate
\[
\sum_{\substack{m\bmod p\\(m,p)=1}}\left|\sum_{n\bmod p}g_{\mathbf t,p}(n,m,d)\right|
\leq p^2\left(\frac{p}{p-1}\right)^{k+1}.
\]
Combining these two estimates gives for some $C_k>0$ the bound
\[
\begin{aligned}
\prod_{\substack{p\mid Q\\p\nmid\Delta}} \left(\sum_{\substack{m\bmod p\\(m,p)=1}} \left|\sum_{n\bmod p}g_{\mathbf t,p}(n,m,d)\right|\right)\ll_k C_k^{\omega(\qsiegel_2)}Q^2 \prod_{\substack{p\mid Q\\p\nmid\Delta}} \left(1-\frac{1}{p}\right)^{-k-1} \prod_{\substack{p\mid\qsiegel_2\\p\nmid\Delta}}p^{-1/2}. \end{aligned}
\]
The odd part of $\qsiegel_2$ is squarefree and its $2$-part is at most $8$, so we have
\[
\prod_{\substack{p\mid\qsiegel_2\\p\nmid\Delta}}p \asymp\qsiegel_2.
\]
Mertens' estimate and
$C_k^{\omega(\qsiegel_2)} \ll_{k,\varepsilon}\qsiegel_2^\varepsilon$ 
therefore give
\[
\prod_{\substack{p\mid Q\\p\nmid\Delta}} \left(\sum_{\substack{m\bmod p\\(m,p)=1}}
\left|\sum_{n\bmod p}g_{\mathbf t,p}(n,m,d)\right|\right) \ll_{k,\varepsilon} \frac{Q^2(\log N)^{k+1}}{\qsiegel_2^{1/2-\varepsilon}}.
\]
Taking $\varepsilon=1/4$ and applying \eqref{eq:Siegelbound} with a sufficiently large exponent proves~\eqref{eqn_3rd_main_estimate_7} when $t_0=0$.
\end{proof}

\section{Proof of last main estimate}
\label{sec_proof_last_estimate}

The final missing piece is the proof of Proposition~\ref{prop_last}.

\begin{proof}[Proof of part~\ref{itm_last_i} of Proposition~\ref{prop_last}]
To obtain a lower bound for $\mathfrak{S}(m,d)$, we require lower bounds for $\beta_p(m,d)$ for primes $p\leq w_1$. For sufficiently large $N$, every prime dividing $\Delta\mathcal D$ is at most $w_2$. If $p\mid\Delta$, then all the forms in~\eqref{def_beta_dm} are congruent to $1$ modulo $p$, and hence
\[
\beta_p(m,d)=\left(1-\frac{1}{p}\right)^{-k-1}\geq 1.
\]
If $p\mid\mathcal D$ and $p\nmid\Delta$, then $p\mid d$ and $\psi_0(\mathcal Dmx)\equiv1\pmod p$. Each of the other forms is either congruent to $1$ or is non-constant modulo $p$, so
\[
\beta_p(m,d)\geq \frac{p-k}{p(1-\frac{1}{p})^{k+1}}\gg_k 1.
\]
Since $\Delta$ is fixed and $\mathcal D$ has $\ell$ distinct prime divisors, the product over $p\mid\Delta\mathcal D$ is bounded below uniformly.  Hence it suffices to bound $\beta_p(m,d)$ for primes $p\leq w_1$ with $p\nmid \Delta\mathcal{D}$.
Note that since $p\leq w_1$ and $(m,W_1)=1$, we are always guaranteed that $p\nmid m$.

If $w_2<p\leq w_1$ then 
\[
\# \big\{x\in \mathbb{Z}/p\mathbb{Z}:
p\nmid \psi_0(\mathcal{D}mx) \big\}=p-1,
\]
and hence
\[
\beta_p(m,d)=\frac{p-1}{p(1-\frac{1}{p})}=1.
\]
On the other hand, when $p\leq w_2$, 
\[
\# \big\{x\in \mathbb{Z}/p\mathbb{Z}:
p\nmid \psi_0(\mathcal{D}mx)\prod_{j=1}^k \psi_j\big(dm,\tfrac{\mathcal{D}}{d}x\big)\big\}\geq p-k-1
\]
and so
\[
\beta_p(m,d)\geq \frac{p-k-1}{p(1-\frac{1}{p})^{k+1}}\geq  1-\frac{(k+1)^2}{p^2}.
\]
Note that $p\nmid \Delta$ implies $p>k+1$ (cf.~Definition~\ref{def_Delta}).
This shows that
\[
\mathfrak{S}(m,d)\gg \prod_{k+3<p\leq w_1} \left(1-\frac{(k+1)^2}{p^2}\right)\gg 1,
\]
as desired.
\end{proof}

\begin{proof}[Proof of part~\ref{itm_last_ii} of Proposition~\ref{prop_last}]
Let $\omega(m)$ denote the number of distinct prime factors of $m$. Note that since $m\leq M\leq N$, we have $\omega(m)\ll \log N$.
We can estimate
\begin{align*}
\frac{\varphi(m)}{m} &= \prod_{p\mid m} \left(1-\frac{1}{p}\right)
\\
&\geq \left(1-\frac{1}{w_1}\right)^{\omega(m)}
\\
&\geq \left(1-\frac{1}{w_1}\right)^{\log N}
\\
&\geq 1-\frac{\log N}{w_1},
\end{align*}
where the last inequality follows from Bernoulli's inequality. Since by \eqref{eq:w1choice} we have $\log N/w_1= (\log N) \exp(-(\log N)^{1/4})\ll \exp(-(\log N)^{1/5})$, we are done.
\end{proof}

\begin{proof}[Proof of part~\ref{itm_last_iii} of Proposition~\ref{prop_last}]
Fix $d\mid\mathcal{D}$.
Define 
\begin{equation}
\label{eq:w0choice}
w_0=\exp((\log N)^{1/3})\qquad\text{and}\qquad W_0=\prod_{p\leq w_0} p,
\end{equation}
and let
\[
\widetilde{\Lambda}_{W_0}(m)=
\Lambda_{W_0}(m)\bigl(1-\widetilde{\chi}_0(m)m^{\widetilde{\beta}_0-1}\bigr)
\]
be the Siegel model at level $w_0$ (see Definition~\ref{def:Siegelmodel}).

We make the following two claims:

\begin{claim}
\label{claim1}
We have
\[
\left|\sum_{\substack{m\leq M\\ (m,W_1)=1}}  (\Lambda - \widetilde{\Lambda}_{W_0})(\psi_{-1}(dm))\right|\ll M \exp\bigl(-\frac{1}{1000}(\log N)^{1/12}\bigr).
\]
\end{claim}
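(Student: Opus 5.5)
The plan is to reduce Claim~\ref{claim1} to Lemma~\ref{le:BV}, used in its simplest configuration: a single modulus in place of the $m$-sum, with $M'=1$. The summand is $(\Lambda-\widetilde{\Lambda}_{W_0})(\Delta dm+1)$, restricted to $m\leq M$ with $(m,W_1)=1$. The first step is to remove the coprimality condition with a fundamental-lemma sieve, in the form \eqref{eqn_flst_4} from the proof of Lemma~\ref{lem_singular_series_estimate}. Since $w_1$ is much larger than $w_0$, I would not sieve all the way up to $w_1$ directly. Instead I would write
\[
1_{(m,W_1)=1}=\sum_{e\mid W_1}\mu(e)1_{e\mid m}
\]
and truncate with sieve weights $\lambda_e^{\pm}$ of level $z=\exp((\log N)^{1/2})$, say. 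This turns the sum into a combination of sums
\[
\sum_{\substack{m\leq M\\ e\mid m}}(\Lambda-\widetilde{\Lambda}_{W_0})(\Delta dm+1),\qquad e\leq z.
\]
The sieve error would be controlled by bounding the positive quantity $\Lambda+\widetilde{\Lambda}_{W_0}$. Because the sifting range $w_1$ exceeds the sieve level, I would use an upper-bound sieve in two variables, or Selberg's sieve, for the primality of $\Delta dm+1$ together with $m$ having no prime factor in $(z,w_1]$. This should give a saving of about $\log z/\log w_1$, which is not small. That is a genuine obstacle.

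To get around it, I would instead keep the full condition $(m,W_1)=1$ and detect it through the modulus. The condition $(m,W_1)=1$ is the same as requiring $n=\Delta dm+1$ to satisfy $n\equiv 1\pmod{\Delta d}$ and $\bigl((n-1)/(\Delta d),W_1\bigr)=1$. For primes $p\nmid\Delta d$ this second condition reads $n\not\equiv 1\pmod p$. So the sum becomes a sum of $(\Lambda-\widetilde{\Lambda}_{W_0})(n)$ over $n\leq\Delta dM+1$ with $n\equiv1\pmod{\Delta d}$, avoiding one residue class modulo each prime $p\leq w_1$. I would expand this exclusion by a fundamental lemma with level $z$ over the primes $p\leq w_0^{1/2}$, say.

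The primes in $(w_0^{1/2},w_1]$ are the hard part. For $\Lambda$, the condition $n\not\equiv1\pmod p$ removes a proportion of about $\sum 1/(p-1)$ relative to equidistribution. The model $\widetilde{\Lambda}_{W_0}$ does not see these primes, since $\Lambda_{W_0}$ only involves $p\leq w_0$. So the two sides differ by a factor $\prod_{w_0<p\leq w_1}(1-1/(p-1))\neq1$, and the claim as literally stated seems to require compensation. The most likely intended reading is that the restriction is absorbed: $\widetilde{\Lambda}_{W_0}$ evaluated at $\psi_{-1}(dm)$ on $(m,W_1)=1$ carries exactly the same density loss, because both $\Lambda$ and the model vanish on $p\mid n$ for small $p$, while the condition $p\nmid m$ is a condition on $n-1$.

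Both sides should therefore be compared to the same main term. For each modulus $q=\Delta d e$ with $e\mid W_1$ and $e\leq z$, apply Lemma~\ref{le:BV}, or rather its principal part \eqref{eq:Lambda-conclusion} combined with \eqref{eq:model-untwisted-principal} and \eqref{eq:model-twisted-principal} with $m=1$ and $w=w_0$, in the case $q\leq w_0$. Both $\Lambda$ and $\widetilde{\Lambda}_{W_0}$ then have the same sum over $n\equiv1\pmod q$, up to $O(Xw_0^{-1})$. This holds because the Siegel terms match by the Landau--Page argument; this is exactly where the level $w_0$ and the condition $q\leq w_0$ are needed. The moduli $e$ with $q>w_0$ would be handled by the sieve weights' support: choosing the level $z\leq w_0^{1/2}$ keeps every $q\leq w_0$.

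The remaining sieve tail comes from primes of $W_1$ between $z$ and $w_1$. I would bound it via Brun--Titchmarsh by
\[
\frac{M}{\varphi(\Delta d)}\sum_{z<p\leq w_1}\frac{1}{p}\cdot\frac{\log N}{\log(M/p)}.
\]
This is where I expect the main obstacle. The bound needs to be $\ll M\exp(-(\log N)^{1/12}/1000)$, and a naive tail of size $\log(\log w_1/\log z)$ is too big. My first attempt would be to show that the tail contributions of $\Lambda$ and $\widetilde{\Lambda}_{W_0}$ cancel: in both, the divisibility $p\mid m$ for large $p$ costs the same factor $1/(p-1)$, by the equidistribution estimate above applied with modulus $\Delta dp$, where Lemma~\ref{le:BV} allows $q$ up to $w$ after summing over such $p$. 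I would take $w=w_1$ there and use that the Siegel zeros at levels $w_0$ and $w_1$ coincide when both exist.
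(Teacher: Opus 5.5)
There is a genuine gap, and it comes from reversing the sizes of the two parameters. You assert that $w_1$ is much larger than $w_0$. In fact $w_0=\exp((\log N)^{1/3})$ and $w_1=\exp((\log N)^{1/4})$, so $w_1<w_0^{1/2}$. Hence every prime $p\leq w_1$ is seen by $\Lambda_{W_0}$. The intervals $(w_0^{1/2},w_1]$ and $(w_0,w_1]$ contain no primes, and neither does the tail $(z,w_1]$ for $z=w_0^{1/2}$, yet your ``hard part'', density mismatch and Brun--Titchmarsh tail all live there. Even granting your premise, the mismatch factor $\prod(1-\frac{1}{p-1})$ is miscomputed. The model also loses a factor $1-\frac1p$ from the condition $p\mid m$, so the true ratio would be $\prod(1-(p-1)^{-2})=1+O(w_0^{-1})$, which is harmless; there is no reason to doubt the claim as stated.

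As a result the proposal never reaches an argument; it ends with a tentative plan. In that plan, applying Lemma~\ref{le:BV} with $w=w_1$ would compare $\Lambda$ with the level-$w_1$ model $\widetilde{\Lambda}_{W_1}$, not with $\widetilde{\Lambda}_{W_0}$. You also never say how a truncated sieve expansion of $1_{(m,W_1)=1}$ is to be paired with the \emph{signed} weight $\Lambda-\widetilde{\Lambda}_{W_0}$. Upper and lower sieve weights only sandwich nonnegative sums.

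What is missing is to use exactly the room between $w_1$ and $w_0$. The paper's proof runs as follows.
\begin{enumerate}
\item Write $1_{(m,W_1)=1}=\frac{\varphi(W_1)}{W_1}\Lambda_{W_1}(m)$. Replace $\Lambda_{W_1}$ by the truncated M\"obius sum $\Lambda_{W_1}^{(I)}$ with $r\leq w_1^{s_1}$, where $s_1=\frac1{10}(\log N)^{1/12}$.
\item Lemma~\ref{le:Lambdareplace} bounds this replacement error in $L^1$ by $\ll Me^{-s_1/12}$. So the trivial pointwise bounds $\Lambda(\psi_{-1}(dm))\ll\log N$ and $\widetilde{\Lambda}_{W_0}\ll\log w_0$ suffice, and the sign issue never arises.
\item Each resulting sum runs over $n\leq\Delta dM+1$ with $n\equiv 1\pmod{\Delta dr}$, and $\Delta dr\leq\Delta\mathcal{D}w_1^{s_1}\leq w_0$. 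The Siegel-corrected prime number theorem at level $w_0$ (\cite[Proposition~4.5]{MTW}) then gives the same sum for $\Lambda$ and $\widetilde{\Lambda}_{W_0}$, up to $M\exp(-\tilde c(\log N)^{1/2})$. The computation \eqref{eq:Lambda-conclusion}, \eqref{eq:model-untwisted-principal}, \eqref{eq:model-twisted-principal} with $w=w_0$ that you point to would serve the same purpose.
\item Sum over the at most $w_1^{s_1}=\exp(\frac1{10}(\log N)^{1/3})$ values of $r$, then undo the truncation for $\widetilde{\Lambda}_{W_0}$.
\end{enumerate}
The exponent $1/12$ reflects $\log w_0/\log w_1=(\log N)^{1/12}$.

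Once the ordering is corrected, a fundamental-lemma version of your first idea can also be made to work. One takes level $z=w_0^{1/2}$ and sifts $p\leq w_1$, with error $\exp(-c\log z/\log w_1)=\exp(-c(\log N)^{1/12})$. This needs extra care with the sign and with the Siegel-twisted local densities, both of which the $L^1$ truncation avoids.
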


\begin{claim}
\label{claim2}
For all $A\geq 1$, 
\[
\left|\sum_{\substack{m\leq M\\ (m,W_1)=1}}  (\Lambda_{W_0}- \widetilde{\Lambda}_{W_0})(\psi_{-1}(dm))\right|\ll_A \frac{M}{(\log N)^A}.
\]
\end{claim}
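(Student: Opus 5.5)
We first strip off the weight. Write $n_m=\psi_{-1}(dm)=\Delta dm+1$. If there is no Siegel zero at level $w_0$ the sum is identically zero, so we assume one exists, with character $\widetilde{\chi}_0\pmod{\qsiegel_0}$. Then
\[
(\Lambda_{W_0}-\widetilde{\Lambda}_{W_0})(n_m)=\Lambda_{W_0}(n_m)\widetilde{\chi}_0(n_m)n_m^{\widetilde{\beta}_0-1}.
\]
The factor $n_m^{\widetilde\beta_0-1}$ is non-increasing in $m$ and bounded by $1$. Partial summation therefore reduces the claim to the bound
\[
\sum_{\substack{m\leq M'\\(m,W_1)=1}}\Lambda_{W_0}(\Delta dm+1)\widetilde{\chi}_0(\Delta dm+1)\ll_A \frac{M}{(\log N)^A},
\]
uniformly for $M'\leq M$.

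Next we linearise both sieve conditions into congruences. We replace $\Lambda_{W_0}$ by the type I sum $\Lambda_{W_0}^{(I)}$ from Lemma~\ref{le:Lambdareplace}, with $s\asymp(\log N)^{1/3}$ chosen so that $w_0^{10s}\leq M$. The lemma applies to the progression $n\equiv 1\pmod{\Delta d}$ and costs $\ll Me^{-s/12}$, which is acceptable. We then expand $1_{(m,W_1)=1}=\sum_{e\mid (m,W_1)}\mu(e)$, truncated by the fundamental lemma of sieve theory \cite[Fundamental Lemma~6.3]{iw-kow} at level $e\leq M^{1/10}$. The sifting error is $\ll M\exp(-(\log N)^{c})$, since $\log M/\log w_1\asymp(\log N)^{3/4}$. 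This leaves a combination, with bounded coefficients, of sums
\[
\sum_{\substack{t\leq M'/e\\ \Delta d e t+1\equiv 0\pmod r}}\widetilde{\chi}_0(\Delta d e t+1),\qquad r\mid W_0,\ r\leq w_0^s,\ e\mid W_1,\ e\leq M^{1/10}.
\]
If some prime divides $(r,\qsiegel_0)$, then $\widetilde\chi_0$ vanishes on the progression. Otherwise the condition $r\mid\Delta det+1$ fixes $t$ in one class modulo $r/(r,\Delta de)$; if that class is empty the term vanishes. So in the surviving terms $n=\Delta det+1$ runs over a single progression whose modulus $q^\ast$ divides $\Delta d e r$. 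Splitting into complete periods modulo $[q^\ast,\qsiegel_0]$, primitivity of $\widetilde{\chi}_0$ makes each complete period sum vanish unless $\qsiegel_0\mid q^\ast$. Hence every non-degenerate term is $O(\qsiegel_0 r)=M^{o(1)}$. Summed over the $M^{o(1)}$ pairs $(r,e)$, this is negligible.

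The main obstacle is the degenerate terms, where $\qsiegel_0\mid q^\ast$ and there is no cancellation. Since $(r,\qsiegel_0)=1$ and $q^\ast\mid\Delta der$, degeneracy forces $\qsiegel_0\mid \Delta d e$. Let $\qsiegel_0'=\qsiegel_0/(\qsiegel_0,\Delta\mathcal D)$. Then degeneracy forces $\qsiegel_0'\mid e$.

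\begin{itemize}
\item The $2$-part of $\qsiegel_0$ is at most $8$ and its odd part is squarefree. Hence $(\qsiegel_0,\Delta\mathcal D)\leq 8\,\textnormal{rad}(\Delta\mathcal D)\ll(\log N)^{O(1)}$, because $p_\ell\ll(\log N)\log\log N$.
\item By \eqref{eq:Siegelbound} applied at level $w_0$, $\qsiegel_0\gg_B(\log N)^{B}$ for every $B$. Therefore $\qsiegel_0'\gg_A(\log N)^{A+C}$ for any fixed $C$.
\end{itemize}

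Bounding the degenerate terms trivially gives at most
\[
M\,\frac{W_0}{\varphi(W_0)}\sum_{\substack{r\mid W_0\\ r\leq w_0^s}}\frac1r\sum_{\substack{e\mid W_1\\ \qsiegel_0'\mid e}}\frac{1}{e}\ll \frac{M(\log N)^{O(1)}}{\qsiegel_0'}\prod_{p\leq w_1}\Bigl(1+\frac1p\Bigr)\ll_A\frac{M}{(\log N)^A}.
\]
I expect this step to need the most care. The point to verify is that the sieve weights are bounded, $|\lambda_e|\leq 1$, so that the sum over $e$ is dominated by $\sum_{\qsiegel_0'\mid e\mid W_1}1/e$. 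Combining the error terms from the two replacements, the non-degenerate terms and the degenerate terms proves Claim~\ref{claim2}.
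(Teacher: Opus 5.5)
Your proposal is correct and is essentially the paper's argument. Both linearise $1_{(m,W_1)=1}$ and $\Lambda_{W_0}$ into congruences, use primitivity of $\widetilde{\chi}_0$ to kill every complete-period sum unless $\qsiegel_0$ divides the progression modulus, and note that degenerate terms force the $W_1$-divisor to absorb $\qsiegel_0/(\qsiegel_0,\Delta\mathcal{D})$, which is beaten by $\qsiegel_0\gg_A(\log N)^A$; the paper encodes this through $q_u$ and the bound $1_{q_u=1}\leq(\Delta du,\qsiegel_0)/\qsiegel_0$.

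The differences are cosmetic: you use fundamental-lemma weights instead of Lemma~\ref{le:Lambdareplace} for $(m,W_1)=1$, and you do the partial summation first. Two small points to tidy up. The sieve step is cleanest if done before replacing $\Lambda_{W_0}$ by $\Lambda_{W_0}^{(I)}$, so the summand is still $O(\log N)$ pointwise when you bound the sifting error. Also, there are up to $M^{1/10+o(1)}$ pairs $(r,e)$ rather than $M^{o(1)}$, which is still harmless.
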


Before we prove the validity of these claims, let us show how they can be used to finish the proof of \eqref{eqn_itm_last_iii}.
Both claims put together show that
\begin{align*}
\sum_{\substack{m\leq M\\ (m,W_1)=1}}  \Lambda(\psi_{-1}(dm)) &= \sum_{\substack{m\leq M\\ (m,W_1)=1}}  \Lambda_{W_0}(\psi_{-1}(dm)) +\mathrm{O}_A\left(\frac{M}{(\log N)^A}\right)
\\
&= \frac{\varphi(W_1)}{W_1} \sum_{m\leq M}  \Lambda_{W_1}(m) \Lambda_{W_0}(\Delta dm+1) +\mathrm{O}_A\left(\frac{M}{(\log N)^A}\right).
\end{align*}
We apply \cite[Proposition~6.1]{MTW} at scale $M$, with $P(y)=y(\Delta dy+1)$ and
\begin{align*}
f(p,y)=
\begin{cases}
\Lambda_p(y)\Lambda_p(\Delta dy+1),& p\leq w_1,\\
\Lambda_p(\Delta dy+1),& w_1<p\leq w_0,\\
1, &p>w_0.
\end{cases}
\end{align*}
Since $\Delta$ is even, $P$ has no fixed prime divisor. The other hypotheses of that proposition hold with $C=6$ for sufficiently large $N$, since the coefficients of $P$ are $M^{o(1)}$, and we have $w_0\leq \exp((\log M)^{1/2})$, and $|f(p,y)-1|\leq 1_{p\mid P(y)}+6/p$. This gives
\[
\begin{aligned}
\sum_{m\leq M} & \Lambda_{W_1}(m) \Lambda_{W_0}(\Delta dm+1)
\\
&=M \left(\prod_{p\leq w_1} \E_{x\in \Z/p\Z}\Lambda_p(x) \Lambda_p(\Delta dx+1)\right) \left(\prod_{w_1<p\leq w_0} \E_{x\in \Z/p\Z}\Lambda_p(\Delta dx+1) \right)
\\
&\hspace{.55\textwidth}+\mathrm{O}\left(M\exp\bigl(-\tfrac{1}{20}(\log N)^{1/2}\bigr)\right).
\end{aligned}
\]
Since the prime factors of $\Delta$ and $d$ are smaller than $w_1$, we have for all $p$ with $w_1<p\leq w_0$ that
\[
\E_{x\in \Z/p\Z}\Lambda_p(\Delta dx+1)=1.
\]
In particular,
\[
\prod_{w_1<p\leq w_0} \E_{x\in \Z/p\Z}\Lambda_p(\Delta dx+1)=1.
\]
Similarly, if $p\leq w_1$ and $p\nmid \Delta d$ then
\[
\E_{x\in \Z/p\Z}\Lambda_p(x) \Lambda_p(\Delta dx+1)=\frac{p(p-2)}{(p-1)^2}=1+\mathrm{O}\left(\frac{1}{p^2}\right),
\]
and if $p\mid \Delta d$ then
\[
\E_{x\in \Z/p\Z}\Lambda_p(x) \Lambda_p(\Delta dx+1)
=\E_{x\in \Z/p\Z}\Lambda_p(x) \frac{p}{p-1}=\frac{p}{p-1}.
\]
Therefore
\[
\prod_{p\leq w_1} \E_{x\in \Z/p\Z}\Lambda_p(x) \Lambda_p(\Delta dx+1)
\gg 1.
\]
So \eqref{eqn_itm_last_iii} follows as desired.

It remains to prove the two claims.

\begin{proof}[Proof of Claim~1]
It follows from Lemma~\ref{le:Lambdareplace} (applied with $s_1=\frac{1}{10}(\log N)^{1/12}$) that
\[
\sum_{m\leq M} |\Lambda_{W_1}(m)-\Lambda_{W_1}^{(I)}(m)|\ll M e^{-s_1/12},
\]
where $\Lambda_{W_1}^{(I)}(m)$ is defined as
\[
\Lambda_{W_1}^{(I)}(m)=
\frac{W_1}{\varphi(W_1)}\sum_{\substack{r\mid W_1 \\ r\leq w_1^{s_1}}} \mu(r) 1_{r\mid m}.
\]
Therefore
\begin{align*}
\sum_{\substack{m\leq M\\ (m,W_1)=1}}  \Lambda(\psi_{-1}(dm))   
&= \frac{\varphi(W_1)}{W_1} \sum_{m\leq M}  \Lambda_{W_1}(m)\Lambda(\psi_{-1}(dm))
\\
&=\frac{\varphi(W_1)}{W_1} \sum_{m\leq M}  \Lambda_{W_1}^{(I)}(m)\Lambda(\psi_{-1}(dm)) + \mathrm{O}\big(M \exp\bigl(-\frac{1}{1000}(\log N)^{1/12}\bigr)\big)
\\
&= \sum_{\substack{r\mid W_1 \\ r\leq w_1^{s_1}}} \mu(r) \left(\sum_{m\leq M}  1_{r\mid m} \Lambda(\psi_{-1}(dm))\right) + \mathrm{O}\big(M\exp\bigl(-\frac{1}{1000}(\log N)^{1/12}\bigr)\big)
\\
&= \sum_{\substack{r\mid W_1 \\ r\leq w_1^{s_1}}} \mu(r) \left(\sum_{\substack{m\leq \Delta d M+1 \\ m\equiv 1\bmod \Delta dr}}  \Lambda(m)\right) + \mathrm{O}\big(M\exp\bigl(-\frac{1}{1000}(\log N)^{1/12}\bigr)\big).
\end{align*}
Since for any $r\leq w_1^{s_1}$ we have $\Delta d r \leq w_0$, it follows directly from \cite[Proposition 4.5]{MTW} that
\[
\left|\sum_{\substack{1<m\leq \Delta d M+1 \\ m\equiv 1\bmod \Delta dr}}  (\Lambda-\widetilde{\Lambda}_{W_0})(m)\right|\ll M  \exp\bigl(-\tilde{c}(\log N)^{1/2}\bigr)
\]
for some absolute constant $\tilde{c}\in (0,1)$, which yields
\begin{align*}
\sum_{\substack{m\leq M\\ (m,W_1)=1}}  \Lambda(\psi_{-1}(dm))   
= \sum_{\substack{r\mid W_1 \\ r\leq w_1^{s_1}}} & \mu(r) \left(\sum_{\substack{1<m\leq \Delta d M+1 \\ m\equiv 1\bmod \Delta dr}}  \widetilde{\Lambda}_{W_0}(m)\right)+\mathcal{E},
\end{align*}
where $\mathcal{E}$ is an error term satisfying
\begin{align*}
|\mathcal{E}|\ll 
M w_1^{s_1}\exp\bigl(-\tilde{c}(\log N)^{1/2}\bigr)+M\exp\bigl(-\tfrac{1}{1000}(\log N)^{1/12}\bigr)\ll M\exp\bigl(-\tfrac{1}{1000}(\log N)^{1/12}\bigr).    
\end{align*}
Now reversing the Type-I approximation of $\Lambda_{W_1}$, we have
\[
\sum_{\substack{r\mid W_1 \\ r\leq w_1^{s_1}}} \mu(r) \left(\sum_{\substack{1<m\leq \Delta d M+1 \\ m\equiv 1\bmod \Delta dr}}  \widetilde{\Lambda}_{W_0}(m)\right)
=\sum_{\substack{m\leq M\\ (m,W_1)=1}}  \widetilde{\Lambda}_{W_0}(\psi_{-1}(dm)) + \mathrm{O}\big(M\exp\bigl(-\frac{1}{1000}(\log N)^{1/12}\bigr)\big),
\]
finishing the proof of \textit{Claim~1}.
\renewcommand{\qedsymbol}{\ensuremath{\blacksquare}}
\end{proof}

\begin{proof}[Proof of Claim~2] 
If there is no Siegel zero at level $w_0$ then $\Lambda_{W_0}(m)-\widetilde{\Lambda}_{W_0}(m)=0$ and there is nothing to prove. So assume that there is such a zero. Then
\[
\Lambda_{W_0}(m)-\widetilde{\Lambda}_{W_0}(m)=
m^{\widetilde{\beta}_0-1}\widetilde{\chi}_0(m)\Lambda_{W_0}(m)
\]
where $\widetilde{\beta}_0$ denotes this Siegel zero at level $w_0$ and $\widetilde{\chi}_0 \bmod \qsiegel_0$ the corresponding Siegel character and modulus.
Note that in this case we have
\begin{equation}
\label{eqn_q0_bound}
\qsiegel_0\gg_A (\log N)^A    
\end{equation}
by \eqref{eq:Siegelbound}.

Let $s_1=\frac{1}{10}(\log N)^{1/12}$. Arguing as in the proof of \textit{Claim~1}, we have
\begin{align*}
\sum_{\substack{m\leq M\\ (m,W_1)=1}} & (\Lambda_{W_0}-\widetilde{\Lambda}_{W_0})(\psi_{-1}(dm)) 
\\
&=
\sum_{\substack{r\mid W_1 \\ r\leq w_1^{s_1}}}  \mu(r)  \sum_{m'\leq M/r}  (\Lambda_{W_0}-\widetilde{\Lambda}_{W_0})(\psi_{-1}(drm'))+\mathrm{O}\big(M\exp\bigl(-\tfrac{1}{1000}(\log N)^{1/12}\bigr)\big)
\\
&=\sum_{\substack{r\mid W_1 \\ r\leq w_1^{s_1}}}  \mu(r) \sum_{m'\leq M/r} (\psi_{-1}(drm'))^{\widetilde{\beta}_0-1} \widetilde{\chi}_0(\psi_{-1}(drm')) \Lambda_{W_0}(\psi_{-1}(drm'))
\\
&\hspace{0.5\textwidth}+\mathrm{O}\big(M\exp\bigl(-\tfrac{1}{1000}(\log N)^{1/12}\bigr)\big).
\end{align*}
Write $\Sigma_4$ for the double sum after the last equality.

Taking $s_0=1000(\log N)^{1/3}$ and
\[
\Lambda_{W_0}^{(I)}(m)=
\frac{W_0}{\varphi(W_0)}\sum_{\substack{t\mid W_0 \\ t\leq w_0^{s_0}}} \mu(t) 1_{t\mid m},
\]
it follows from Lemma~\ref{le:Lambdareplace} that
\[
\begin{aligned}
\Sigma_4=\frac{W_0}{\varphi(W_0)}\sum_{\substack{r\mid W_1 \\ r\leq w_1^{s_1}}}  \sum_{\substack{t\mid W_0\\ t\leq w_0^{s_0}}} \mu(r) \mu(t)
\sum_{m\leq M/r} (\psi_{-1}(drm))^{\widetilde{\beta}_0-1} \widetilde{\chi}_0(\psi_{-1}(drm)) 1_{t\mid \psi_{-1}(drm)}&
\\
+\mathrm{O}\big(M \exp\bigl(-(\log N)^{1/3} & \bigr)\big).
\end{aligned}
\]
Let $\Sigma_5$ denote the triple sum above.

If $(t,\Delta d r)>1$ then $t\mid \psi_{-1}(drm)$ is impossible. Hence, we can restrict to those $t$ for which $(t,\Delta d r)=1$. Moreover, if $(t,\qsiegel_0)>1$ and $t\mid\psi_{-1}(drm)$, then $\widetilde{\chi}_0(\psi_{-1}(drm))=0$. Hence these terms vanish, and we may further restrict to $(t,\qsiegel_0)=1$.
This leaves us with
\begin{align*}
\Sigma_5&=\sum_{\substack{r\mid W_1 \\ r\leq w_1^{s_1}}}  \sum_{\substack{t\mid W_0\\ t\leq w_0^{s_0}\\ (t,\qsiegel_0\Delta dr)=1}} \mu(r) \mu(t)
\sum_{m\leq M/r} (\psi_{-1}(drm))^{\widetilde{\beta}_0-1} \widetilde{\chi}_0(\psi_{-1}(drm)) 1_{t\mid \psi_{-1}(drm)}.
\end{align*}
For each admissible pair $(r,t)$, let $a_{r,t}\in\{1,\ldots,t\}$ be the unique integer such that
\[
\Delta dr a_{r,t}+1\equiv 0 \pmod t.
\]
Then $t\mid\psi_{-1}(drm)$ if and only if $m\equiv a_{r,t}\pmod t$. Hence
\begin{align*}
\Sigma_5&=\sum_{\substack{r\mid W_1 \\ r\leq w_1^{s_1}}}  \sum_{\substack{t\mid W_0\\ t\leq w_0^{s_0}\\ (t,\qsiegel_0\Delta dr)=1}} \hspace{-.5em}\mu(r) \mu(t)
\sum_{0\leq m\leq M/rt- a_{r,t}/t}\hspace{-.2em}(\psi_{-1}(drtm+dra_{r,t}))^{\widetilde{\beta}_0-1} \widetilde{\chi}_0(\psi_{-1}(drtm+dra_{r,t})).
\end{align*}
By the triangle inequality, we obtain
\begin{align*}
|\Sigma_5|&\leq \sum_{\substack{r\mid W_1\\ r\leq w_1^{s_1} }}  \sum_{\substack{t\mid W_0\\ t\leq w_0^{s_0}\\ (t,\qsiegel_0\Delta dr)=1}} \max_{M'\leq M/rt}
\left|\sum_{0\leq m\leq M'} \widetilde{\chi}_0(\psi_{-1}(drtm+dra_{r,t}))\right|,
\end{align*}
where the last inequality follows from summation by parts.
Letting $W_1'=(\qsiegel_0, W_1)$ and $W_1''=W_1/W_1'$, we have
\[
\begin{aligned}
|\Sigma_5|&\leq
\sum_{u\mid W_1'}~
\sum_{\substack{r\mid W_1''\\ ur\leq w_1^{s_1}}}\sum_{\substack{t\mid W_0\\ t\leq w_0^{s_0}\\ (t,\qsiegel_0\Delta dr)=1}} \max_{M'\leq M/urt}
\left|\sum_{0\leq m\leq M'} \widetilde{\chi}_0(\psi_{-1}(durtm+dura_{ur,t}))\right|
\\
&=
\sum_{u\mid W_1'}~
\sum_{\substack{r\mid W_1''\\ ur\leq w_1^{s_1}}}\sum_{\substack{t\mid W_0\\ t\leq w_0^{s_0}\\ (t,\qsiegel_0\Delta dr)=1}} \max_{M'\leq M/urt}
\left|\sum_{0\leq m\leq M'} \widetilde{\chi}_0(\Delta durtm+b_{u,r,t})\right|,
\end{aligned}
\]
where $b_{u,r,t}=\Delta dura_{ur,t}+1$.

Define
\[
q_u=\frac{\qsiegel_0}{(\Delta d u,\qsiegel_0)}.
\]
The function
\[
m\mapsto\widetilde\chi_0(\Delta durtm+b_{u,r,t})
\]
is $q_u$-periodic. Since $(rt,\qsiegel_0)=1$, we have
\[
(\Delta durt,\qsiegel_0)=(\Delta du,\qsiegel_0).
\]
The odd part of $\qsiegel_0$ is squarefree and its $2$-part divides $\Delta$, so
\[
\big((\Delta du,\qsiegel_0),q_u\big)=1.
\]
Moreover, $b_{u,r,t}\equiv1\pmod{(\Delta du,\qsiegel_0)}$. The Chinese remainder theorem and the primitivity of $\widetilde\chi_0$ therefore give
\[
\sum_{0\leq m<q_u} \widetilde\chi_0(\Delta durtm+b_{u,r,t})=1_{q_u=1}.
\]

Splitting an initial interval into complete periods and a remainder of length less than $q_u$, we obtain
\[
\max_{M'\leq M/urt}\left|\sum_{0\leq m\leq M'}\widetilde\chi_0(\Delta durtm+b_{u,r,t})\right|\ll \frac{M}{urt}1_{q_u=1}+q_u.
\]
Since
\[
1_{q_u=1}\leq\frac{(\Delta du,\qsiegel_0)}{\qsiegel_0}\qquad\text{and}\qquad(\Delta du,\qsiegel_0)\mid\Delta\textnormal{rad}(d)u,
\]
this is
\[
\ll \frac{M\textnormal{rad}(d)}{rt\qsiegel_0}+q_u.
\]

The contribution of the first term is
\[
\ll
\frac{M\textnormal{rad}(d)}{\qsiegel_0}\sum_{u\mid W_1'}1\sum_{r\mid W_1''}\frac{1}{r}\sum_{t\mid W_0}\frac{1}{t}
\ll
\frac{M\textnormal{rad}(d)\tau(\qsiegel_0) \log(2w_1)\log(2w_0)}{\qsiegel_0}.
\]
Since $q_u\leq\qsiegel_0$, the contribution of the second term is
\[
\ll \qsiegel_0\tau(\qsiegel_0)w_1^{s_1}w_0^{s_0}.
\]
Now $\textnormal{rad}(d)\ll(\log N)^{O_k(1)}$,
$\qsiegel_0\leq w_0$, and
\[
w_1^{s_1}w_0^{s_0}=\exp(O((\log N)^{2/3})).
\]
Consequently, \eqref{eqn_q0_bound} gives
\[
|\Sigma_5|\ll_A\frac{M}{(\log N)^A}.
\]
Since $W_0/\varphi(W_0)\ll\log w_0$, Claim~2 follows.
\renewcommand{\qedsymbol}{\ensuremath{\blacksquare}}
\end{proof}
\end{proof}

\begin{proof}[Proof of Proposition~\ref{prop:mainfour}]
All the summands in $S_{\mathrm{main}}^{(3)}$ are non-negative. Restricting to $d$ with $\textnormal{rad}(\mathcal D)\mid d$ and
using parts~\ref{itm_last_i} and~\ref{itm_last_iii} of Proposition~\ref{prop_last} gives
\[
S_{\mathrm{main}}^{(3)}\gg D^\ell N\frac{\varphi(W_1)M}{W_1}.
\]
Since $D^\ell\asymp(D+1)^\ell=\tau(\mathcal D)$, the result follows.
\end{proof}

\bibliography{refs}
\bibliographystyle{plain}

\end{document}